\newtheorem  {theorem}       {Theorem}[section]
\newtheorem  {lemma}      [theorem]        {Lemma}
\newtheorem  {corollary}  [theorem]    {Corollary}
\newtheorem  {proposition}[theorem]   {Proposition}
\newtheorem  {definition} [theorem]   {Definition}
\newtheorem  {assumption} [theorem]   {Assumption}
\newtheorem* {theorem*}      {Theorem}
\newtheorem* {lemma*}        {Lemma}
\newtheorem* {corollary*}    {Corollary}
\newtheorem* {proposition*}  {Proposition}
\newtheorem* {definition*}   {Definition}
\newtheorem* {remark*}       {Remark}
\newtheorem  {remark}     [theorem]   {Remark}
\newtheorem* {remarks*}      {Remarks}
\newtheorem* {claim*}        {Claim}
\newcommand{\N} {\mathbb N}
\renewcommand{\d} {\,d}  
\newcommand{\Gage}{\ensuremath{G^{\mathrm{age}} }}
\newcommand{\G}{\ensuremath{\mathcal{G}}}
\newcommand{\E}[1]{\ensuremath{\mathbb{E} \left[#1 \right]}}
\newcommand{\Prob}[1]{\ensuremath{\mathbb{P} \left(#1 \right)}}
\begin{document}

\title{Age evolution in the mean field forest fire model via multitype branching processes}

\author {Edward Crane \thanks{Heilbronn Institute for Mathematical Research and School of Mathematics, University of Bristol. \texttt{Edward.Crane@bristol.ac.uk}}\and Bal\'azs R\'ath \thanks{MTA-BME Stochastics Research Group, Budapest University of Technology and Economics. \texttt{rathb@math.bme.hu}}\and Dominic Yeo \thanks{
Department of Statistics, University of Oxford.
\texttt{dominic.yeo@stats.ox.ac.uk}}}

\maketitle

\begin{abstract}
We study the distribution of ages in the mean field forest fire model introduced by R\'ath and T\'oth. This model is an evolving random graph whose dynamics combine Erd\H{o}s--R\'enyi edge-addition with a Poisson rain of \emph{lightning strikes}. All edges in a connected component are deleted when any of its vertices is struck by lightning.
We consider the asymptotic regime of lightning rates for which the model displays self-organized criticality. The \emph{age} of a vertex increases at unit rate, but it is reset to zero at each burning time. We show that the empirical age distribution converges as a process to a deterministic solution of an autonomous measure-valued differential equation.  The main technique is to observe that, conditioned on the vertex ages, the graph is an inhomogeneous random graph in the sense of Bollob\'as, Janson and Riordan. We then study the evolution of the ages via the multitype Galton--Watson trees that arise as the limit in law of the component of an identified vertex at any fixed time. These trees are critical from the gelation time onwards.
\end{abstract}

\medskip

\noindent \textsc{Keywords:} inhomogeneous random graph, multitype branching process,
self-organized criticality, Perron--Frobenius theory, differential equations \\
\textsc{AMS MSC 2010:} 05C80, 60J80, 46N55, 35Q82

\newpage
\setcounter{tocdepth}{2}
\tableofcontents

\newpage

\section{Introduction}\label{section_intro}

We study a stochastic model where a net\-work grows steadily, but is subject to occasional destructive events. These can spread widely; but by damaging the connectivity, each destructive event makes it harder for future destruction to propagate through the network. As motivation, consider the effect of fires on a dense forest. Given the right conditions, a fire can destroy all the trees in a region; but afterwards, future fires cannot pass through this area until some trees have regrown.

A probabilistic model of a complex interacting system is considered to be particularly interesting if the effect sizes or the correlations between regions follow a power-law decay. These so-called \emph{critical} phenomena are observed in many complex real-world networks. Seminal work of Bak, Tang and Wiesenfeld \cite{BakTangWiesenfeld} considers models where from a broad range of initial conditions, the dynamics move the system into a class of states where critical phenomena are observed, and then maintain it there. These authors describe this property as \emph{self-organized criticality (SOC)}, and in recent years a wide range of mathematical  models across many contexts have been shown to exhibit such behaviour, see e.g.\ \cite[\S 3]{Jarai} on the SOC of the Abelian sandpile model and \cite{cerf_gorny} for the Curie--Weiss model of SOC.

The \emph{forest fire model} that we study in this paper is a random process taking values in subgraphs of the complete graph $K_n$. The lattice setting, introduced by Drossel and Schwabl \cite{DrosselSchwabl}, where geometry plays a more central role has also been studied: the subcritical forest fire model on $\mathbb{Z}^d$ is constructed by D\"urre \cite{Durre1,Durre2,Durre3} and the critical model on the half-plane is constructed by Graf \cite{Graf}.
However, the rigorous construction of a self-organized critical forest fire model on $\mathbb{Z}^d$ poses
a real mathematical challenge: in fact Kiss, Manolescu and Sidoravicius show in \cite{KMS} (using the earlier results of van den Berg and Brouwer \cite{vdBB}) that it is impossible to construct such a model on $\mathbb{Z}^2$ by starting with standard dynamical percolation and requiring edges to be burned as soon as they belong to an infinite cluster.

 Our focus is the \emph{mean field} setting where edges may appear between any pair of vertices. Without the destructive dynamics of fires, adding edges uniformly at random defines the famous \emph{random graph process} of Erd\H{o}s and R\'enyi \cite{ER}. This process experiences a phase transition between a subcritical regime, where the largest components have logarithmic size relative to the size of the graph, and a supercritical regime, where for large graphs an asymptotically strictly positive proportion of the vertices form a unique \emph{giant component}. During the asymptotically instantaneous transition, the graphs display various \emph{critical} properties, notably a power-law decay in component sizes, which matches behaviour that is observed in many real-world networks not just at phase transition times but at all times.

The \emph{mean field forest fire (MFFF)} process was introduced in \cite{RathToth}. It can be viewed as an adjustment to the Erd\H{o}s--R\'enyi dynamics, which destroys the edges of potential giant components as they are forming, and thus maintains the system in a critical state forever. The following informal definition will be made precise in \S~\ref{SS: MFFF formal}.
\begin{itemize}
\item The model has $n$ vertices, with some (possibly random) initial set of undirected edges at time $0$.
\item Each possible edge joining two vertices appears at rate $1/n$, independently.
\item At rate $\lambda = \lambda(n)$ each vertex is struck by lightning, independently. When a vertex is struck by lightning, the vertex survives but all of the edges in its connected component (or cluster) are instantaneously deleted. Those edges may subsequently reappear.
\end{itemize}
If a lightning hits a vertex $v$ at time $t$, we say that $t$ is a burning time of all the vertices in the connected cluster of $v$.

The most interesting asymptotic regime for the lightning rate is $1/n \ll \lambda(n) \ll 1$. In this regime, clusters of any fixed finite size are destroyed at a negligible rate when $n$ is large. However, the total rate of lightning strikes in the model diverges, so if a cluster of size comparable to $n$ were able to form then it would only survive for time $o(1)$. In \cite{RathToth} it is shown that in this regime the model displays SOC. Subject to some assumptions on the initial conditions, the limiting cluster size distribution in the $n \to \infty$ limit is deterministic and satisfies a coupled system of differential equations called the \emph{critical forest fire equations}; see Propositions \ref{prop_Smol_uniqueness_simplified} and \ref{prop_RathToth_conv} below. The limiting cluster size distribution stays subcritical until a certain \emph{gelation time}. At the gelation time and afterwards, the limiting cluster size distribution is critical in the sense that it has a polynomially decaying tail. No giant component forms; in fact the model is \emph{conservative}, meaning very roughly that at any time $t$, nearly all of the vertices are contained in small connected components.

\medskip

The goal of this paper is to describe the local graph structure of the MFFF at time $t$ as $n \to \infty$ in terms of a multitype branching process, and also to give a simple description of the time evolution of the parameters that govern this multitype branching processes.

The simplest initial condition for the MFFF is the \emph{monodisperse} state, where at time $0$ there are no edges, so the graph consists of $n$ isolated vertices. In this paper we will consider some more general initial conditions, but to state our results informally in this introduction \emph{we will discuss only the monodisperse initial condition}.
At any time $t \ge 0$ each vertex $v$ has an \emph{age} $a^n_t(v)$, which is defined to be the time since it was last burned, or $t$ if it has not yet been burned. Let $\pi^n_t=\frac{1}{n} \sum_{v} \delta_{a^n_t(v)}$ be the empirical measure of these ages.

Our central observation, stated formally as Theorem~\ref{thm_mfff_inhomog}, is that conditional on the ages $a^n_t(v)$ and $a^n_t(w)$ of two vertices $v$ and $w$, the probability that they are joined by an edge at time $t$ is exactly $1-\exp(-a^n_t(v)\wedge a^n_t(w) /n)$. Furthermore, these events are independent for distinct pairs of vertices. So conditional on $\pi^n_t$, the graph seen at time $t$ is an inhomogeneous random graph \emph{(IRG)} in the sense of Bollob\'as, Janson and Riordan \cite{BollobasJansonRiordan}.

Our first main result, Theorem \ref{thm_convergence}, shows that
the empirical age distributions $\pi^n_t$ converge as $n\rightarrow\infty$ to a deterministic limit distribution $\pi_t$. In the course of the proof, we identify $\pi_t$ explicitly as the marginal distribution of $a_t$ in a Markov process $(C_t, a_t)_{t \geq 0}$ that takes values in $E \times [0,\infty)$, where $E$ is the set $\{1,2,3, \dots\}$ equipped with the one-point compactification topology, with $1$ identified with the point at $\infty$. This topology makes the process almost surely continuous from the left at the random times when $C_t$ explodes. The age $a_t$ is simply the time that has elapsed since the last explosion at time $t$, or $a_t=t$ if there is no explosion in $[0,t]$. (See Section \ref{section_cluster_growth_age} for the definitions in the case of general initial conditions.)

$(C_t,a_t)$ is a \emph{McKean--Vlasov process} with jumps, meaning that it is a Markovian Feller process whose infinitesimal generator at time $t$ is defined in terms of the distribution of $(C_t, a_t)$. We show that $(C_t,a_t)$ is the distributional limit of the process $(C^n_t(\rho^n), a^n_t(\rho^n))$, where $\rho^n$ is a uniformly chosen vertex of the graph and $C^n_t(\rho^n)$ is the size of the cluster of $\rho^n$ at time $t$.  The marginal $(C_t)_{t \geq 0}$ is the \emph{cluster growth process} $(C_t)_{t \geq 0}$ introduced in \cite{CraneFreemanToth}, which is a McKean--Vlasov process on its own; it is an explosive pure jump process  with explosion times $\tau_\ell, \ell \in \mathbb{N}$ which returns to $C_{\tau_\ell}=1$ at each explosion time $\tau_\ell$. The jump rate of $C_t$ is $k$ when $C_t=k$ and the jump distribution for jumps at time $t$ is the law of $C_t$ itself.

Remarkably, $(a_t)_{t \geq 0}$ on its own is also a McKean--Vlasov process. It increases at rate $1$ except at a random discrete set of jump times when it jumps down to $0$. We show that the jump rate of $a_t$ at each time $t$ is a function of $a_t$ and the distribution $\pi_t$ of $a_t$. This leads to our second main result, Theorem \ref{thm_age_pde}, which is that $(\pi_t,\,t\ge 0)$ satisfies an autonomous differential equation that we call the \emph{age differential equation}, see \eqref{eq:tempdefnmu}, \eqref{eq:ageDE_intro} and \eqref{eq:phiformula_intro} below. The well-posedness of this differential equation is proved in the companion paper~\cite{Crane2}, so in fact $\pi_t$ is uniquely determined from $\pi_0$ by the differential equation. However, we stress that in this paper we determine $\pi_t$ from $\pi_0$ using the cluster growth process.

Recall from \cite{RathToth} that there exists a so-called \emph{gelation time} $t_{\mathrm{gel}} \ge 0$, at which the model makes a phase transition from subcritical to critical behaviour.
For $0 \le t < t_{\mathrm{gel}}$ the age of each vertex simply increases at rate $1$ unless it burns before $t_{\mathrm{gel}}$. However, only an asymptotically negligible proportion of the vertices burn before $t_{\mathrm{gel}}$, so the limiting age distribution satisfies the simple transport equation
\begin{equation}\label{eq:ageDEprecrit}
\frac{\mathrm{d}\pi_t}{\mathrm{d}t} = -\delta_0' * \pi_t\,.
\end{equation}
Here $\delta_0'$ is the derivative of the Dirac delta at $0$, so this statement is an equality of Schwarz distributions. In other words, for $0 \le t \le t_{\mathrm{gel}}$ and for any Borel set $A \subseteq [0,\infty)$,
$$ \pi_t(A) = \pi_0(\{ x - t: x \in A \})\,. $$
The situation is more interesting for $t\ge t_{\mathrm{gel}}$, when the model is critical. Then, for each such $t$, there exists a unique non-negative, continuous and non-decreasing function $s \mapsto \theta_t(s)$ satisfying $\int \theta_t(s) \,d\pi_t(s)  = 1$ and
\begin{equation}\label{eq:tempdefnmu}
\theta_t(s) = \int_0^\infty \theta_t(u)\left(u\wedge s\right) \mathrm{d}\pi_t(u),\;\;\; s\in[0,\infty].
\end{equation}
For a fixed $t \geq t_{\mathrm{gel}}$,  we denote by $\theta_t \pi_t$ the probability measure absolutely continuous with respect to $\pi_t$ with Radon--Nikodym derivative $\theta_t$. We denote by $\varphi(t)$ the limiting total rate of mass of burnt vertices at time $t$ (see Proposition \ref{prop_Smol_uniqueness_simplified} for details). In fact, $\varphi(t)$ is also equal to the explosion rate of $C_t$.
Then for $t > t_{\mathrm{gel}}$, $\pi_t$ satisfies the following distribution-valued differential equation:
\begin{equation}\label{eq:ageDE_intro}
\frac{\mathrm{d}\pi_t}{\mathrm{d}t} =
- \delta_0' * \pi_t - \varphi(t)\theta_t \pi_t +\varphi(t)\delta_0\,.
\end{equation}
We prove \eqref{eq:ageDE_intro} by identifying it with the Kolmogorov forward equation of the McKean--Vlasov process $(a_t)_{t \geq 0}$.
Let us give interpretations of the individual terms.   As in \eqref{eq:ageDEprecrit}, the transport term $-\delta_0'*\pi_t$ describes the constant, deterministic growth of the ages of all vertices not instantaneously involved in fires. The term $-\varphi(t) \theta_t \pi_t$ describes the change due to the removal of burning vertices. The final term $\varphi(t)\delta_0$ corresponds to the fact that all vertices burned at time $t$ reappear with age zero.  We will show that criticality of the forest fire equations corresponds to criticality of the IRG which describes the system conditional on the ages.
The local structure of the IRG seen at time $t$ is well-approximated by a multitype Poisson branching process (see Definition \ref{def:branchingprocess} and Theorem \ref{thm_local_limit_mfff}\eqref{local_weak_limit_in_ffm}), and $\theta_t$ is the right eigenfunction corresponding to the principal eigenvalue $\lambda_t=1$ of the branching operator. The heuristic idea is that $\theta_t \pi_t$ approximates the distribution of ages in very large components of the IRG, which account for nearly all of the burning vertices (see Section \ref{subsubsection_frozen_IRG_Dominic} for details).  In fact we do not prove this global statement but instead obtain~\eqref{eq:ageDE_intro} by considering the local limit, showing that the rate at which $a_t$ jumps down to $0$ when $a_t = s$ is $\varphi(t)\theta_t(s)$. By careful analysis of the multitype branching process we show
\begin{equation}\label{eq:phiformula_intro}
\varphi(t) = \left(\int \theta_t(s)^3\,\mathrm{d}\pi_t(s)\right)^{-1}\,.
\end{equation}
Combining~\eqref{eq:tempdefnmu},~\eqref{eq:ageDE_intro} and~\eqref{eq:phiformula_intro}, we have an autonomous differential equation, describing the evolution of $\pi_t$  in terms of $\pi_t$, without reference to $t$.


\section{Statements of results}
\label{section_statements_of_results}
We precisely introduce the mean field forest fire model in \S \ref{SS: MFFF formal}, age-driven inhomogeneous random graphs in \S \ref{subsection_intro_age_driven_IRG}, age-driven multitype branching processes in \S \ref{SS: age-driven bps}, and the critical forest fire equations in \S \ref{subsection_intro_forest_fire_eqs}. These provide the necessary background to understand the statements of our main results, stated in \S \ref{subsection_intro_main_results}.
In \S\ref{subsub_other_work} we set our results in the context of related literature. In \S\ref{subsection_open_q} we pose some open questions.
 In \S \ref{subsection_overview} we give an overview of the contents of the rest of the paper.

\subsection{The mean field forest fire}
\label{SS: MFFF formal}

We will always use the following definition of a mean field forest fire process on vertex set $[n]:=\{1,\ldots,n\}$, with lightning rate $\lambda$, following \cite{RathToth}. We refer to this model as $\mathrm{MFFF}(n,\lambda)$.

\begin{definition}[Graphical construction of $\mathrm{MFFF}(n,\lambda)$]
\label{def:graphical_mfff}
Let $\mathcal{E}$ be a Poisson point process (PPP) of rate $1/n$ on $\binom{[n]}{2} \times [0,\infty)$, and $\Lambda$ be an independent PPP of rate $\lambda$ on $[n] \times [0,\infty)$. These PPPs will determine edge arrivals and lightning strikes, respectively, at times given by their second coordinates. Given some (possibly random) initial graph $\mathcal{G}^n_0$ with vertex set $[n]$, we construct the random graph-valued process $\left( \mathcal{G}^n_t \right)_{t=0}^\infty$ started from $\mathcal{G}^n_0$ as follows, working through the points of $\mathcal{E} \cup \Lambda$ in increasing order of their time coordinates:
\begin{enumerate}[(i)]
 \item add the edge $\{i,j\}$ to obtain $\mathcal{G}^n_{t}$ from $\mathcal{G}^n_{t-}$ if $\{i,j\} \times t$ is a point of $\mathcal{E}$, and edge $\{i,j\}$ is not already present in $\mathcal{G}^n_{t-}$ (and otherwise do nothing),
 \item erase the edges of the connected component $\mathcal{C}^n_{t-}(i)$ of vertex $i$ in $\mathcal{G}^n_{t-}$ to obtain $\mathcal{G}^n_{t}$ from $\mathcal{G}^n_{t-}$ if $\{i\} \times t$ is a point of $\Lambda$.
 \end{enumerate}
In the latter case, we say that the vertices of $\mathcal{C}^n_{t-}(i)$ are burned at time $t$.
\begin{remark}\label{lightningremark}
We will always view $\mathrm{MFFF}(n,\lambda)$ as a graph-valued process coupled with its lightning process $\Lambda$. This will simplify the choice of probability space in subsequent arguments.
\end{remark}
\end{definition}

In \cite{RathToth} the asymptotic behaviour of MFFF processes is studied as $n\rightarrow\infty$, where the lightning rate $\lambda(n)$ satisfies the \emph{critical relation}
\begin{equation}\label{lightning_critical_regime}
1/n \ll \lambda(n)\ll 1.
\end{equation}
 We will assume throughout that this critical relation holds. Informally, this has the effect that small components are negligibly affected by lightning, whereas components of size $\Theta(n)$ are burned into singletons instantly. Roughly speaking, this means that such giant components never appear.

Let us stress that the earlier results of \cite{RathToth} and \cite{CraneFreemanToth} concerning the MFFF (which will be recalled in \S \ref{subsection_intro_forest_fire_eqs} and \S \ref{subsection_cgp_w_a}, respectively)  are about the \emph{sizes} of connected clusters in the process $(\mathcal{G}^n_{t})$, but the main results of this paper are about the \emph{graph structure} itself for $(\mathcal{G}^n_{t})$.

\subsection{Age-driven inhomogeneous random graphs}
\label{subsection_intro_age_driven_IRG}

We will make a connection between the $\textup{MFFF}(n, \lambda)$ and the theory of inhomogeneous random graphs.
In order to do so, we need to study the MFFF augmented with extra information about the \emph{ages} of vertices.

\begin{definition}[Ages in the mean field forest fire model] \label{def:ages}
For $a^n_0(i) \in [0,\infty), \, i \in [n]$, we define a \emph{mean field forest fire with ages} $\mathrm{MFFFA}(n,\underline a^n_0,\lambda)$ as follows. We take $\mathrm{MFFF}(n,\lambda)$ as in Definition \ref{def:graphical_mfff}, with some initial graph $\mathcal{G}^n_0$, along with its lightning process $\Lambda$, and we call $a^n_0(i)$ the \emph{initial age} of vertex $i$.

For $t>0$, denote by $a^n_t(i)$  the \emph{age} of vertex $i$ at time $t$, i.e., $a^n_t(i)$ is $a^n_0(i)+t$ if $i$ did not burn during the time interval $[0,t]$, otherwise  $a^n_t(i)$ is $t-s$, where
$s$ is the last burning time of $i$ on the time interval $[0,t]$.
  We write $\underline{a}^n_t:=(a^n_t(i))_{i \in [n]}$ for the vector of ages, and
\begin{equation}\label{pi_n_t_empirical_measure}
\pi^n_t:= \frac{1}{n}\sum_{i=1}^n \delta_{a^n_t(i)}
\end{equation} for the empirical measure of ages.
\end{definition}

So the age of a vertex in $\mathrm{MFFFA}(n,\underline{a}^n_0,\lambda)$ increases deterministically at unit rate, but is reset to zero at each burning time.

We introduce a special case of the class of inhomogeneous random graphs introduced by Bollob\'as, Janson and Riordan \cite{BollobasJansonRiordan}.
\begin{definition}[Age-driven IRG]\label{def:agedrivenIRG}
Given a  (possibly random) sequence $$\underline{a}^n=(a^n(1),\ldots,a^n(n))\in[0,\infty)^n\,,$$ we define
 the \emph{age-driven inhomogeneous random graph (IRG)} $\Gage(n,\underline{a}^n)$ on the vertex set $[n]$ as follows.  Conditional on $\underline{a}^n$,
\begin{itemize}
\item $a^n(i)$ is said to be the \emph{age} of vertex $i\in[n]$;
\item independently for different edges, the edge $\{i,j\}\in\binom{[n]}{2}$ is present with probability $$1-\exp\left( -\frac{a^n(i)\,\wedge\, a^n(j)}{n}\right)\,.$$
\end{itemize}
\end{definition}

Our main results concern the evolution of the process of ages in the MFFF model started from an age-driven IRG. So, although Definition \ref{def:ages} makes sense in the generality stated, from now on, we will usually make the following assumption.

\begin{assumption}[MFFFA started from age-driven IRG]\label{assumption_age_driven_initial_state}

$ $

The $\mathrm{MFFFA}(n,\underline a^n_0,\lambda)$ is started from the initial graph $\mathcal G^n_0\stackrel{d}= \Gage(n,\underline a^n_0)$.
\end{assumption}

The central idea underpinning our results is that the dynamics of the MFFFA preserve the class of age-driven IRGs.

\begin{theorem}[$(\mathcal{G}^n_t)$ and $(\underline{a}^n_t)$ are intertwined]\label{thm_mfff_inhomog}
If $(\mathcal{G}^n_t)$ is a MFFFA process with initial condition distributed as $\Gage(n,\underline a^n_0)$ then, for any $t\ge 0$,
\begin{equation}\label{eq:mfff_inhom_rg}
 \text{conditional on $(\underline a^n_s,\,s\in[0,t])$, we have $\mathcal{G}^n_t\stackrel{d}=\Gage(n,\underline a^n_t)$.}
\end{equation}
\end{theorem}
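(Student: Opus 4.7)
The plan is to proceed by induction on the events of the underlying coupled process $(\mathcal{E}, \Lambda, \mathcal{G}^n_0)$, tracking the conditional distribution of $\mathcal{G}^n_t$ given the age filtration $\mathcal{A}_t := \sigma(\underline a^n_s,\,s \in [0,t])$. The base case at $t = 0$ is the stated hypothesis on the initial condition. I will then verify that the invariant ``conditional on $\mathcal{A}_t$, the law of $\mathcal{G}^n_t$ is that of $\Gage(n, \underline a^n_t)$'' is preserved (i) during time intervals with no burning, and (ii) across each instantaneous burning event.

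For (i), on a no-burning segment $(t, t']$, conditioning on the age trajectory is equivalent to conditioning on $\Lambda$ having no points in $[n] \times (t, t']$, which leaves the edge-arrival PPP $\mathcal{E}|_{(t,t']}$ untouched. Combining the inductive hypothesis with the independence of $\mathcal{E}|_{(t,t']}$ from the past, each potential edge $\{i,j\}$ is present at time $t'$ with probability $1 - e^{-(a_t^n(i) \wedge a_t^n(j))/n} \cdot e^{-(t'-t)/n} = 1 - e^{-(a_{t'}^n(i) \wedge a_{t'}^n(j))/n}$, independently across pairs, giving the desired $\Gage(n, \underline a^n_{t'})$.

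For (ii), at a burning event at time $u$, the age trajectory reveals the burning set $B \subseteq [n]$ of vertices whose ages reset to zero. The event ``burning set equals $B$'' decomposes as a lightning strike at $u$ on some $i^* \in B$ (an event on $\Lambda$, independent of the graph, with density proportional to $|B|$) and the graph-dependent condition $\mathcal{C}^n_{u^-}(i^*) = B$. After marginalising over the uniform choice of $i^* \in B$, this is equivalent to ``$B$ is a connected component of $\mathcal{G}^n_{u^-}$''. Applying Bayes' rule, the conditional law of $\mathcal{G}^n_{u^-}$ is that of $\Gage(n, \underline a^n_{u^-})$ restricted to graphs in which $B$ is a connected component. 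Since this restriction constrains only edges with at least one endpoint in $B$, by the product structure of the IRG the edges within $[n] \setminus B$ retain their original independent Bernoulli distribution. Deleting all edges within $B$ then produces $\mathcal{G}^n_u \stackrel{d}{=} \Gage(n, \underline a^n_u)$: the reset ages $a^n_u(j) = 0$ for $j \in B$ force edge probabilities of zero for every edge touching $B$, matching the fire's destruction, while edges entirely within $[n] \setminus B$ have the correct IRG probabilities from the unchanged ages.

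The hard part will be making the Bayesian computation rigorous at the instantaneous burning time $u$, since the conditioning event $\{\text{burning set equals }B\}$ has probability zero in continuous time. I expect the cleanest approach will be to work with the conditional intensity of the piecewise-deterministic Markov process $(\mathcal{G}^n_t, \underline a^n_t)$; alternatively one can discretise time into small steps and pass to a limit. A related subtlety is that the age trajectory does not reveal which $i^* \in B$ was actually struck, but because the prior on $i^*$ coming from $\Lambda$ is uniform and the event $\{\mathcal{C}^n_{u^-}(i^*) = B\}$ does not depend on the choice of $i^* \in B$ once $B$ is a connected component, this marginalisation has no effect on the conditional graph law.
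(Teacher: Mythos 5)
Your induction, the treatment of fire-free intervals, and the observation at a fire that the conditioning constrains only edges meeting the burnt set $B$ (so the product structure leaves the edges inside $[n]\setminus B$ untouched, while the reset ages $a^n_u(j)=0$, $j\in B$, encode the deleted edges) all match the paper's argument. But the one step you yourself flag as ``the hard part'' --- making the conditioning at the instantaneous burning time rigorous, since $\{\text{burning set}=B \text{ at time } u\}$ is a null event --- is exactly the technical content of the proof, and in your write-up it is only sketched via an informal Bayes/conditional-intensity computation, with two possible strategies named but neither carried out. As it stands this is a genuine gap: the heuristic is correct (the intensity of a fire with burning set $B$ given the pre-fire graph is $\lambda|B|\mathbf{1}\{B\text{ is a component}\}$, so the likelihood factor is graph-independent on that event), but turning it into a proof requires Palm/intensity calculus for the piecewise-deterministic process or a discretisation-and-limit argument, neither of which is supplied.

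The paper removes this difficulty with one structural move that you could adopt: it proves the \emph{stronger} statement that \eqref{eq:mfff_inhom_rg} holds even after further conditioning on the whole lightning process $\Lambda$ (this is why Remark \ref{lightningremark} insists on carrying $\Lambda$ along as part of the process). With $\Lambda$ fixed, the strike times $t_k$ and struck vertices $i_k$ are deterministic, so at time $t_k$ the extra information revealed by the age trajectory is precisely the positive-probability event $\{\mathcal{C}^n_{t_k-}(i_k)=B\}$ for the given $i_k$; conditioning an IRG on this event is elementary (it involves only edges meeting $B$), and your uniform marginalisation over the unknown struck vertex $i^*$ becomes unnecessary because $i^*=i_k$ is known. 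The induction then closes exactly as in your steps (i)--(ii), with no null-event conditioning anywhere, and averaging over $\Lambda$ at the end recovers the theorem, since the claimed conditional law $\Gage(n,\underline a^n_t)$ is measurable with respect to the ages alone.
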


A version of this result with weaker conditioning and with monodisperse initial condition appears in \cite{Yeothesis} as Lemma 5.8. We will give a shorter proof here in \S\ref{subsection_ff_preserves_IRG}.

\begin{remark}\label{remark_MFFF_intertwined}
It follows from \eqref{eq:mfff_inhom_rg} that $(\underline{a}^n_t)_{t \geq 0}$ is a time-homogeneous Mar\-kov  process, and by the vertex exchangeability of the MFFF dynamics $(\pi^n_t)_{t \geq 0}$ is also a time-homogeneous Markov process. Using the notation of \cite[\S 3.2]{swart_intertwining},
the Markov process $(\mathcal{G}^n_t)$ is \emph{intertwined} on top of $(\underline{a}^n_t)$ (c.f.\ \cite[Proposition 3.4]{swart_intertwining}).
 \end{remark}

In order to state some of our main results, we need to define the notion of local weak limits of age-driven IRGs: this is what we will do in the next section.

\subsection{Age-driven multitype branching processes}\label{SS: age-driven bps}

We introduce a family of branching process trees, also augmented with ages, which appear as local weak limits of age-driven IRGs.

\begin{definition}[Age-driven multitype branching process (MBP)] \label{def:branchingprocess}

$ $

Given a Borel probability measure $\pi$ on $[0,\infty)$ we define $T^\pi$, a multitype Galton--Watson tree with vertex set $V(T^\pi)$ and \emph{with ages} $a:V(T^\pi)\rightarrow[0,\infty)$, as follows. The root $\rho$ has age $a(\rho)\stackrel{d}=\pi$, and then any vertex of age $s$ has an independent set of offspring vertices with ages given by a Poisson random measure with intensity $(s\wedge u)\mathrm{d}\pi(u)$.

 We denote by $|T^\pi|$ the cardinality of $V(T^\pi)$.

We  write $T^\pi_s$ for the random tree $T^\pi$ constructed in the same way but starting with a root vertex of deterministic age $s$. We also define $T^\pi_\infty$ to be the random tree whose root has infinite age, meaning that its offspring have ages described by a Poisson random measure with intensity $u \,\mathrm{d}\pi(u)$. If $\int u \,\mathrm{d}\pi(u) < \infty$ then this is a finite intensity measure so the root almost surely has only finitely many offspring.

We write $\mathcal L_\pi$ for the branching operator, that is the Perron--Frobenius operator defined for $f \in L^1(\pi)$ by
\begin{equation}\label{L_def_statement}
\mathcal L_\pi f(s):= \int f(u)(s\wedge u) \mathrm{d}\pi(u).
\end{equation}
When $\mathcal{L}_\pi$ maps $L^2(\pi)$ into itself, denote by $\|\mathcal{L}_\pi\|$ the $L^2(\pi)$-operator norm of $\mathcal{L}_{\pi}$.
\end{definition}

$\mathcal{L}_\pi$ has a probabilistic interpretation. The root of $T^{\pi}_{s}$ has a random number $K$ of offspring with ages $a_1 \le \dots \le a_K$, and
\[\mathcal L_\pi f(s)=\mathbb{E}\left( f(a_1)+\dots+ f(a_K) \right) .\]

\begin{lemma}[Normalized principal eigenfunction]
\label{lemma_L_operator_basic_properties}
Suppose $\pi$ is a Borel probability measure on $[0,\infty)$ such that $0<\int x\,d\pi(x) < \infty$.  Then the Perron--Frobenius operator
 $\mathcal{L}_\pi$ is a compact self-adjoint operator on $L^2(\pi)$ whose principal eigenvalue $\lambda$ satisfies $0 < \lambda = \|\mathcal{L}_\pi\|$. Moreover there exists a unique nonnegative eigenfunction $\theta\in L^2(\pi)$ for which $\mathcal{L}_\pi \theta=\lambda \theta$ and $\int \theta(x)\mathrm{d}\pi(x)=1$.
\end{lemma}
We will prove a more detailed result in \S \ref{subsection_L_op_prop} that includes the statements of Lemma~\ref{lemma_L_operator_basic_properties}.

 In \S \ref{subsec_multitype_offspr_decay} we combine \cite[Lemma 6.1]{BollobasJansonRiordan} (characterizing supercriticality) and some further arguments to prove the following result.
\begin{proposition}[Trichotomy of $|T^\pi|$]\label{prop_multitype_trichotomy}
Let $\pi$ be a Borel probability measure on $(0,\infty)$ with $\int x\,d\pi(x) < \infty$. Then the following trichotomy holds:
\begin{itemize}
\item  $\|\mathcal{L}_\pi\|< 1$ if and only if $\E{|T^\pi|}<\infty$;
\item  $\|\mathcal{L}_\pi\|= 1$ if and only if $\Prob{|T^\pi|<\infty}=1$ and $\E{|T^\pi|}=\infty$;
\item  $\|\mathcal{L}_\pi\|>1$ if and only if $\Prob{|T^\pi|=\infty}>0$.
\end{itemize}
We will say that $\pi$ is \emph{age-subcritical}, \emph{age-critical}, and \emph{age-supercritical} when $\|\mathcal{L}_\pi\|<1$, $\|\mathcal{L}_\pi\|=1$, and $\|\mathcal{L}_\pi\|>1$, respectively.
\end{proposition}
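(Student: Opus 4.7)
The plan is to connect $\|\mathcal{L}_\pi\|$ both to the expected total progeny of $T^\pi$ and to its survival probability. By the Poissonian offspring rule in Definition~\ref{def:branchingprocess}, a vertex of age $s$ has expected number of generation-$k$ descendants equal to $(\mathcal{L}_\pi^k\mathbf{1})(s)$, where $\mathbf{1}$ denotes the constant function $1$; this is a short induction on $k$, since the next-generation offspring averaging is exactly one application of $\mathcal{L}_\pi$. Integrating over the root age distribution $\pi$ gives
\[
\mathbb{E}[|T^\pi|] = \sum_{k=0}^\infty \langle \mathbf{1}, \mathcal{L}_\pi^k \mathbf{1}\rangle_{L^2(\pi)}.
\]

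I would then analyse this sum via the spectral data from Lemma~\ref{L: operator properties}. If $\|\mathcal{L}_\pi\| < 1$, self-adjointness and $\|\mathbf{1}\|_{L^2(\pi)} = 1$ yield $\langle \mathbf{1}, \mathcal{L}_\pi^k \mathbf{1}\rangle \leq \|\mathcal{L}_\pi\|^k$, so the geometric series converges and $\mathbb{E}[|T^\pi|] < \infty$. If $\|\mathcal{L}_\pi\| \geq 1$, let $\theta$ be the principal eigenfunction normalized so that $\int \theta\,d\pi = 1$. The spectral theorem for a positive semidefinite compact self-adjoint operator then gives
\[
\langle \mathbf{1}, \mathcal{L}_\pi^k \mathbf{1}\rangle \;\geq\; \|\mathcal{L}_\pi\|^k \cdot \frac{|\langle \mathbf{1}, \theta\rangle_{L^2(\pi)}|^2}{\|\theta\|_{L^2(\pi)}^2} \;=\; \frac{\|\mathcal{L}_\pi\|^k}{\|\theta\|_{L^2(\pi)}^2},
\]
whose sum diverges, so $\mathbb{E}[|T^\pi|] = \infty$.

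For the survival criterion, I would apply \cite[Lemma 6.1]{BollobasJansonRiordan} to the symmetric kernel $\kappa(s,u) = s\wedge u$ on the probability space $([0,\infty),\pi)$: its integral operator is exactly $\mathcal{L}_\pi$, which by hypothesis is Hilbert--Schmidt, and the associated Poissonian multitype branching process agrees in law with $T^\pi$. That result asserts $\mathbb{P}(|T^\pi| = \infty) > 0$ if and only if $\|\mathcal{L}_\pi\| > 1$. Combining with the preceding step yields the trichotomy, since the three ranges of $\|\mathcal{L}_\pi\|$ partition $[0,\infty)$ and correspond respectively to finite expected size, infinite expected size with almost sure extinction, and positive survival probability. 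The main subtlety is the critical/supercritical lower bound on $\mathbb{E}[|T^\pi|]$, which requires $\mathbf{1}$ to have nonzero component along the principal eigenspace of $\mathcal{L}_\pi$; this is guaranteed by the normalization $\int\theta\,d\pi = 1$, itself a consequence of the strict positivity of $\theta$ (Perron--Frobenius). One should also briefly verify that the branching process implicit in \cite{BollobasJansonRiordan} matches our intrinsically defined $T^\pi$, which is immediate by comparing offspring rules.
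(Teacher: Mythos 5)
Your proposal is correct and follows essentially the same route as the paper: the series representation $\mathbb{E}\left[|T^\pi|\right]=\sum_{k\ge 0}\langle \mathbf{1},\mathcal{L}_\pi^k\mathbf{1}\rangle_\pi$ obtained by iterating the offspring operator, spectral analysis of the self-adjoint positive semidefinite $\mathcal{L}_\pi$ for the subcritical/critical dichotomy, and \cite[Lemma 6.1]{BollobasJansonRiordan} for the survival criterion. The only differences are cosmetic: you bound the subcritical series directly by $\|\mathcal{L}_\pi\|^k\|\mathbf{1}\|_{L^2(\pi)}^2$ where the paper tilts by $\theta$ via Chebyshev's integral inequality, and your critical-case lower bound uses only the positive semidefiniteness of $\mathcal{L}_\pi^k$ together with $\langle\mathbf{1},\theta\rangle_\pi=1$ where the paper additionally invokes the spectral gap --- both are valid shortcuts within the same framework.
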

In fact, the results of Proposition \ref{prop_multitype_trichotomy} hold under slightly weaker conditions on $\pi$. Further details are deferred to \S \ref{subsec_multitype_offspr_decay}.

To make sense of approximating age-driven IRGs by age-driven multitype Galton--Watson trees, we briefly introduce one version of \emph{local weak convergence} of random graphs, as studied by Aldous, Benjamini and Schramm. See \cite{van der Hofstad 2} for a comprehensive account of this.

\begin{definition}[Local weak convergence]\label{def:localweakconv}
Let $(G^n,\, n\ge 1)$ be a sequence of random graphs where $G^n$ has vertex set $[n]$, and let $\mathcal{B}^n_k(w)$ be the $k$-neighborhood in $G^n$ of a vertex $w\in[n]$, viewed as a graph rooted at $w$. Then we say $G^n$ converges in probability in the local weak sense to the random rooted graph $(G,\rho)$ if for every rooted graph $(H,v)$, and every $k\ge 1$,
\begin{equation}\label{eq:localweakprob1}\frac{1}{n}\sum_{w\in[n]} \mathds{1}[ \mathcal{B}^n_k(w)\simeq (H,v) ] \;\stackrel{\mathbb{P}}\longrightarrow \; \Prob{\mathcal{B}_k(\rho) \simeq (H,v)},
\qquad n \to \infty,
\end{equation}
 where $\mathcal{B}_k(\rho)$ is the $k$-neighborhood of $\rho$ in $G$ and the relation $\simeq$ denotes the root-preserving isomorphism of rooted graphs.
Let us denote by $\rho^n$ a vertex which is independent of $G^n$ and uniformly distributed in $[n]$.
As a consequence of \eqref{eq:localweakprob1}, we obtain the weaker condition
\begin{equation}\label{eq:localweakdistexch}
\Prob{ \mathcal{B}^n_k(\rho^n) \simeq (H,v)} \;  \longrightarrow  \; \Prob{\mathcal{B}_k(\rho)\simeq (H,v)}, \qquad n \to \infty,
\end{equation}
for every rooted graph $(H,v)$, which is called convergence in distribution in the local weak sense. (See \cite[Def. 2.7]{van der Hofstad 2}.)
\end{definition}

\begin{definition}[Convergence in probability of age distributions]\label{def_conv_of_measures_simple}
 If $\pi$ is a probability measure on $\mathbb{R}$ and $\pi^1,\pi^2,\dots$ is a sequence of (possibly random) probability measures on $\mathbb{R}$ then we say that
  $\pi^n \stackrel{\mathbb{P}}{\Rightarrow} \pi$ as $n \to \infty$
  if $\int f(s)\, \mathrm{d} \pi^n(s) \stackrel{\mathbb{P}}{\rightarrow} \int f(s)\, \mathrm{d} \pi(s)$ as $n \to \infty$ for any bounded continuous function $f:\mathbb{R} \to \mathbb{R}$.
\end{definition}

\begin{proposition}[IRG locally converges to MBP]\label{prop:localweakconv}
Let $G^n\stackrel{d}=\Gage(n,\underline{a}^n)$ be a sequence of age-driven IRGs, for which the empirical age distributions satisfy $\pi^n\stackrel{\mathbb{P}}{\Rightarrow} \pi$,
where $ \int x\,d\pi(x) < \infty$. Then $G^n$ converges in probability in the local weak sense to the age-driven multitype branching process  tree $T^\pi$.
\end{proposition}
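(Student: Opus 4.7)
The plan is to follow the standard breadth-first-search (BFS) argument that establishes local weak convergence of inhomogeneous random graphs to their natural branching process limits, as in \cite{van der Hofstad 2}. By the vertex-exchangeability of the age-driven IRG construction in Definition~\ref{def:agedrivenIRG}, taking the expectation of the left-hand side of~\eqref{eq:localweakprob} yields exactly $\Prob{B^n_k(\rho^n)\simeq(H,v)}$ for a uniformly chosen vertex $\rho^n\in[n]$. A routine second-moment calculation then reduces~\eqref{eq:localweakprob} to showing both (a) $\Prob{B^n_k(\rho^n)\simeq(H,v)} \to \Prob{B_k(\rho)\simeq(H,v)}$, and (b) that for two independent uniform vertices $\rho^n_1,\rho^n_2$, the events $\{B^n_k(\rho^n_i)\simeq(H,v)\}$ are asymptotically independent; (b) follows from (a) together with the fact that with high probability $B^n_k(\rho^n_1)\cap B^n_k(\rho^n_2)=\varnothing$ and no edges span between them, since the typical $k$-neighbourhood sizes are $O_P(1)$ and only $O(1/n)$ interference is thereby introduced.

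For (a), I would explore $B^n_k(\rho^n)$ layer by layer. Conditional on $\underline a^n$ and on the age $s=a^n(\rho^n)$, the neighbour set of $\rho^n$ is determined by independent $\mathrm{Bernoulli}\bigl(1-\exp(-(s\wedge a^n(j))/n)\bigr)$ variables over the other vertices. Each success probability is $O(1/n)$, so a standard Poisson approximation shows that the number of neighbours of $\rho^n$ with ages in a Borel set $A\subseteq[0,\infty)$ is close in total variation to a Poisson variable with mean
\[
\sum_{j\ne \rho^n}\tfrac{s\wedge a^n(j)}{n}\mathds{1}_{\{a^n(j)\in A\}}=\int_A(s\wedge u)\,\mathrm{d}\pi^n(u)+O(1/n),
\]
which converges in probability to $\int_A(s\wedge u)\,\mathrm{d}\pi(u)$ by the hypothesis $\pi^n\stackrel{\mathbb{P}}{\Rightarrow}\pi$. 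This is exactly the first-generation offspring intensity of $T^\pi_s$. Because only finitely many vertices have been revealed after any finite number of BFS steps, the empirical age measure on the unrevealed vertices still converges to $\pi$ in probability, and the edge-independence built into Definition~\ref{def:agedrivenIRG} lets us iterate this argument to the next layer with the same limit law. After $k$ layers one obtains convergence in distribution of $B^n_k(\rho^n)$ to $B_k(\rho)$ as rooted graphs, yielding (a).

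The main technical difficulty lies in the bootstrap from generation $j$ to generation $j+1$: the revealed types in generation $j$ are themselves random, and in principle they could concentrate on parts of $[0,\infty)$ where the Poisson approximation degrades (in particular, on very large ages). This is where the Hilbert--Schmidt hypothesis $\|\mathcal{L}_\pi\|_{\mathrm{HS}}<\infty$ enters through the bound $\int\!\!\int(s\wedge u)^2\,\mathrm{d}\pi(s)\mathrm{d}\pi(u)<\infty$, which controls the second moment of the typical offspring count and justifies truncating ages at some large threshold $M$ with error that can be made arbitrarily small uniformly in $n$. Combined with tightness of $\{\pi^n\}$ and a standard uniform integrability argument, this renders the Poisson approximation and the inductive step rigorous at every layer, completing the proof.
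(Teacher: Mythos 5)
The paper does not actually prove this proposition: it is imported wholesale as a citation of \cite[Theorem 2.11]{van der Hofstad 2} (the paper only remarks that it underpins Theorem 9.1 of \cite{BollobasJansonRiordan}), so there is no internal proof to compare against. What you have written is, in outline, exactly the standard exploration argument by which the cited result is proved: reduce \eqref{eq:localweakprob} to first- and second-moment statements for a uniform root, couple the breadth-first exploration of $B^n_k(\rho^n)$ with the multitype Poisson branching process $T^\pi$ layer by layer via Le Cam--type Poisson approximation, and get asymptotic independence of two roots from the fact that a first exploration reveals only $O_P(1)$ vertices. That skeleton is sound, so I would call this a correct reconstruction of the literature proof rather than a gap.

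A few soft spots worth tightening, none fatal. First, the identity $\E{\frac1n\sum_v \mathds{1}_{\{B^n_k(v)\simeq (H,v)\}}}=\Prob{B^n_k(\rho^n)\simeq(H,v)}$ needs no exchangeability at all (it is just averaging over a uniform independent root); indeed, conditional on a fixed age vector $\underline a^n$ the graph is \emph{not} exchangeable, so the appeal to exchangeability is the wrong justification even though the conclusion is trivially true. Second, the convergence $\int_A(s\wedge u)\,\mathrm{d}\pi^n(u)\to\int_A(s\wedge u)\,\mathrm{d}\pi(u)$ from $\pi^n\stackrel{\mathbb{P}}{\Rightarrow}\pi$ requires $A$ to be a $\pi$-continuity set (or a formulation in terms of weak convergence of the offspring point process); this is routine but should be said. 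Third, the role you assign to $\|\mathcal{L}_\pi\|_{\mathrm{HS}}<\infty$ is somewhat misplaced: for the kernel $s\wedge u$ the offspring intensity emanating from an age-$s$ vertex has total mass at most $s$, and the truncation of large ages at a level $M$ is already uniform in $n$ by tightness of $\{\pi^n\}$, i.e.\ $\pi^n([M,\infty))\to\pi([M,\infty))$ at continuity points; the Hilbert--Schmidt hypothesis is really there to place the limit in the framework of \cite{BollobasJansonRiordan} and \cite{van der Hofstad 2} rather than to rescue the inductive step. With those clarifications your sketch matches the proof of the quoted theorem.
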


We deduce the proof of Proposition \ref{prop:localweakconv} from \cite[Theorem 3.11]{van der Hofstad 2} by a truncation argument in \S \ref{subsection_local_weak_convergence}.

\subsection{Critical forest fire equations}
\label{subsection_intro_forest_fire_eqs}

In this section we recall the main results of \cite{RathToth}, which concern the hydrodynamic limit of the component size vector of the MFFF. The assumptions of this paper on the initial state of the MFFF are different from those of \cite{RathToth}, so the adaptation of the results of \cite{RathToth} to our setting will require extra work.

\begin{definition}[Component size vector in $\textup{MFFF}(n, \lambda)$]\label{def_v_n_k_t}
For any $t \geq 0$ and for each $k=1,2,\dots$  let us define
\begin{equation}\label{eq:defnvn}
v^n_k(t):= \frac{1}{n}\#\Big\{ \text{vertices in size $k$ components at time }t\Big\}.
\end{equation}
We write $\underline{v}^n(t)$ for the vector $(v_k^n(t))_{k=1}^\infty$.
\end{definition}

The central observation of \cite{RathToth} is that limits of $\underline{v}^n(t)$ in a sequence of forest fires in the self-organized critical regime \eqref{lightning_critical_regime} satisfy a family of coupled differential equations. Our assumptions on the initial state will be different from those of
 \cite[Theorem 1]{RathToth}, but the conclusions will be the same.

\begin{assumption}[$\underline{v}(0)$ is the law of $|T^{\pi_0}|$] \label{assumption_v_k_0_comes_from_BP}
Suppose that $\pi_0$ is a Borel probability measure on $[0,\infty)$ satisfying
 $\int u \,d\pi_0(u) < \infty$, moreover $\pi_0$ is either age-critical or age-subcritical. Let us define $v_k(0)=\mathbb{P}( |T^{\pi_0}|=k )$ for
$k=1,2,\dots$ and let $\underline{v}(0)=\left( v_k(0) \right)_{k=1}^{\infty}$.
\end{assumption}

\begin{proposition}[Critical forest fire equations]\label{prop_Smol_uniqueness_simplified}
If the initial condition $\underline{v}(0)$  satisfies Assumption \ref{assumption_v_k_0_comes_from_BP}, then
 the critical forest fire equations
 \begin{eqnarray}\label{smol_ff_eq}
  \frac{\mathrm{d}}{\mathrm{d}t} v_k(t) & = &    \frac{k}{2} \sum_{\ell=1}^{k-1} v_\ell(t)v_{k-\ell}(t)  -k v_k(t), \qquad
 k \geq 2,\\ \label{smol_bc}
 \sum_{k=1}^\infty v_k(t) & = & 1,
  \end{eqnarray}
have a unique solution $\underline{v}(\cdot)$, which also has the following properties:
\begin{enumerate}
\item
\begin{equation}\label{varphi_def}
\frac{\mathrm{d}}{\mathrm{d} t } v_1(t)= \begin{cases} - v_1(t) & \text{ if } \quad 0 \leq t <t_{\mathrm{gel}}, \\
-v_1(t)+\varphi(t) & \text{ if } \quad t > t_{\mathrm{gel}},
\end{cases}
\end{equation}
where the \emph{gelation time} $t_{\mathrm{gel}}$ is defined by
 \begin{equation}\label{def_gelation_time}
t_{\mathrm{gel}} = \left(\sum_{\ell=1}^\infty \ell v_\ell(0)  \right)^{-1} \quad \text{(using the convention $\infty^{-1}=0$)},
\end{equation}
 and
\begin{equation}\label{varphi_positive_loc_lip}
\varphi: \, [t_{\mathrm{gel}}, +\infty)  \to (0,+\infty) \text{ is locally Lipschitz-continuous.}
\end{equation}
\item  $v_k(t)$ decays exponentially as $k \to \infty$ if $0\leq t < t_{\mathrm{gel}}$, but
\begin{equation}\label{v_k_polynomial_decay}
\sum_{\ell=k}^\infty v_\ell(t) \approx \sqrt{ \frac{2 \varphi(t)}{\pi}} k^{-1/2} \quad \text{as} \quad k \to \infty   \quad \text{for any} \quad t \geq t_{\mathrm{gel}}
\end{equation}
(where $a_k \approx b_k$ is a shorthand for $ \lim_{k \to \infty} \frac{a_k}{b_k} =1$).
\end{enumerate}
\end{proposition}

We will prove Proposition \ref{prop_Smol_uniqueness_simplified} in \S \ref{subsection_smol_uniqueness}.
In \cite[Theorem 1]{RathToth} the same conclusions are made under the assumption that $\sum_{\ell=1}^\infty \ell^3 v_\ell(0) <+\infty$
(which neither implies nor is implied by Assumption \ref{assumption_v_k_0_comes_from_BP}).

\begin{remark}\label{remarks_varphi}

$ $

\begin{enumerate}[(i)]
\item \label{remark_varphi_control}
 As soon as one picks any \emph{control function}
$\varphi(\cdot)$, one can construct $v_1(\cdot)$ by solving \eqref{varphi_def}, and then $v_k(\cdot)$ for $k=2,3,\ldots$ inductively using \eqref{smol_ff_eq}. Proposition \ref{prop_Smol_uniqueness_simplified} states that there is a unique $\varphi(\cdot)$
such that the resulting family $\left( v_k(t) \right)_{k=1}^\infty$ of functions satisfies \eqref{smol_bc} for all $t \geq 0$.
\item \label{remark_explicit_smol_stationary}
Currently there is no known explicit solution of (\eqref{smol_ff_eq}+\eqref{smol_bc}) except for the unique stationary solution
$v_k(t)=v_k(0)=\frac{2}{k} \binom{2k-2}{k-1}4^{-k}$, $k \geq 1$. Note that this $\underline{v}(0)$ satisfies Assumption \ref{assumption_v_k_0_comes_from_BP}, see Remark \ref{remark_about_v_k_t_pi_t}\eqref{remark_statiorary_pi}.
\end{enumerate}
\end{remark}

 The connection of the critical forest fire equations  to the MFFF is given by the following result.

\begin{proposition}[Convergence of component size vector]\label{prop_RathToth_conv}
Suppose that $\underline{v}(0)$  satisfies Assumption \ref{assumption_v_k_0_comes_from_BP}.
Let $\underline{v}(\cdot)$ denote the unique solution to the critical forest fire equations (\eqref{smol_ff_eq}+\eqref{smol_bc}) with initial state $\underline{v}(0)$.
Let $(\mathcal G^n,\,n\ge 1)$ be a sequence of $\textup{MFFF}(n, \lambda(n))$ processes as defined in Definition \ref{def:graphical_mfff}, for which \eqref{lightning_critical_regime} holds, and
 $| v^n_k(0)-v_k(0)| \stackrel{\mathbb{P}}{\rightarrow} 0$ for all $k \in \mathbb{N}$ as $n \to \infty$.
Let $\underline{v}^n(\cdot)$ be defined by  \eqref{eq:defnvn}.
 Then for any $t_{\max} \in [0,\infty)$ we have
\begin{equation}\label{sup_conv_v_n_k_t_to_v_k_t}
\sup_{k \geq 1} \sup_{0 \leq t \leq t_{\max}}  | v^n_k(t)-v_k(t)| \stackrel{\mathbb{P}}{\rightarrow} 0,
 \qquad n \to \infty.
\end{equation}
\end{proposition}
The above result for fixed $t$ and fixed $k$ is proved in \cite[Theorem 2]{RathToth}, while the stronger \eqref{sup_conv_v_n_k_t_to_v_k_t}
 is proved in \cite[Theorem 1.5]{CraneFreemanToth}.
   Note that the assumptions under which we prove the uniqueness of
the solution of the critical forest fire equations (c.f.\ Proposition \ref{prop_Smol_uniqueness_simplified}) are different from those of the analogous uniqueness result
in \cite{RathToth} (c.f.\ Theorem 1 therein), but the proofs of \cite[Theorem 2]{RathToth} and \cite[Theorem 1.5]{CraneFreemanToth} only take the uniqueness result as an input, so the same proof also gives our Proposition \ref{prop_RathToth_conv}.

Proposition \ref{prop_RathToth_conv} describes the hydrodynamic limit of the component size vector $\underline{v}^n(t)$ of the MFFF in terms of the critical forest fire equations.
We want to give a similar description of the hydrodynamic limit of the empirical age measure $\pi^n_t$, which will also allow us to identify the
 local weak limit of the graph $\mathcal{G}^n_t$.

\subsection{Main results}
\label{subsection_intro_main_results}

Throughout \S \ref{subsection_intro_main_results} we enforce Assumption \ref{assumption_age_driven_initial_state}, that the MFFFA is started from an age-driven IRG.

Our first main result is a limit theorem for the empirical measure of ages in a family of $\mathrm{MFFFA}$ processes whose initial empirical age distributions converge in probability.

\begin{theorem}[Convergence in probability of empirical age distribution]\label{thm_convergence}{\quad}\\
Let $(\G^n_t,\,t \geq 0)$ be a family of $\mathrm{MFFFA}(n,\underline{a}^n_0,\lambda(n))$ processes that satisfy Assumption \ref{assumption_age_driven_initial_state}.
Suppose the initial empirical age measures 
satisfy $\pi^n_0 \stackrel{\mathbb{P}}{\Rightarrow} \pi_0$,
where  $\int \!x\,d\pi_0(x) < \infty$ and $\pi_0$ is either age-critical or age-subcritical.
Assume that $\lambda(n)$ satisfies
\eqref{lightning_critical_regime}.
Then for all $t \geq 0$ we have
$\pi^n_t \stackrel{\mathbb{P}}{\Rightarrow} \pi_t$ as $n \to \infty$,
where $(\pi_t)_{t \geq 0}$ is a family of probability measures determined by $\pi_0$ that satisfies $\int x\,d\pi_t(x) < \infty$ for all $t \geq 0$.
\end{theorem}
The proof of Theorem~\ref{thm_convergence} is carried out in \S \ref{section_cluster_growth_age} and is completed in \S \ref{subsection_conc_emp_age_distr}.
As we have already mentioned in \S \ref{section_intro}, we will identify  $\pi_t$ as the distribution of the age $a_t$ of the watched vertex at time $t$ of the \emph{cluster growth process with ages} $(C_t,a_t)_{t \geq 0}$, the law of which is determined uniquely by $\pi_0$. We refer the reader interested in the rigorous
definition of $(C_t,a_t)_{t \geq 0}$ to the introduction of \S \ref{section_cluster_growth_age} for details.

Our next main result concerns the local weak limit of the graph of the MFFFA process at time $t$.
\begin{theorem}[The local weak limit of MFFFA at time $t$]\label{thm_local_limit_mfff}
Let $(\G^n_t,\, t \geq 0 )$ and $\pi_0$ satisfy the conditions of Theorem \ref{thm_convergence}. Then the following hold.
\begin{enumerate}[(i)]
\item \label{local_weak_limit_in_ffm} For any $t \geq 0$, $\mathcal{G}^n_t$ converges in probability in the local weak sense to $T^{\pi_t}$ as $n\rightarrow\infty$,
where $\pi_t$ is defined in Theorem \ref{thm_convergence}.
\item  \label{v_k_t_using_pi_t} The family of functions $t \mapsto v_k(t), \, k=1,2,\dots$ defined by
\begin{equation}\label{eq_v_k_t_using_pi_t}
v_k(t)= \Prob{\left|T^{\pi_t}\right|=k}, \qquad t\geq0
\end{equation}
coincides with the unique solution of  the critical forest fire equations (\eqref{smol_ff_eq}+\eqref{smol_bc}) guaranteed by
Proposition \ref{prop_Smol_uniqueness_simplified}.
\item \label{age_crit_after_gel} $\pi_t$ is age-subcritical for $t < t_{\mathrm{gel}}$ and $\pi_t$ is age-critical for $t \geq t_{\mathrm{gel}}$ (where $t_{\mathrm{gel}}$ was introduced in Proposition \ref{prop_Smol_uniqueness_simplified}).
\end{enumerate}
\end{theorem}
We will prove Theorem~\ref{thm_local_limit_mfff} in \S \ref{subsection_proof_of_thm_local_limit_mfff}.

Our final main result is the precise statement of equations \eqref{eq:ageDE_intro} and \eqref{eq:phiformula_intro} which describe the driving forces behind the time evolution of $\pi_t$.
\begin{theorem}[Age differential equations]\label{thm_age_pde}Assume $\int u \,d\pi_0(u) < \infty$.
Consider the family $(\pi_t)$ from Theorem \ref{thm_convergence}, and for any $t \geq t_{\mathrm{gel}}$ denote by $\theta_t$ the eigenfunction of $\mathcal{L}_{\pi_t}$ corresponding to the eigenvalue $\lambda=1$, as in Theorem \ref{thm_local_limit_mfff}\eqref{age_crit_after_gel} and Lemma \ref{lemma_L_operator_basic_properties}.
 For every compactly supported and continuously differentiable test function $f: [0,\infty) \to \mathbb{R}$,
\begin{multline}\label{eq:ageDE}
\frac{\partial}{\partial t} \int f(s)\mathrm{d}\pi_t(s) = \\
 \begin{cases} \int f'(s)\mathrm{d}\pi_t(s)  & \quad t < t_{\mathrm{gel}}\\
 \int f'(s)\mathrm{d}\pi_t(s) - \int f(s)\varphi(t)\theta_t(s)\mathrm{d}\pi_t(s) + \varphi(t) f(0)
  &\quad t> t_{\mathrm{gel}}
 \end{cases},
\end{multline}
where $\varphi(\cdot)$ denotes the control function appearing in equation~\eqref{varphi_def} that corresponds to the solution of (\eqref{smol_ff_eq}+\eqref{smol_bc}) arising from Theorem \ref{thm_local_limit_mfff}\eqref{v_k_t_using_pi_t}.
 For $t \geq t_{\mathrm{gel}}$  we have
\begin{equation}\label{eq:phiexpression}
\varphi(t) = \left( \int_0^\infty \theta_t(s)^3\mathrm{d}\pi_t(s) \right)^{-1}\,.
\end{equation}
\end{theorem}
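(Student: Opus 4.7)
\emph{Proof plan.} My plan is to identify the two driving forces via the Markov generator of $\mathrm{MFFFA}(n,\underline{a}^n_0,\lambda(n))$ and pass to the $n\to\infty$ limit using Theorems~\ref{thm_convergence} and~\ref{thm_local_limit_mfff}. For a compactly supported $C^1$ function $f$, Dynkin's formula writes $\int f\,\mathrm{d}\pi^n_t$ as the time integral of the generator applied to the observable $\omega\mapsto \frac{1}{n}\sum_i f(a^n_t(i))$, plus a mean-zero martingale. The generator splits into (a) a transport piece from the deterministic unit-rate ageing of each vertex, whose integrand is $\int f'\,\mathrm{d}\pi^n_s$, and (b) a burning piece: each vertex $i$ is reset to age zero at rate $\lambda(n)|C^n_s(i)|$, where $C^n_s(i)$ is its component, contributing
\[
\frac{1}{n}\sum_{i=1}^n \lambda(n)\,|C^n_s(i)|\bigl(f(0)-f(a^n_s(i))\bigr).
\]
Standard exchangeability estimates together with $\lambda(n)\to 0$ control the quadratic variation of the martingale and let us drop it in the limit.

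\medskip

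\emph{Subcritical regime.} For $t<t_{\mathrm{gel}}$, Proposition~\ref{prop_Smol_uniqueness_general} gives $\sum_k kv_k(t)<\infty$ with exponential tails. After a truncation argument, $\lambda(n)\sum_k kv_k^n(s)=\frac{1}{n}\sum_i \lambda(n)|C^n_s(i)|\to 0$, so the burning piece vanishes and only the transport piece survives, yielding the first case of~\eqref{eq:ageDE}.

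\medskip

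\emph{Critical regime.} For $t>t_{\mathrm{gel}}$, Theorem~\ref{thm_local_limit_mfff}\eqref{age_crit_after_gel} tells us $\pi_t$ is age-critical, so $T^{\pi_t}$ has infinite mean total progeny; local weak convergence alone cannot supply the limit of the burning integrand. The main technical step is to prove
\begin{equation}\label{P:keylimit}
 \frac{\lambda(n)}{n}\sum_{i=1}^n |C^n_s(i)|\,g(a^n_s(i))\;\stackrel{\mathbb{P}}{\longrightarrow}\;\varphi(s)\int g(a)\,\theta_s(a)\,\mathrm{d}\pi_s(a)
\end{equation}
for continuous bounded $g$ and $s>t_{\mathrm{gel}}$. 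Taking $g\equiv 1$ recovers the known limit $\lambda(n)\sum_k kv_k^n(s)\to\varphi(s)$, consistent with Proposition~\ref{prop_RathToth_conv} and~\eqref{varphi_def}. The identification of the $\theta_s$-weighting will come from the Perron--Frobenius structure of $\mathcal{L}_{\pi_s}$ (Lemma~\ref{L: operator properties}) combined with the cluster-growth representation of~\cite{CraneFreemanToth} used in Theorem~\ref{thm_convergence}: size-biasing a vertex by its component size picks out the principal eigendirection, so the typical age in a large component is distributed as $\mu_s=\theta_s\,\mathrm{d}\pi_s$. Applying~\eqref{P:keylimit} to each piece of the burning integrand produces the second case of~\eqref{eq:ageDE}. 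This identification is the chief obstacle: the contribution of macroscopic components is invisible to the local limit in Theorem~\ref{thm_local_limit_mfff}\eqref{local_weak_limit_in_ffm}, so a complementary global argument, passing through the cluster-growth process, is indispensable.

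\medskip

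\emph{Formula for $\varphi$.} Granted~\eqref{eq:ageDE}, my plan for~\eqref{eq:phiexpression} is to use the requirement that $\pi_t$ remain age-critical, i.e.\ that the principal eigenvalue of $\mathcal{L}_{\pi_t}$ equals $1$ for all $t\ge t_{\mathrm{gel}}$. First-order perturbation of a simple isolated eigenvalue (Lemma~\ref{L: operator properties}\eqref{L:spectralgap}) forces
\[
 \bigl\langle \theta_t,\,(\partial_t \mathcal{L}_{\pi_t})\theta_t\bigr\rangle_{L^2(\pi_t)}=0.
\]
Inserting $\partial_t\pi_t$ from~\eqref{eq:ageDE}, integrating by parts using $\partial_u[(s\wedge u)\theta_t(u)]=\mathds{1}_{u\le s}\theta_t(u)+(s\wedge u)\theta_t'(u)$, and repeatedly applying the eigenvalue identity $\int(s\wedge u)\theta_t(u)\,\mathrm{d}\pi_t(u)=\theta_t(s)$ together with its $s$-derivative $\theta_t'(s)=\int_s^\infty\theta_t(u)\,\mathrm{d}\pi_t(u)$, reduces the constraint to
\[
 \int (\theta_t^2)'(u)\,\mathrm{d}\pi_t(u) \;=\; \varphi(t)\int \theta_t(u)^3\,\mathrm{d}\pi_t(u).
\]
A Fubini computation (in which $\theta_t(0)=0$ from Lemma~\ref{L: operator properties}\eqref{L:eigen} absorbs any atom of $\pi_t$ at the origin) gives $\int (\theta_t^2)'\,\mathrm{d}\pi_t=\left(\int\theta_t\,\mathrm{d}\pi_t\right)^2=1$, yielding~\eqref{eq:phiexpression}.
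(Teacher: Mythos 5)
Your overall scheme is genuinely different from the paper's, but it has a real gap at its centre: the displayed limit
\[
\frac{\lambda(n)}{n}\sum_{i=1}^n |C^n_s(i)|\,g(a^n_s(i))\;\stackrel{\mathbb{P}}{\longrightarrow}\;\varphi(s)\int g(a)\,\theta_s(a)\,\mathrm{d}\pi_s(a)
\]
is asserted, not proved, and in the critical regime it \emph{is} the theorem. Since $\sum_k k v_k(s)=\infty$ for $s\ge t_{\mathrm{gel}}$, the burning is carried by mesoscopic clusters: neither Proposition~\ref{prop_RathToth_conv} (an $\ell^1$ statement about $\underline v^n$) nor the local weak limit of Theorem~\ref{thm_local_limit_mfff} controls the size-biased, age-weighted sum above, and even your $g\equiv 1$ case does not follow from the cited results — what \cite{RathToth} provides is the time-integrated burning rate used in \eqref{phi_n_converges}, not pointwise convergence of $\lambda(n)\sum_k k v_k^n(s)$. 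The heuristic ``size-biasing picks out the Perron eigendirection'' is the finite-type argument of \cite{Yeo}; making it rigorous for a continuum of types at criticality is exactly the hard step, and you supply no substitute for the analytic input the paper uses for this purpose. The paper's route is: first prove \eqref{eq:phiexpression} by matching the square-root singularity \eqref{f_T_expansion} of $\mathbb{E}(z^{|T^{\pi_t}|})$ (whose constant $\phi_t=(\int\theta_t^3\,\mathrm{d}\pi_t)^{-1}$ comes from the Volterra/Taylor analysis of Section~\ref{sec_properties_multitype}) against the R\'ath--T\'oth asymptotics behind \eqref{v_k_polynomial_decay}; then prove (Lemma~\ref{lemma:age_indeed_evolves_as_Markov}) via Corollary~\ref{ConditionalCt}, the branching identity $\mathbb{P}(\tau_{L_t+1}>t+h\mid C_t=k)=\psi_{t+h}(t)^k$, the expansion $\psi_{t+h}(t)=1-\tfrac{\varphi(t)}{2}h^2+o(h^2)$ and \eqref{f_s_expansion}, that the limiting age process jumps to $0$ at rate $\varphi(t)\theta_t(s)$, so that \eqref{eq:ageDE} is its Kolmogorov forward equation. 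Note also the logical order is reversed: the paper needs \eqref{eq:phiexpression} \emph{before} it can identify the jump rate, whereas you would obtain it afterwards — legitimate in principle, but only once your key limit is established by some argument of comparable strength (an adaptation of \cite{CraneFreemanToth}/\cite{Yeo}-type global estimates, or the generating-function analysis), which is missing. The pre-gelation case and the martingale/quadratic-variation control are minor by comparison, though the latter still needs the no-giant-component input.

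Your perturbation-theoretic derivation of \eqref{eq:phiexpression} is attractive and its core identity is right (the moving-measure Feynman--Hellmann computation does reduce, after the self-adjointness cancellation, to $\int \theta_t^2\,\mathrm{d}\dot\pi_t=0$), but as written it is not correct in general: the test function $\theta_t^2$ is only Lipschitz and not compactly supported, and \eqref{eq:ageDE} is stated for compactly supported $C^1$ functions. The atoms that cause trouble are not at the origin: whenever $\pi_0$ has atoms (e.g.\ the monodisperse case), $\pi_t$ has an atom at an age $\ge t$ by \eqref{eq: pi t in terms of psi t}, where $\theta_t>0$ and $\theta_t$ has a kink. With a one-sided derivative your Fubini step gives $\int(\theta_t^2)'\,\mathrm{d}\pi_t=1-\sum_a\theta_t(a)^2\pi_t(\{a\})^2<1$, which would contradict \eqref{eq:phiexpression}; the identity $=1$ only holds if, when extending \eqref{eq:ageDE} to Lipschitz test functions by mollification, you use the symmetrized derivative at atoms of $\pi_t$. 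You would also need to justify enough $t$-regularity of $\theta_t$ (the paper only records continuity in $t$) to differentiate the eigenvalue relation. So this part is repairable, but the main obstruction remains the unproved burning-profile limit above.
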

We prove Theorem \ref{thm_age_pde} in \S \ref{subsection_proof_of_age_pde}. Our proof crucially relies on the precise asymptotics of the generating function of the total number of vertices in the multitype branching process tree $T^{\pi_t}_s$
that we derive in \S \ref{sec_properties_multitype}.

We emphasise that the age differential equation (\eqref{eq:ageDE}+\eqref{eq:phiexpression}) describing the dynamics of $\pi_t$
 is autonomous and time-homogeneous, which is why we have chosen to formulate it in terms of ages rather than `birth-times' of vertices.

In the companion paper \cite{Crane2} it is shown that the system (\eqref{eq:ageDE}+\eqref{eq:phiexpression}) has a unique solution over any time interval $[0,T]$, for each age-critical Borel probability measure  $\pi_0$ satisfying $\int x\,d\pi_0(x) < \infty$. It is also shown there that $\pi_t$ is locally Lipschitz in $\pi_0$ with respect to the Wasserstein 1-metric $W_1$, uniformly over $[0,T]$. $W_1$ is the $L^1$ distance between cumulative distribution functions, and it metrizes simultaneous weak convergence and convergence of first moment. 
It is also shown in \cite{Crane2} that for any solution of (\eqref{eq:ageDE}+\eqref{eq:phiexpression}), $\theta_t$ is a continuous function of
$t \in [t_{\mathrm{gel}},\infty)$, with respect to $L^{\infty}([0,\infty))$, therefore in~\eqref{eq:ageDE} we see that $\int f(s) \,d\pi_t(s)$ is continuously differentiable for $t> t_{\mathrm{gel}}$.

\begin{corollary} \label{cor_jensen} Under the assumptions of Theorem \ref{thm_age_pde},
the total burning rate is bounded by one: for all $t \ge t_{\mathrm{gel}}$, $\varphi(t) \le 1$.
\end{corollary}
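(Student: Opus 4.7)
The plan is to derive the corollary directly from the formula \eqref{eq:phiexpression} together with the normalisation condition $\int \theta_t(s)\,\mathrm{d}\pi_t(s) = 1$ given in Lemma \ref{L: operator properties}\eqref{L:eigen}, using Jensen's inequality for the convex function $x \mapsto x^3$.

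More concretely, for $t \geq t_{\mathrm{gel}}$, the function $\theta_t$ is non-negative (it is the principal eigenfunction of the positive operator $\mathcal{L}_{\pi_t}$, and by Lemma \ref{L: operator properties}\eqref{L:Lipsch}--\eqref{L:eigen} is in fact increasing with $\theta_t(0)=0$), and satisfies $\int \theta_t(s)\,\mathrm{d}\pi_t(s) = 1$. Since $\pi_t$ is a probability measure on $[0,\infty)$ and $x \mapsto x^3$ is convex on $[0,\infty)$, Jensen's inequality applied to the random variable $\theta_t$ (with respect to $\pi_t$) yields
\begin{equation*}
\int_0^\infty \theta_t(s)^3\,\mathrm{d}\pi_t(s) \;\geq\; \left(\int_0^\infty \theta_t(s)\,\mathrm{d}\pi_t(s)\right)^{\!3} = 1.
\end{equation*}
Taking reciprocals and invoking \eqref{eq:phiexpression} gives $\varphi(t) \leq 1$, as required.

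There is essentially no obstacle: the argument is a one-line application of Jensen's inequality. The only mild subtlety is that one should check the integrals are well-defined, but this is already guaranteed by the fact that $\varphi(t)$ is positive and locally Lipschitz (from Proposition \ref{prop_Smol_uniqueness_general}), which combined with \eqref{eq:phiexpression} forces $\int \theta_t^3\,\mathrm{d}\pi_t$ to be finite and positive.
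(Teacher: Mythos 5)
Your argument is exactly the paper's proof: Jensen's inequality for the convex function $x \mapsto x^3$ applied to the non-negative function $\theta_t$ with $\int \theta_t(s)\,\mathrm{d}\pi_t(s) = 1$, then invoking \eqref{eq:phiexpression}. It is correct and essentially identical to the argument given in the paper.
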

\begin{proof}
 Apply Jensen's inequality to the integral in~\eqref{eq:phiexpression}, using the convexity of $x \mapsto x^3$ on $[0,\infty)$, noting that $\int \theta_t(s) d\pi_t(s) = 1$ and $\theta_t(s) \ge 0$.
\end{proof}
Solving the critical forest fire equations starting from the $m$-disperse state, where $v_k(0) = \mathds{1}[k=m]$ (for some $m \ge 2$),  at the gelation time $t_{gel} = 1/m$ the value of $\varphi$  is $m$. So Corollary \ref{cor_jensen} genuinely relies on Assumption \ref{assumption_v_k_0_comes_from_BP}.


\begin{remark}\label{remark_about_v_k_t_pi_t}

$ $
\begin{enumerate}[(i)]
\item \label{remark_assumtion_holds_at_time_t} If $\underline{v}(0)$ satisfies Assumption \ref{assumption_v_k_0_comes_from_BP} and $(\underline{v}(t))_{t \geq 0}$ denotes the
unique solution of (\eqref{smol_ff_eq}+\eqref{smol_bc}) guaranteed by Proposition \ref{prop_Smol_uniqueness_simplified}, then $\widetilde{\underline{v}}(0):= \underline{v}(t)$ also satisfies Assumption \ref{assumption_v_k_0_comes_from_BP} for any $t \geq 0$
 by Theorems \ref{thm_convergence}, \ref{thm_local_limit_mfff}.
 \item \label{remark_statiorary_pi} One can  check that the unique fixed point of the age differential equation described in Theorem~\ref{thm_age_pde} is the age-critical measure $\pi$
which has density $\mathrm{d}\pi(x)= \tfrac{1}{2} \mathrm{sech}^2(\tfrac{x}{2})\,\mathrm{d}x$. \\ The  principal eigenfunction of the operator $\mathcal{L}_\pi$ is $\theta(x) = 2\tanh(\tfrac{x}{2})$, $\varphi = 1/2$ and  $ \mathbb{P}(|T^{\pi}|=k)=\frac{2}{k} \binom{2k-2}{k-1}4^{-k}$ for any $k \geq 1$, c.f.\
Remark \ref{remarks_varphi}\eqref{remark_explicit_smol_stationary}.
 Many questions related to the stationary MFFF process are discussed in \cite{Crane}.
 \end{enumerate}
\end{remark}

\begin{remark}\label{remark_phi_t_giant_speed} Our proof of \eqref{eq:phiexpression} involves the asymptotics of the generating function
$\mathbb{E}\left(z^{|T^{\pi_t}|}\right)$ as $z \to 1$ (see Section \ref{sec_properties_multitype}) and thus it is analytic in nature.
 Let us therefore provide a non-rigorous probabilistic explanation of \eqref{eq:phiexpression}. Given an age-critical distribution $\pi_t$ and $h \in \mathbb{R}_+$,
let us define the distribution $\pi_{t,h}$ by $\int  f(x) \mathrm{d}\pi_{t,h}(x) = \int f(t+h) \mathrm{d} \pi_{t}(x) $. One can show that
\begin{equation}\label{eq_giant_growth}
\mathbb{P}\left( \, |T^{\pi_{t,h}}|=+\infty \, \right)=2 \textstyle{\left( \int_0^\infty \theta_t(s)^3\mathrm{d}\pi_t(s) \right)^{-1}} h +o(h), \qquad h \to 0.
\end{equation}
Note that a similar expression characterising the infinitesimal rate of emergence of the giant component in a family of IRGs near criticality appears in equation  (3.12) in Theorem 3.17 of \cite{BollobasJansonRiordan}. The fires in the MFFF burn the incipient giant clusters as they try to appear, so it is natural to expect that $\varphi(t)$,  the limiting total rate of mass of burnt vertices at time $t$, is proportional to the speed at which the giant cluster wants to grow, i.e. $2 \left( \int_0^\infty \theta_t(s)^3\mathrm{d}\pi_t(s) \right)^{-1}$.
In Section \ref{subsubsection_smol} we further comment on the reason why we have to multiply this speed by $1/2$ to obtain the value of $\varphi(t)$ given in \eqref{eq:phiexpression}.
\end{remark}

\subsection{Relation to other work}
\label{subsub_other_work}

\subsubsection{Erd\H{o}s--R\'enyi graphs and the Flory equations}

If we consider the $\mathrm{MFFF}(n,\lambda)$ model of Definition \ref{def:graphical_mfff} without lightning (i.e., $\lambda=0$) and start it from the empty graph on $n$ vertices,  we get back the classical Erd\H{o}s--R\'enyi graph process.
 It is well-known \cite{buffet_pule_1, buffet_pule_2} that if we consider the component sizes $(v^n_k(t))$ (c.f.\ Definition \ref{def_v_n_k_t}) then
  the limits $v^n_k(t) \stackrel{\mathbb{P}}{\rightarrow} v_k(t)$ exist as $n \to \infty$ and the limiting $(v_k(t))$ satisfy the Flory equations:
\begin{equation}\label{flory}
\frac{\mathrm{d}}{\mathrm{d}t} v_k(t) = \frac{k}{2}\sum_{\ell=1}^{k-1}v_\ell(t)v_{k-\ell}(t) - kv_k(t) \sum_{\ell=1}^\infty v_\ell(0),\quad k\ge 1,
\end{equation}
where $\sum_{\ell=1}^\infty v_\ell(0)=1$ follows from our normalization \eqref{eq:defnvn}. Note that \eqref{smol_ff_eq} agrees with \eqref{flory}
for any $k \geq 2$, but we obtain the r.h.s.\ of \eqref{varphi_def} by adding $\varphi(t)$ to the r.h.s.\ of \eqref{flory} when $k=1$ and $t > t_{\mathrm{gel}}$.
Also note that \eqref{smol_bc} fails for the solution of \eqref{flory} when $t > t_{\mathrm{gel}}$.

The local weak limit of the Erd\H{o}s--R\'enyi graph at time $t$ is the Galton--Watson tree with $\mathrm{Poi}(t)$ offspring distribution, see e.g.\ \cite[Theorem 2.11]{van der Hofstad 2}.
In the notation of Definition \ref{def:branchingprocess}, this tree is $T^{\delta_t}$, where $\delta_t$ is the Dirac measure concentrated on $t$.
The explicit solution of \eqref{flory} is given by
$$v_k(t)=\mathbb{P}(|T^{\delta_t}|=k)=\frac{k^{k-1}}{k!} e^{-kt}t^{k-1}, \qquad t \in [0,+\infty), $$
i.e., the Borel distribution with parameter $t$.
Note that in the supercritical regime $t>1$ we have $\sum_{k=1}^{\infty} v_k(t) = 1- \mathbb{P}(|T^{\delta_t}|=\infty)<1$, so the Borel distribution is a defective distribution when $t > 1$.

\subsubsection{Stochastic models of Smoluchowski's coagulation equations}\label{subsubsection_smol}

The family of equations \eqref{flory} is related to Smoluchowski's coagulation equations, for which the final sum in \eqref{flory} is replaced by $\sum_{\ell=1}^\infty v_\ell(t)$:
\begin{equation}\label{smol_intro}
\frac{\mathrm{d}}{\mathrm{d}t} v_k(t) = \frac{k}{2}\sum_{\ell=1}^{k-1}v_\ell(t)v_{k-\ell}(t) - kv_k(t) \sum_{\ell=1}^\infty v_\ell(t),\quad k\ge 1.
\end{equation}
 The solutions to the Flory and Smoluchowski equations coincide until the \emph{gelation time} $t_{\mathrm{gel}}:= (\sum_{k=1}^\infty k v_k(0))^{-1}$,
 moreover $\sum_{\ell=1}^\infty v_\ell(t)=\sum_{\ell=1}^\infty v_\ell(0)=1$ holds for all $t \in [0, t_{\mathrm{gel}}]$.
  Beyond $t_{\mathrm{gel}}$ the evolution depends, informally, on whether small blocks are allowed to interact with the so-called \emph{gel}, which has mass $\sum_{\ell=1}^\infty v_\ell(0)-\sum_{\ell=1}^\infty v_\ell(t)$ at time $t$. In both cases, however, the total mass $\sum_{k=1}^\infty v_k(t)$ of small components is strictly decreasing for $t\in[t_{\mathrm{gel}},\infty)$.

 The mean field frozen percolation model  \cite{Rath} is defined exactly like the MFFF (c.f.\ Definition \ref{def:graphical_mfff}), only differing in that when a connected component is struck by lightning, the \emph{vertices} of the component get deleted forever. Vertices that have not yet been deleted are called \emph{alive vertices}.
 The number of alive vertices in the frozen percolation model decreases over time. The analogue of Proposition \ref{prop_RathToth_conv} holds in the
 frozen percolation model by \cite[Theorem 1.2]{Rath} (see also \cite[Chapter 4]{Yeothesis} for a shorter proof with weaker assumptions on the initial state), where the limiting component size densities $\left( v_k(t) \right)_{k=1}^\infty$ solve
 Smoluchowski's coagulation equations \eqref{smol_intro}.

The solution of \eqref{smol_intro} is known to be unique and explicit for general initial conditions, see \cite{NormandZambotti} and also \cite[\S 2]{Rath}. The frozen percolation model shares the feature of SOC with the MFFF, i.e. \eqref{v_k_polynomial_decay} holds  for the solution of \eqref{smol_intro} with
$\varphi(t)=-\frac{\mathrm{d}}{\mathrm{d}t}\sum_{\ell=1}^\infty v_\ell(t) $; see \cite[Theorem 1.5]{Rath}. One way to approximate SOC is to allow tiny giant clusters
to grow and then destroy them successively (see \cite[Theorems 1.4, 1.9]{Rath}), and the multiplying factor $1/2$ mentioned at the end of Remark \ref{remark_phi_t_giant_speed} is explained in \cite[Section 7]{Rath}, for an equivalent approximation of SOC in thc context of mean field frozen percolation. Roughly speaking, the model spends half of its time in a slightly subcritical state (recovering from the destructions), while the other half is spent in a slightly supercritical state (growing tiny giants).

\medskip

Another closely related random graph model where \eqref{smol_intro} describes the limiting component size densities was proposed by Aldous in \cite[\S 5.5]{Aldous_frozen} and
  studied by Merle and Normand
 in \cite{merle_normand}. Two connected components of size $k$ and $l$
merge at rate $\frac{kl}{n}$ (just as in Definition \ref{def:graphical_mfff}) and  connected components disappear if their size exceeds
a threshold  $\omega(n)$ satisfying $1 \ll \omega(n) \ll n$.
 \cite[Theorem 1.1]{merle_normand}   states that $\underline{v}^n(t)$ converge to the solution $\underline{v}(t)$ of  \eqref{smol_intro}.
 Note that this result is a special case of the main result of  \cite{fournier_laurencot_smol},
where
 discrete models of Smoluchowski's coagulation equations with more general coagulation kernels are studied.

Similarly to our Theorem \ref{thm_local_limit_mfff}\eqref{local_weak_limit_in_ffm}, it is shown in \cite[Theorem 1.3]{merle_normand} that the local weak limit of the ``$\omega(n)$-threshold deletion'' graph model, started from the empty graph on $n$ vertices, is a Galton--Watson branching process tree with $\mathrm{Poisson}(1 \wedge t)$ offspring distribution at time $t$.
 Also note that in this case the fraction of alive vertices at time $t$ converges in probability to $1 \wedge 1/t$ as $n \to \infty$ by \cite[Theorem 1.2]{merle_normand}.

 \subsubsection{Frozen percolation on inhomogeneous random graphs}\label{subsubsection_frozen_IRG_Dominic}

The mean field frozen percolation model of \cite{Rath} started from an IRG is studied by the third author in \cite{Yeo}.
 Initially, each of the $n$ vertices has one of $k$ types, and the fraction of vertices with type $i$ is $\pi^n_i(0)$. At time zero, conditional on these types, a pair of vertices with types $i$ and $j$ is connected with probability
  $1-\exp\left( -\kappa_{i,j}/n \right)$ independently of other pairs, where $(\kappa_{i,j})_{i,j=1}^k$ (the kernel of the IRG)
  is some  symmetric matrix with strictly positive entries.
Analogously to our Theorem \ref{thm_mfff_inhomog},   if we   condition on the set of alive vertices at any time $t$ and their types, the graph is also an IRG with kernel
$(\kappa_{i,j}+t)_{i,j=1}^k$, see \cite[Proposition 7]{Yeo}.

 By \cite[Theorem 3]{Yeo} the vector $(\pi^n_i(t))_{i=1}^k$ of proportions of vertices of each type that remain alive at time $t$ has a limit $(\pi_i(t))_{i=1}^k$ as $n \to \infty$. Moreover, if $t \geq t_{\mathrm{gel}}$ then the vector $(\pi_i(t))_{i=1}^k$ obeys the differential equation
\begin{equation}\label{frozen_irg_ode}
 \frac{\mathrm{d}}{\mathrm{d}t} \pi_i(t)=- \varphi(t) \theta_i(t)\pi_i(t),
 \end{equation}
  where $\varphi(t)$ is the total destruction rate at time $t$ and $(\theta_i(t))_{i=1}^k$ is the right eigenvector corresponding to the principal eigenvalue $\lambda=1$
of the critical Perron--Frobenius matrix  $\left(  (\kappa_{i,j}+t)\pi_j(t) \right)_{i,j=1}^k$, normalized so that $\sum_{i=1}^k \theta_i(t) \pi_i(t) =1$.
(Note that the results in \cite{Yeo} are formulated using the left eigenvector $(\mu_i(t))_{i=1}^k$, where $\mu_i(t)=\theta_i(t)\pi_i(t)$.)
This result is  similar to our Theorem \ref{thm_age_pde}, but
the method of proof is quite different, as we now explain. A key ingredient of the approach of \cite{Yeo} is that the empirical distribution of the vertex types
in a very large component of the IRG seen at time $t$ is close to $(\theta_i(t)\pi_i(t))_{i=1}^k$  with high probability (see \cite[Section 4]{Yeo} for the precise statement and proof). Instead of proving a similar result for age-driven IRGs, for which the continuous type space presents considerable technical challenges, we derive Theorem \ref{thm_age_pde} by directly studying the fine properties of the local limit objects (such as the cluster growth process of \cite{CraneFreemanToth} and age-driven multitype branching process trees) that arise from the MFFF as $n \to \infty$ (see the introduction of
\S \ref{section_cluster_growth_age} for further details about our methods).

\subsubsection{A model with limited aggregations}

In \cite{merle_normand_2} a dynamical variant of the configuration model is discussed where
 edges matching remaining stubs are added one by one in a uniform fashion. Connected
 clusters get deleted as soon as their size exceeds a threshold $\alpha(n)$, where $n$ denotes the initial number of vertices and $1 \ll \alpha(n) \ll n$. Similarly to our Theorem \ref{thm_local_limit_mfff}, the local weak limit of this random graph model at any time $t$ after gelation turns out to be a critical Galton--Watson tree.

\subsection{Open questions}\label{subsection_open_q}

\begin{enumerate}
\item For each $n$, the evolution of the cluster size densities $\underline{v}^n(t)$ of the MFFF can be viewed as a coagulation-fragmentation process
(see \cite[Section 2.2]{RathToth}), which converges in distribution as $t \to \infty$ to $\underline{v}^n(\infty)$, which is distributed according to the unique stationary law of the MFFF. The critical forest fire equations (\eqref{smol_ff_eq}+\eqref{smol_bc}) themselves have a unique fixed point $\underline v(\infty)$, see Remark \ref{remarks_varphi}\eqref{remark_explicit_smol_stationary}.
 However, it has not yet been proved
 that the limit as $n \to \infty$ of $\underline{v}^n(\infty)$ is deterministic and equal to $\underline{v}(\infty)$, see \cite[Conjecture 1]{Crane}.

\item A related question is whether there exist non-constant periodic or chaotic solutions to the critical forest fire equations (\eqref{smol_ff_eq}+\eqref{smol_bc}).
  Indeed, it is not yet known whether we have $\underline{v}(t)\rightarrow \underline v(\infty)$ as $t\rightarrow\infty$, even in the \emph{monodisperse} case where $\underline v(0)=(1,0,0,\ldots)$. The analogous question for solutions to the age differential equations (\eqref{eq:ageDE}+\eqref{eq:phiexpression}), for which the expected limit $\pi$ is discussed in Remark \ref{remark_about_v_k_t_pi_t}\eqref{remark_statiorary_pi}, is also unknown. In both settings, the conjecture that $\varphi(t)\rightarrow\frac12$ as $t\rightarrow\infty$ is of central importance.


\item Is it possible to write down any explicit non-constant age-critical solutions of the age differential equations (\eqref{eq:ageDE}+\eqref{eq:phiexpression})?
  No explicit solutions of the critical forest fire equations (\eqref{smol_ff_eq}+\eqref{smol_bc}) are currently known beyond $t_{\mathrm{gel}}$, apart from the stationary solution $\underline{v}(\infty)$.

\item An age-critical or age-subcritical distribution $\pi$ determines a cluster size distribution $v_k=\mathbb{P}(|T^{\pi}|=k),\, k=1,2,\dots$. Not every such probability measure arises from an age distribution $\pi$. For example, the support of $(v_k)_{k=1}^\infty$ must be all of $\mathbb{N}$ if $\pi \neq \delta_0$. In the other direction, does $(v_k)_{k=1}^\infty$ uniquely determine $\pi$?
\end{enumerate}

\subsection{Overview of the contents of the rest of this paper}\label{subsection_overview}
In \S \ref{section_inhom} we connect the MFFFA and the notion of age-driven IRG to the literature of IRGs and their local weak limits.
In \S \ref{section_analysis-1} we discuss fundamental properties of the branching operator and age-driven MBPs.
In \S \ref{sec_properties_multitype} we state and prove some results about the fine asymptotics of the probability generating functions of the
size of age-driven MBP trees. In \S \ref{section_cluster_growth_age} we prove the main results of this paper.
We strongly encourage the reader interested in our methods to read the introduction of \S \ref{section_cluster_growth_age} now: in \S \ref{subsection_cgp_w_a} we
recall and introduce some auxiliary Markov processes (e.g.\ the \emph{cluster growth process with age}) which allow us to describe the asymptotics of the time evolution of the empirical age distribution in the MFFFA. Some properties of these auxiliary Markov processes  stated in \S \ref{subsection_cgp_w_a} (e.g.\ the \emph{intertwining relation} between them) are interesting in their own right.

\section{Age-driven IRGs and their limits}\label{section_inhom}

In \S \ref{subsection_ff_preserves_IRG} we prove Theorem \ref{thm_mfff_inhomog}, that the dynamics of the $\mathrm{MFFFA}$ preserve the class of age-driven IRGs.

 In \S \ref{subsection_local_weak_convergence} we recall some elements of the theory of IRGs from \cite{BollobasJansonRiordan} and \cite{van der Hofstad 2} and prove Proposition \ref{prop:localweakconv}.

 In \S \ref{subsection_weak_lim_two_c_s_a} we extend this to understand the limiting joint distribution of the ages and component sizes of two i.i.d.\ uniformly chosen vertices in a large age-driven IRG,  for the purpose of running a second moment argument.

\subsection{Forest fire dynamics preserve age-driven IRGs}
\label{subsection_ff_preserves_IRG}

\begin{proof}[Proof of Theorem \ref{thm_mfff_inhomog}] Without loss of generality we assume that $\underline{a}^n_0$ is deterministic, by conditioning on its value.

In order to prove \eqref{eq:mfff_inhom_rg} we will prove an even stronger statement: we
will prove that \eqref{eq:mfff_inhom_rg} holds even if we further condition
on the lightning PPP $\Lambda$. In practice, this means
that we assume that  $\Lambda$ consists of the deterministic points $(i_k,t_k), \, k = 1,2,\dots$, where
$0=t_0<t_1<t_2<\dots$, that is, vertex $i_k$'s component is burned at time $t_k$.

We will prove that \eqref{eq:mfff_inhom_rg} holds for all $t \leq t_k, \, k=0,1,2,\dots$  by induction
on $k$. By construction of $(\mathcal{G}^n_0,\underline{a}^n_0)$ (as an age-driven IRG), \eqref{eq:mfff_inhom_rg} holds for $k=0$.

Now let us assume that \eqref{eq:mfff_inhom_rg} holds for all $t \leq t_{k-1}$.
In particular, the conditional distribution of $(\mathcal{G}^n_{t_{k-1}},\underline{a}^n_{t_{k-1}})$ given $\underline{a}^n_s, \, 0 \leq s \leq t_{k-1}$  is an age-driven IRG. Since no point of $\Lambda$ lies in $[n]\times (t_{k-1},t_k)$, we have
\begin{equation}\label{eq:antk-1}
a^n_t(i)=a^n_{t_{k-1}}(i)+(t-t_{k-1}), \qquad t_{k-1} \leq t< t_k, \quad i \in [n].
\end{equation}
Edge $\{i,j\}$ is present in $\mathcal{G}^n_t$ either if it is present in $\mathcal{G}^n_{t_{k-1}}$, or if it appears during time interval $(t_{k-1},t]$. Using the induction hypothesis and the fact that edges appear independently at rate $1/n$ on this interval, we conclude that conditional on $(\underline{a}^n_s,\,0\le s\le t)$ and $\Lambda$, each edge $\{i,j\}$ is present in $\mathcal{G}^n_t$ independently with probability
\begin{multline}
 1-\exp\left(-\tfrac{1}{n}(a^n_{t_{k-1}}(i)\wedge a^n_{t_{k-1}}(j))\right) \exp\left(-\tfrac{1}{n}(t-t_{k-1})\right)\stackrel{\eqref{eq:antk-1}}
 =\\
 1-\exp\left(-\tfrac{1}{n}(a^n_t(i)\wedge a^n_t(j))\right).
\end{multline}

So \eqref{eq:mfff_inhom_rg} holds for any $t_{k-1} \leq t< t_k$, and if we denote by $\mathcal{G}^n_{t_{k}-}$ and $ \underline{a}^n_{t_{k}-} $ the graph and the age configuration that we see right before the lightning strike at time $t_k$, then
\begin{equation}\label{eq:mfff_inhom_rg_at_tk_minus}
 \text{conditional on $(\underline a^n_s,\,s\in[0,t_k))$, we have $\mathcal{G}^n_{t_{k}-}\stackrel{d}=\Gage(n,\underline{a}^n_{t_{k}-})$.}
\end{equation}

At time $t_{k}$, a lightning strikes vertex $i_{k}$ and the vertices of the connected component $\mathcal{C}$ of vertex $i_k$ in the graph
$\mathcal{G}^n_{t_{k}-}$
burn, thus we have
\begin{equation}\label{ages_after_lightning}
a^n_{t_k}(i)=\begin{cases} 0 & \text{ if } i \in \mathcal{C},\\
a^n_{t_{k}-}(i) = a^n_{t_{k-1}}(i)+(t_k-t_{k-1}) & \text{ if } i \in [n] \setminus \mathcal{C}.
\end{cases}
\end{equation}
Note that $\mathcal{C}$ is determined by $\underline{a}^n_s, \, 0 \leq s \leq t_k$
because $a^n_{t_k}(i)=0$ if and only if $i \in \mathcal{C}$. Also note that there are no edges between
$\mathcal{C}$ and $[n] \setminus \mathcal{C}$ in $\mathcal{G}^n_{t_{k}-}$, since $\mathcal{C}$ is a connected component of
$\mathcal{G}^n_{t_{k}-}$. Also note that it follows from \eqref{eq:mfff_inhom_rg_at_tk_minus} and \eqref{ages_after_lightning} that if we condition on $(\underline a^n_s,\,s\in[0,t_k])$ then the subgraph of $\mathcal{G}^n_{t_{k}-}$ spanned
by the vertices $[n] \setminus \mathcal{C}$ is an age-driven inhomogeneous random graph with ages $(a^n_{t_{k}}(i), i \in [n] \setminus \mathcal{C})$, i.e., independently for different edges, the edge $\{i,j\}\in\binom{[n] \setminus \mathcal{C} }{2}$ is present with probability
$1-\exp\left( -\frac{1}{n}( a^n_{t_{k}}(i)\wedge a^n_{t_{k}}(j)) \right)$. Also note that the subgraphs of $\mathcal{G}^n_{t_{k}-}$ and $\mathcal{G}^n_{t_{k}}$ spanned
by the vertices $[n] \setminus \mathcal{C}$ are the same, but if either
 $i$ or $j$ belongs to $\mathcal{C}$, the edge $\{i,j\}$ is not present in $\mathcal{G}^n_{t_k}$. Putting these observations together with
Definition \ref{def:agedrivenIRG} and \eqref{ages_after_lightning}, we obtain that  \eqref{eq:mfff_inhom_rg} holds for $t=t_k$.
This completes the proof of Theorem \ref{thm_mfff_inhomog}.
\end{proof}

\subsection{Local weak convergence of age-driven IRGs}
\label{subsection_local_weak_convergence}

The goal of \S \ref{subsection_local_weak_convergence} is to prove Proposition~\ref{prop:localweakconv}.

\smallskip

In order to apply the theory of IRGs established in \cite{BollobasJansonRiordan}, we recall the notion of graphical and irreducible kernels from
\cite[Definitions 2.7, 2.10]{BollobasJansonRiordan} (or, alternatively, from \cite[Definition 3.3]{van der Hofstad 2}).
In our case the \emph{ground space} $(\mathcal{S}, \mu)$ is $([0,\infty), \pi)$, where $\pi$ is a Borel probability measure on $[0,\infty)$, and the kernel is $\kappa(x,y)=x \wedge y$.
Our \emph{vertex space} (c.f.\ \cite[\S 2]{BollobasJansonRiordan}) is $\left([0,\infty), \pi, (\underline{a}^n)_{n \ge 1}\right)$, where $\underline{a}^n = (a^n(1),\ldots,a^n(n)) \in [0,\infty)^n$ is a sequence of (possibly random) ages in $G^n = G^{\textup{age}}(n,\underline{a}^n)$, which determines the (possibly random) empirical measure $\pi^n$. Our definition of $\pi^n \stackrel{\mathbb{P}}{\Rightarrow} \pi$ (c.f.\ Definition \ref{def_conv_of_measures_simple})
is one of the equivalent characterizations given in \cite[Lemma A.2]{BollobasJansonRiordan}.
Our particular definition of the conditional edge probabilities given the types (i.e., ages) of the vertices in an age-driven IRG (c.f.\ Definition \ref{def:agedrivenIRG})
is in line with  \cite[(2.6)]{BollobasJansonRiordan}.

\smallskip

 Consider $G^n \stackrel{(d)}{=} G^{\textup{age}}(n, \underline{a}^n)$
 and for any $B \ge 0$ define $a_B^n(i):=a^n(i) \wedge B$ for $i=1, \dots, n$. We may use these truncated ages to define an age-driven IRG $G_{(B)}^n \stackrel{(d)}{=} G^{\textup{age}}(n, \underline{a}_B^n)$, coupled such that $G_{(B)}^n$ is a subgraph of $G^n$.
 We also define the bounded continuous kernel $\kappa_B(x,y) = x \wedge y \wedge B$ and note that $G_{(B)}^n$ can be viewed as an IRG with types $\underline{a}^n$ and kernel $\kappa_B$. Let us also introduce the multitype Galton--Watson tree
$T_{(B)}^\pi$, which is defined in the same way as $T^\pi$ (c.f.\ Definition \ref{def:branchingprocess}) but using the kernel $\kappa_B$, so that the offspring of a vertex of type $s$ are given by a PPP with intensity $(s \wedge u \wedge B)\,d\pi(u)$.

We first show that the ``truncated" analogue of Proposition \ref{prop:localweakconv} holds.
\begin{lemma}[Truncated IRG locally converges to MBP]\label{lemma_truncated_kernel_graphical}
$G_{(B)}^n$ converges in probability in the local weak sense as $n \to \infty$ to  $T_{(B)}^\pi$.
\end{lemma}
  \begin{proof}
   First we check that $\kappa_B$ is a \emph{graphical kernel} in our vertex space: see conditions (i), (ii) and (iii) of \cite[Definition 2.7]{BollobasJansonRiordan}.
\begin{enumerate}[(i)]
 \item $\kappa_B$ is continuous.
 \item $\kappa_B$ is bounded, which implies $\int \int \kappa_B(x,y)\,d\pi(x)d\pi(y)<\infty$.
 \item The expected number of edges in $G^{\textup{age}}\left(n,\underline{a}^n\right)$ divided by $n$ tends to \\ $\frac{1}{2}\int\!\int \kappa_B(x,y) \,d\pi(x)\,d\pi(y)$ as $n \to \infty$, and this holds for bounded continuous kernels by \cite[Lemma 8.1 and Remark 8.4]{BollobasJansonRiordan}.
\end{enumerate}
Thus our kernel $\kappa_B$ is graphical. $\kappa_B$ is also irreducible (c.f.\ \cite[Definition 2.10]{BollobasJansonRiordan}), therefore the proof of Lemma \ref{lemma_truncated_kernel_graphical} follows from \cite[Theorem 3.11]{van der Hofstad 2}.
   \end{proof}
 \begin{remark}
To ensure that $\kappa$ itself is a graphical kernel we would have to assume some extra condition on $\pi^n$ in addition to its convergence in probability to $\pi$, e.g.\ $\mathbb{E}(\int x \,d\pi^n(x)) \to \int x\,d\pi(x)$  would suffice. Our truncation argument  allows us to avoid making such an extra assumption.
\end{remark}

 Let us denote by $\mathcal{S}^{n,k}_{(B)}$ the set of vertices $v$ of $G^n$ such that there is a vertex of age $B$ within distance $k$ of $v$ in $G_{(B)}^n$.
 Note that for any $v \in [n] \setminus \mathcal{S}^{n,k}_{(B)}$ the ball of radius $k$ about $v$ in $G^n$ agrees with that in $G_{(B)}^n$.
\begin{lemma}[$G_{(B)}^n$ and $G^n$ locally look similar]\label{L: tricky lemma} Let $\epsilon > 0$ and $k \in \mathbb{N}$. Then, there exists a constant $B=B(\epsilon,k)$ such that
for any  sufficiently large $n$ we have
\begin{equation}\label{S_n_r_B_bound}
 \mathbb{P}\left( |\mathcal{S}^{n,k}_{(B)}| \geq n \epsilon  \right)\leq  \epsilon \, .
 \end{equation}
\end{lemma}
\begin{proof}
 Consider the set $\textup{Path}(\ell,B)$ that consists of directed simple paths in $G_{(B)}^n$ of length $\ell$, beginning in the set $S = \{v \in [n] \,:\, a_B^n(v) = B\}$.
 Conditional on the ages, the probability that any particular directed path $(v_0, v_1, \dots, v_\ell) \in \textup{Path}(\ell,B)$  is present in $G_{(B)}^n$ is bounded as follows:
\begin{multline*}
 \mathbb{P}\left[ \{v_{i-1},v_i\} \in E(G_{(B)}^n): 1 \le i \le \ell \,\Big|\, \underline{a}_B^n \right] = \\
  \prod_{i=1}^\ell \left( 1-e^{-(a_B^n(v_{i-1}) \wedge a_B^n(v_i))/n}\right) \leq
   \prod_{i=1}^\ell \frac{a_B^n(v_{i-1}) \wedge a_B^n(v_i)}{n}   \le  \prod_{i=1}^\ell \frac{a_B^n(v_i)}{n}\,.
 \end{multline*}
 Summing over all choices of $v_0 \in S$ and $(v_1, \dots, v_\ell) \in [n]^{\ell}$, we obtain
$$\mathbb{E}\left[|\textup{Path}(\ell,B)|\,\Big|\, \underline{a}_B^n\right] \le |S|  \left( \sum_{v \in [n]} \frac{a_B^n(v)}{n}\right)^\ell
=|S|  \left( \int (x \wedge B)\, d\pi^n(x)  \right)^\ell
\,.$$
Note that $\mathcal{S}^{n,k}_{(B)}$ is the set of vertices whose $k$-ball in $G_{(B)}^n$ meets $S$,
thus
 \begin{equation}\label{condbound_Y_n_Z_n}
  \mathbb{E}\left[  |\mathcal{S}^{n,k}_{(B)}|/n   \,|\, \underline{a}_B^n\right]\leq
\frac{|S|}{n} \sum_{\ell=0}^k \left( \int (x \wedge B)\, d\pi^n(x)  \right)^\ell=:Z_n \, .
\end{equation}
If $\pi(\{B\})=0$ then $Z_n$ converges to
$ \pi([B,\infty)) \sum_{\ell=0}^k \left(\int (x\wedge B) \,d\pi(x)\right)^\ell$ in probability, and the latter quantity is at most $\epsilon^2/4$ if we choose $B$ big enough, since
$\int x \,d\pi(x)<\infty$. Then for large enough $n$ we have
$\mathbb{P}(Z_n > \frac{\epsilon^2}{2})\leq \frac{\epsilon}{2}$, moreover $\mathbb{P}( |\mathcal{S}^{n,k}_{(B)}|/n \geq \epsilon \, | \, Z_n \leq \frac{\epsilon^2}{2} )\leq \frac{\epsilon}{2}$ holds by \eqref{condbound_Y_n_Z_n}, thus
  \eqref{S_n_r_B_bound} follows.
\end{proof}

Next we observe that $T_{(B)}^\pi$ converges to $T^\pi$ in the local weak sense as $B \to \infty$: this follows from the fact that $\kappa_B \nearrow \kappa$,
therefore one can couple $T_{(B)}^\pi$ simultaneously for all $B\geq 0$ to $T^\pi$ so that $T_{(B)}^\pi$  converges monotonically (with respect to the partial ordering of graph inclusion) to $T^\pi$ almost surely (for the details of this coupling, see the proof of \cite[Theorem 3.10]{van der Hofstad 2}).

\smallskip

Now we can conclude the proof of Proposition~\ref{prop:localweakconv}. Let us fix some finite rooted graph $(H,v)$ and $k \in \mathbb{N}$. Recalling  Definition \ref{def:localweakconv}, let $\mathcal{B}^n_k(w)$ be the $k$-neighborhood in $G^n$ of a vertex $w\in[n]$, and similarly let $\mathcal{B}^n_{k,(B)}(w)$ be the $k$-neighborhood in $G^n_{(B)}$ of $w$.
Let $X_n:=\frac{1}{n}\sum_{w\in[n]} \mathds{1}[ \mathcal{B}^n_k(w)\simeq (H,v)]$ and $X_{n,(B)}:=\frac{1}{n}\sum_{w\in[n]} \mathds{1}[ \mathcal{B}^n_{k,(B)}(w)\simeq (H,v) ]$. Let $q:=\Prob{\mathcal{B}_k(\rho) \simeq (H,v)}$ and $q_{(B)}:=\Prob{\mathcal{B}_{k,(B)}(\rho) \simeq (H,v)}$, where $\mathcal{B}_k(\rho)$ and $\mathcal{B}_{k,(B)}(\rho)$ denote the $k$-neighborhoods of the root in $T^\pi$ and $T_{(B)}^\pi$, respectively.

Given some $\epsilon>0$,  we want to show that for all large enough $n$ we have $\mathbb{P}(|X_n -  q| \geq \epsilon) \leq  \epsilon$.
Recall that for any $v \in [n] \setminus \mathcal{S}^{n,k}_{(B)}$ we have $\mathcal{B}^n_k(v) = \mathcal{B}^n_{k,(B)}(v)$,
thus by Lemma \ref{L: tricky lemma} we can choose $B^*=B^*(\epsilon,k)$ such that if $B \geq B^*$ then we have $\mathbb{P}(|X_n-X_{n,(B)}| \geq \frac{\epsilon}{3} ) \leq \frac{\epsilon}{2}$ for large enough values of $n$.
There exists $B^{**}$ such that $|q-q_{(B)}|\leq \frac{\epsilon}{3}$ for any $B\geq B^{**}$, since $T_{(B)}^\pi$ converges to $T^\pi$ in the local weak sense as $B \to \infty$. Let $B=B^* \vee B^{**}$.
  By Lemma \ref{lemma_truncated_kernel_graphical} we have $\mathbb{P}(|X_{n,(B)} -  q_{(B)}| \geq \frac{\epsilon}{3}) \leq \frac{\epsilon}{2}$ for large enough $n$.
  We obtain the desired $\mathbb{P}(|X_n -  q| \geq \epsilon) \leq  \epsilon$ for large enough $n$ by the triangle inequality and the union bound.
 The proof of Proposition~\ref{prop:localweakconv} is complete.

\subsection{Joint weak limit of component sizes and ages}
\label{subsection_weak_lim_two_c_s_a}
In order to prove that $\pi^n_t$ is concentrated around $\pi_t$ in Theorem \ref{thm_convergence} using a second moment argument, we will show in Proposition \ref{2rhocvg} that the age processes $a^n_t(\rho^n_1)$ and $a^n_t(\rho^n_2)$ of two i.i.d.\ uniformly distributed vertices $\rho^n_1,\rho^n_2$ in an MFFFA evolve asymptotically independently as $n\rightarrow\infty$. The following result implies that the ages of $\rho^n_1$ and $\rho^n_2$ in the initial graph $\mathcal{G}^n_0$ are asymptotically independent.

\begin{definition}[Components and cardinalities]\label{def_component_and_component_size}
Given an age-driven IRG $\Gage(n,\underline{a}^n)$ (c.f.\ Definition \ref{def:agedrivenIRG}) and a vertex $i \in [n]$, let $\mathcal{C}^n(i)$ denote the connected component of vertex $i$ in  the graph
$\Gage(n,\underline{a}^n)$, moreover let $C^n(i)$ denote the number of vertices in $\mathcal{C}^n(i)$, i.e., $C^n(i)=|\mathcal{C}^n(i)|$.
\end{definition}

\begin{proposition}[Weak limits for two i.i.d.\ vertices]\label{2agecvgprop}
Let $\Gage(n,\underline{a}^n)$ be a sequence of age-driven IRGs, for which the empirical age distributions satisfy $\pi^n\stackrel{\mathbb{P}}{\Rightarrow} \pi$, where $\int x\,d\pi_0(x) < \infty$ and  $\pi$ is an age-subcritical or age-critical probability measure.
 Let $\rho^n_1,\rho^n_2$ be i.i.d.\ with uniform distribution on $[n]$. Then as $n\rightarrow\infty$,
\begin{equation}\label{eq:2initialagescvg}
\left( \left(a^n(\rho^n_i),\,C^n(\rho^n_i)\right), i\in \{1,2\} \right)
\;\Rightarrow\;
\left( \left(a(\rho^{(i)}),|T^{(i)}| \right), i\in \{1,2\} \right)\, ,
\end{equation}
where $\left(a(\rho^{(i)}),|T^{(i)}| \right),\, i=1,2$ are i.i.d.\ copies
of $(a(\rho),|T^\pi|)$, where $\rho$ is the root of $T^\pi$ and $a(\rho)$ is the age of the root.
\end{proposition}
Note that when $\pi$ is age-supercritical then $\mathbb{P}(|T^\pi|=\infty )>0$ (c.f.\ Proposition \ref{prop_multitype_trichotomy}), and therefore $(C^n(\rho^n_i))_{n \in \mathbb{N}}$ is not tight.

Let us now explain why we omit the proof of Proposition \ref{2agecvgprop}.
The statement of Proposition \ref{prop:localweakconv} concerns only distributions of rooted graphs, not graphs augmented with ages. However, the proof of
\cite[Theorem 3.11]{van der Hofstad 2} (which we used in the proof of Lemma \ref{lemma_truncated_kernel_graphical}) at its core uses a coupling argument in which an exploration of $G^n$ from a uniformly chosen vertex is coupled to an exploration of $T^\pi$ from its root. In this coupling the ages of corresponding vertices are in fact close with high probability. In the discussion in \cite[\S 3.5.1]{van der Hofstad 2}, immediately following the statement of the relevant Theorem 3.11, van der Hofstad remarks that the convergence in probability in the local weak sense can be extended to apply to the topology where two local subgraphs are close only if they are isomorphic as rooted graphs by an isomorphism under which the corresponding vertices' types are uniformly close.  This is similar to Benjamini et al \cite{BenjaminiLyonsSchramm}, who study a setting with marked edges. 
We actually do not require the full power of this extension, only the fact that the initial step of the coupling argument is to couple a uniformly chosen random vertex $\rho^n$ of $G^n$ with the root $\rho$ of $T^\pi$ in such a way that with high probability the age of $\rho^n$ is close to the age of $\rho$.
This observation can be used to show that in fact for any continuity set $A$ of the measure $\pi$ and any finite rooted graph $(H,v)$ and any $k \ge 1$ we have as $n \to \infty$ that \begin{equation}\label{eq:localweakprob2}
\frac{1}{n}\sum_{w\in[n]} \mathds{1}[\, \mathcal{B}^n_k(w)\simeq (H,v),\, a^n(w) \in A\, ] \;\stackrel{\mathbb{P}}\longrightarrow \; \Prob{\mathcal{B}_k(\rho) \simeq (H,v),\, a(\rho) \in A}.
\end{equation}
In order to prove \eqref{eq:2initialagescvg}, we only need to show that for any pair of continuity sets $A_1, A_2$ of $\pi$ and finite rooted graphs $(H_1,v_1), (H_2,v_2)$, we have
\begin{multline}\label{weak_pair_conv_want}
  \mathbb{P}\left( \mathcal{B}^n_k(\rho^n_i)\simeq (H_i,v_i), a^n(\rho^n_i) \in A_i, i \in \{1,2\} \right) \to \\
  \prod_{i=1}^2  \mathbb{P}\left( \mathcal{B}_k(\rho)\simeq (H_i,v_i), a(\rho) \in A_i \right)
\end{multline}
as $n$ goes to infinity. Taking the product of two instances of \eqref{eq:localweakprob2} with $A_i$ and $(H_i,v_i)$ in place of $A$ and $(H,v)$ (with $i=1,2$, respectively), we obtain that
\begin{equation}\label{conv_prob_sum_pair}
  \frac{1}{n^2}\sum_{w_1,w_2 =1}^n  \mathds{1}[ \, \mathcal{B}^n_k(w_i)\simeq (H_i,v_i),\, a^n(w_i) \in A_i,\, i\in \{1,2\}\, ]
\end{equation}
converges in probability to the r.h.s.\ of \eqref{weak_pair_conv_want}.
Taking the expectation,  we obtain  \eqref{weak_pair_conv_want}.

\section{The operator $\mathcal{L}_\pi$ and criticality of $T^\pi$}
\label{section_analysis-1}

In \S\ref{subsection_L_op_prop} we prove (a strengthening of) Lemma \ref{lemma_L_operator_basic_properties}. In \S \ref{subsec_multitype_offspr_decay} we prove Proposition \ref{prop_multitype_trichotomy}.

\subsection{Basic properties of $\mathcal{L}_\pi$}
\label{subsection_L_op_prop}

The main goal of \S \ref{subsection_L_op_prop} is to prove Lemma \ref{lemma_L_operator_basic_properties}. In fact, we will prove a more detailed result.

Let $(X,\mu)$ be a measure space, let $\kappa$ be a measurable real kernel on $X$ and define an integral operator $T$ by $(T f)(x) = \int \kappa(x,y) f(y)\,d\mu(y)$.  Then the Hilbert--Schmidt norm of $T$ is
\begin{equation}\label{hs_norm_def}
\|T\|_{\mathrm{HS}} = \left(\int\int \kappa(x,y)^2 \,d\mu(x)\,d\mu(y)\right)^{1/2}.
\end{equation}
When $\|T\|_{\mathrm{HS}}$ is finite, $T$ is a  \emph{Hilbert--Schmidt operator} on $L^2(\mu)$. It is compact (see \cite[Theorem VI.22]{reed_simon}) with operator norm bounded by
\begin{equation}\label{HS_op_ineq}
\|T\| \leq \|T\|_{\mathrm{HS}}.
\end{equation}


\begin{lemma}\label{L: finite mean implies trace-class}
If $\int s\,d\pi(s) < + \infty$ then $\mathcal{L}_\pi$ is Hilbert--Schmidt, and
\begin{equation}\label{op_norm_mean_ineq}
 \|\mathcal{L}_\pi\| \leq  \|\mathcal{L}_\pi\|_{\mathrm{HS}} \le  \int x\,d\pi(x) < \infty\,.
 \end{equation}
\end{lemma}
\begin{proof}
If we let  $m:=\int x\,d\pi(x) < \infty$ then we have
\begin{equation}\label{HS_mean_ineq}
\|\mathcal{L}_\pi\|_{\mathrm{HS}}^2 \stackrel{ \eqref{hs_norm_def} }{=} \int\!\int (x \wedge y)^2 \,d\pi(x)\,d\pi(y) \le   \int\!\int x  y \,d\pi(x)\,d\pi(y) = m^2\,.
\end{equation}
Thus  \eqref{op_norm_mean_ineq}
holds by \eqref{HS_op_ineq} and \eqref{HS_mean_ineq}.
\end{proof}

\begin{remark}In fact \cite[Thm 4.6]{Aleksandrov} shows (after a change of variables) that $\mathcal{L}_\pi$ is a bounded operator on $L^2(\pi)$ if and only if $\pi([x,\infty)) = O(1/x)$ as $x \to \infty$, it is compact if and only if $\pi([x,\infty)) = o(1/x)$ as $x \to \infty$, and it belongs to the trace class if and only if $\int x\,d\pi(x) < \infty$. In the latter case one can show that the sum of the eigenvalues converges (absolutely) to $\int x \,d\pi(x)$. See also \cite[Example 17.6]{BollobasJansonRiordan}.
\end{remark}

\begin{lemma}[Properties of $\mathcal{L}_\pi$]\label{L: operator properties}
If $\|\mathcal{L}_\pi\|_{\mathrm{HS}}<\infty$ and $\pi \neq \delta_0$ then
\begin{enumerate}[(i)]
\item \label{L:self_adjoint_pos} $\mathcal{L}_\pi$ is a positive semidefinite compact self-adjoint operator on $L^2(\pi)$.
\item \label{L:Lipsch} Elements of the image of $\mathcal{L}_\pi$ are represented by Lipschitz functions and $\mathcal{L}_\pi$ maps non-negative functions to non-decreasing non-negative functions.
\item \label{L:principal}
$\mathcal{L}_\pi$ has a simple principal eigenvalue $\lambda$ satisfying $0 < \lambda = \|\mathcal{L}_\pi\|$.
\item \label{L:eigen} There is a unique eigenfunction $\theta\in L^2(\pi)$ for which $\mathcal{L}_\pi \theta=\lambda \theta$ and $\int\! \theta(x)\mathrm{d}\pi(x)=1$. We identify $\theta$ with its Lipschitz-continuous representative, which is a non-decreasing function on $[0,\infty)$ with $\theta(0) = 0$.
\end{enumerate}
\end{lemma}

\begin{proof}
We begin with the proof of \eqref{L:self_adjoint_pos}.
$\mathcal{L}_\pi$ is self-adjoint because the kernel $x \wedge y$ is real and symmetric.
To see that $\mathcal{L}_\pi$ is a positive semidefinite operator, note that for $f,g \in L^2(\pi)$ we have
\begin{eqnarray*}\langle \mathcal{L}_\pi f, g \rangle_\pi & = & \int\!\int f(y)g(x) (x \wedge y) \,d\pi(x) \,d\pi(y) \\
 & = & \int\!\int\!\int_0^\infty f(y) g(x)\mathds{1}[x \ge u, \,y \ge u] \,du\,d\pi(x)\,d\pi(y) \\ &
 \stackrel{(*)}{=} & \int_0^\infty\left(\int_u^\infty f(y) \,d\pi(y)\right) \left(\int_u^\infty g(x)\,d\pi(x)\right)\,du.
\end{eqnarray*}
The application of Fubini in the equation marked by $(*)$ is justified by the absolute integrability which follows from
$\langle \mathcal{L}_\pi |f|, |g|\rangle_\pi \leq \Vert \mathcal{L}_\pi \Vert \Vert f \Vert \Vert g \Vert  < \infty$, see \eqref{HS_op_ineq}. In particular we have
$ \langle \mathcal{L}_\pi f, f \rangle_\pi  = \int_0^\infty \left( \int_u^\infty f(x) \,d\pi(x) \right)^2 \,du \ge 0.
$

Next we prove \eqref{L:Lipsch}. For any $f \in L^2(\pi)$, $\mathcal{L}_\pi f$ is represented by a Lipschitz function. Indeed, since $\pi$ is a finite measure, the constant function $\mathbf{1}$ belongs to $L^2(\pi)$, and because $|(u \wedge s) - (u \wedge s')| \le |s - s'|$, we have
$$ |\mathcal{L}_\pi f(s) - \mathcal{L}_\pi f(s')|  \le  \int |f(u)|\, |(u \wedge s) - (u \wedge s')| \,d\pi(u) \le |s-s'|\langle|f|,\mathbf{1}\rangle_\pi\,.$$
Moreover, if $f$ is a non-negative function and   $0 \le s \le s'$ then
$$ 0 \leq  \mathcal{L}_\pi f(s) = \int f(u) (u \wedge s) \,d\pi(u) \le \int f(u) (u \wedge s') \,d\pi(u) = \mathcal{L}_\pi f(s')\,.$$

Next we prove \eqref{L:principal}.   Because $\mathcal{L}_\pi$ is positive semidefinite and self-adjoint, and because Hilbert--Schmidt operators are compact, the spectrum of $\mathcal{L}_\pi$ is contained in the real interval $[0,\|\mathcal{L}_\pi\|]$, with $0$ being the only possible accumulation point and every positive eigenvalue having finite multiplicity. Except in the trivial case $\mathcal{L}_\pi = 0$, which only occurs when $\pi = \delta_0$, the Perron--Frobenius theorem for Hilbert--Schmidt integral operators (see \cite[\S 6.5]{Mode} or \cite[Lemma 5.15]{BollobasJansonRiordan}) guarantees that the leading eigenvalue is $\lambda = \|\mathcal{L}_\pi\|$, and it is simple, i.e. has a one-dimensional eigenspace.

Next we prove
\eqref{L:eigen}. \cite[Lemma 5.15]{BollobasJansonRiordan} shows that there is an eigenfunction $\theta \in L^2(\theta)$ with eigenvalue $\lambda$ such that $\theta$ is positive $\pi$-a.e.~ and since $\int \theta \,d\pi = \langle \theta, \mathbf{1}\rangle_\pi < \infty$ it is valid to normalize $\theta$ by multiplying it by a non-zero scalar so that $\int \theta \,d\pi = 1$. We have
\begin{equation}
\label{E: leading eigenfunction}
\theta(s) = \lambda^{-1} \mathcal{L}_\pi\theta(s) = \lambda^{-1} \int_0^\infty \theta(u) \,(u \wedge s) \,d\pi(u)\,, \qquad s \in [0,+\infty)
\end{equation}
 and we now choose the representative for $\theta$ defined by the right-hand side of~\eqref{E: leading eigenfunction} which is non-decreasing and Lipschitz in $s$ by \eqref{L:Lipsch}, and satisfies $\theta(0) = 0$ and $\theta(s) > 0$ for  $s > 0$.
\end{proof}

The proof of Lemma \ref{lemma_L_operator_basic_properties} immediately follows from Lemmas \ref{L: finite mean implies trace-class} and \ref{L: operator properties}.

\subsection{Subcriticality, criticality and supercriticality of $T^\pi$}
\label{subsec_multitype_offspr_decay}

\S\ref{subsec_multitype_offspr_decay} is devoted to the proof~of Proposition \ref{prop_multitype_trichotomy}. Note that
we will only use the properties of $\mathcal{L}_\pi$ derived in Lemma \ref{L: operator properties}, so our proof works if we only assume
 $\|\mathcal{L}_\pi\|_{\mathrm{HS}}<\infty$ and $\pi \neq \delta_0$ instead of $\int s\,d\pi(s) < + \infty$.

Denote by $\lambda=\|\mathcal{L}_\pi\|$ the principal eigenvalue of  $\mathcal{L}_\pi$, see Lemma~\ref{L: operator properties}\eqref{L:principal}.
It follows from \cite[Theorem 6.1]{BollobasJansonRiordan} that $\|\mathcal{L}_\pi\|_{\mathrm{HS}}<\infty$ implies that $\lambda>1$ if and only if $\Prob{|T^\pi|=\infty}>0$. Therefore it suffices to prove:
\begin{equation}\label{eq:critdichotomy}
\begin{cases} \lambda<1 \; &\Rightarrow\quad \E{|T^\pi|}<\infty,\\ \lambda=1\quad&\Rightarrow\quad \E{|T^\pi|}=\infty.\end{cases}
\end{equation}

 Note that the expected number of vertices at distance $k$ from the root of the tree $T^\pi$ is given by $\langle\mathbf{1},\mathcal{L}_\pi^k\mathbf{1}\rangle_\pi$, thus
\begin{equation}\label{iterated_L_expect}
\E{|T^\pi|}=\sum_{k=0}^\infty \langle\mathbf{1},\mathcal{L}_\pi^k\mathbf{1}\rangle_\pi.
\end{equation}
Using this we can prove that $\lambda<1$ implies $\E{|T^\pi|}<\infty$:
\begin{equation}
\E{|T^\pi|} \stackrel{ \eqref{iterated_L_expect} }{\leq}
\sum_{k=0}^\infty \| \mathbf{1} \| \cdot \|  \mathcal{L}_\pi^k\mathbf{1} \| \leq
\sum_{k=0}^\infty  \| \mathbf{1} \|^2 \| \mathcal{L}_\pi^k \|  \leq \sum_{k=0}^\infty \langle\mathbf{1},\mathbf{1}\rangle_\pi \lambda^k =
  \frac{1}{1-\lambda}.
\end{equation}

We now treat the critical case $\lambda=1$.
Recall from Lemma~\ref{L: operator properties}\eqref{L:eigen} that in the $\lambda=1$ case $\theta$ satisfies $\langle \mathbf{1}, \theta \rangle_\pi=1$ and $\theta= \mathcal{L}_\pi \theta$.
 Let us write $\mathbf{1}=a \theta + f$, where $a=\langle \mathbf{1}, \theta \rangle_\pi /\langle \theta, \theta \rangle_\pi > 0$, so that
 $f \in \theta^\perp$. Using that $\mathcal{L}^k_\pi$ is self-adjoint for any $k\geq 0$ (c.f.\ Lemma~\ref{L: operator properties}\eqref{L:self_adjoint_pos}), we obtain
\begin{equation}
\E{|T^\pi|}\stackrel{ \eqref{iterated_L_expect} }{=}
a^2 \sum_{k=0}^\infty \langle \theta,\mathcal{L}_\pi^k \theta \rangle_\pi + \sum_{k=0}^\infty \langle f,\mathcal{L}_\pi^k f \rangle_\pi,
\end{equation}
where the first sum on the r.h.s.\ is infinite since $\mathcal{L}^k_\pi \theta=\theta$, while the second sum is non-negative, since $\mathcal{L}_\pi^k$ is positive semidefinite, c.f.\ Lemma~\ref{L: operator properties}\eqref{L:self_adjoint_pos}.
It then follows that $\E{|T^\pi|}  = \infty$. The proof of Proposition \ref{prop_multitype_trichotomy} is complete.

\section{Multitype Galton--Watson trees and generating functions}\label{sec_properties_multitype}

 In this section we derive further properties of the multitype Galton--Watson trees $T^\pi$ and $T^\pi_s$ defined using a probability measure $\pi$ on $[0,\infty)$ that satisfies $\int u \,d\pi(u) < \infty$, see Definition \ref{def:branchingprocess}. Recall from \eqref{L_def_statement} that we define the Perron--Frobenius operator $\mathcal{L}_\pi$ acting on $L^2(\pi)$ by
\begin{equation}
\label{E: Perron-Frobenius operator definition}
\mathcal{L}_\pi f (x) = \int_0^\infty f(s) (x \wedge s) \,d\pi(s)\,.
\end{equation}
Recall from Definition \ref{L: operator properties}\eqref{L:eigen} the notion of the eigenfunction $\theta$  corresponding to the principal
eigenvalue $\lambda$ of $\mathcal{L}_\pi$.
 In the age-critical case (see Proposition \ref{prop_multitype_trichotomy}), let us define
\begin{equation}\label{phi_theta_cube_int_def}
\phi := \left(\int \theta(u)^3 \,d\pi(u)\right)^{-1}\,.
\end{equation}

One of the main results of \S \ref{sec_properties_multitype} is the next lemma.

\begin{lemma}[Critical generating function asymptotics]\label{lemma_tree_s_gen_fn_asymp}
Let $\pi$ be an age-critical measure satisfying $\int u \,d\pi(u) < \infty$. Then on the intersection of the complex open unit disc $\mathbb{D}$ with a neighborhood of $1$ the generating function $\mathbb{E}\left(z^{|T^\pi_s|}\right)$ agrees with an analytic function of a branch of $\sqrt{1-z}$ that depends continuously on $s \in [0,\infty]$. In particular, as $z \nearrow 1$ through the reals, we have
\begin{equation}\label{f_s_expansion}
 \mathbb{E}\left(z^{|T^\pi_s|}\right) = 1 - \sqrt{ 2\phi }\, \theta(s) \sqrt{1-z} + O(1-z)\,,
 \end{equation}
where the implied constant in the error term is uniform as a function of $s$, and also
\begin{equation}\label{f_T_expansion}
\mathbb{E}\left(z^{|T^\pi|}\right) = 1 - \sqrt{2\phi}\,\sqrt{1-z} + O(1-z)\,.
\end{equation}
\end{lemma}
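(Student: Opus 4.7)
The plan is to analyze the functional equation satisfied by $f(s,z) := \mathbb{E}\left(z^{|T_s^\pi|}\right)$ near $z=1$ via a Lyapunov--Schmidt reduction, taking $w := \sqrt{1-z}$ as the natural analytic coordinate. First I would derive the functional equation from the Poisson offspring structure in Definition~\ref{def:branchingprocess}: by the Campbell formula,
\[ f(s,z) = z\exp\!\left(-\int_0^\infty (1-f(u,z))(s\wedge u)\,\mathrm{d}\pi(u)\right) = z\exp(-\mathcal{L}_\pi(1-f(\cdot,z))(s)).\]
Setting $h := 1-f$ and $\epsilon := 1-z$ gives $1-h = (1-\epsilon)e^{-\mathcal{L}_\pi h}$; substituting $h = wH$ with $w=\sqrt{\epsilon}$ and expanding the exponential, one finds after dividing through by $w$ the equation
\[ (I - \mathcal{L}_\pi) H \;=\; w\,\Phi(H, w),\qquad \Phi(H, 0) = 1 - \tfrac{1}{2}(\mathcal{L}_\pi H)^2,\]
where $\Phi$ is jointly analytic in $(H, w)$ in a neighbourhood of $(\sqrt{2\phi}\,\theta,\,0)$.

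Next I would run Lyapunov--Schmidt against the non-trivial kernel of $I-\mathcal{L}_\pi$. Decompose $H = c\theta + \eta$ with $\eta$ in the orthogonal complement $\theta^\perp$ in $L^2(\pi)$. Applying the $\pi$-orthogonal projection onto $\theta^\perp$ and using that $(I - \mathcal{L}_\pi)|_{\theta^\perp}$ is invertible by the spectral gap in Lemma~\ref{L: operator properties}\eqref{L:spectralgap}, the analytic implicit function theorem produces an analytic function $\eta = \eta(c, w)$ with $\eta(c, 0) = 0$ for every $c$. Since $(I-\mathcal{L}_\pi)H$ is automatically $\pi$-orthogonal to $\theta$, the component of the equation along $\theta$ reduces, for $w \neq 0$, to the bifurcation equation
\[ \Xi(c, w) \;:=\; \langle \Phi(c\theta + \eta(c,w),\, w),\,\theta\rangle_\pi \;=\; 0.\]
At $w=0$, using $\mathcal{L}_\pi\theta = \theta$ and $\int\theta\,\mathrm{d}\pi = 1$, this reads $1 - \tfrac{c^2}{2}\int\theta^3\,\mathrm{d}\pi = 1 - \tfrac{c^2}{2\phi} = 0$, forcing $c_0 = \sqrt{2\phi}$; the transversality derivative is $\partial_c \Xi(c_0, 0) = -c_0/\phi \neq 0$, so a second application of the analytic implicit function theorem yields $c(w)$ analytic near $0$ with $c(0) = \sqrt{2\phi}$. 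Assembling, $H(\cdot, w) = c(w)\theta + \eta(c(w), w)$ is analytic in $w$ near $0$ with $H(\cdot, 0) = \sqrt{2\phi}\,\theta$, proving the claimed analytic extension in $\sqrt{1-z}$. Expansion~\eqref{f_s_expansion} follows directly, and \eqref{f_T_expansion} is obtained by integrating against $\pi$ and using $\int\theta\,\mathrm{d}\pi = 1$.

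The hard part will be choosing a Banach space in which the Lyapunov--Schmidt machinery both accommodates the nonlinearity and delivers the pointwise uniformity in $s \in [0,\infty]$ asserted by the lemma. The space $L^2(\pi)$ makes the spectral theory of $\mathcal{L}_\pi$ transparent (Lemma~\ref{L: operator properties}), but it does not support continuous pointwise products, which are needed to make sense of $(\mathcal{L}_\pi H)^2$ and of the exponential in $\Phi$. My preferred setting is the Banach space $C([0,\infty])$ of continuous functions on the one-point compactification, equipped with the sup norm; here $\mathcal{L}_\pi$ is a bounded operator (its output has limit $\int u f(u)\,\mathrm{d}\pi(u)$ at infinity by dominated convergence, since $\int u\,\mathrm{d}\pi < \infty$), pointwise multiplication is continuous, and the spectral gap on $\theta^\perp$ transfers from $L^2(\pi)$ by compactness arguments using the Lipschitz estimate of Lemma~\ref{L: operator properties}\eqref{L:Lipsch} together with Arzelà--Ascoli. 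A prerequisite is to verify that $\theta$ itself belongs to $C([0,\infty])$, i.e.\ that $\theta(\infty) = \int u\theta(u)\,\mathrm{d}\pi(u)$ is finite; this is the principal technical subtlety and I would handle it by exploiting the eigenvalue identity $\theta(s) = \mathcal{L}_\pi\theta(s)$ together with the Lipschitz bound $\theta(u) \le u$, reducing matters to the finite-mean hypothesis on $\pi$. With $\theta$ bounded, the $C([0,\infty])$-analytic solution $H(\cdot, w)$ produced above is continuous in $s$ and analytic in $w$, and the $O(1-z)$ term is bounded uniformly in $s$ as required.
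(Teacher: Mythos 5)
Your structural reduction is sound as far as it goes: writing $1-f(\cdot,z)=wH$ with $w=\sqrt{1-z}$, obtaining $(I-\mathcal{L}_\pi)H=w\,\Phi(\mathcal{L}_\pi H,w)$ with $\Phi(G,0)=1-\tfrac12 G^2$, and running Lyapunov--Schmidt with the spectral gap of Lemma~\ref{L: operator properties}\eqref{L:spectralgap} does produce an analytic branch $H(\cdot,w)=c(w)\theta+\eta(c(w),w)$ with $c(0)=\sqrt{2\phi}$, and the transversality computation is correct. The genuine gap is the identification step: the implicit function theorem gives existence of this branch and uniqueness only in a sup-norm neighbourhood of $\sqrt{2\phi}\,\theta$, and nothing in your argument shows that the probabilistic object $H_z:=(1-f(\cdot,z))/\sqrt{1-z}$ enters that neighbourhood (or is even uniformly bounded in $s$) as $z\nearrow 1$; an a priori estimate $1-f(s,z)=O(\sqrt{1-z})$ uniformly in $s$ is essentially the content of the lemma, so "the expansion follows directly" is circular. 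The same issue hides in the choice of the root $c_0=+\sqrt{2\phi}$ rather than $-\sqrt{2\phi}$. The paper's proof is built precisely to avoid this: it takes $w=\log f(\infty,z)$ as the uniformizing coordinate, so the unknown is attached to the probabilistic solution from the start, identifies $\log f(s,z)=F(s,w)$ via the uniqueness of bounded solutions of the final-value Volterra equation \eqref{E: F recursion} (Picard iteration plus Gr\"onwall, Lemma~\ref{L: final value problem}), and then inverts $\log z=F(0,w)=A_2(0)w^2+O(w^3)$ as a Puiseux series in $\sqrt{1-z}$. To rescue your route you would need a substitute for this identification, e.g.\ a uniform bound on $\|H_z\|_\infty$ together with a compactness argument showing every uniform subsequential limit is $c\theta$ with $c$ pinned to $\sqrt{2\phi}$ by pairing the equation with $\theta$; that is where the real work lies and it is absent from the proposal.

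There is a second, independent hole in the step you yourself flagged as the "principal technical subtlety". Your proposed proof that $\theta\in C([0,\infty])$ --- the Lipschitz bound $\theta(u)\le u$ combined with the finite-mean hypothesis --- does not suffice: it only yields $\theta(\infty)=\int u\,\theta(u)\,d\pi(u)\le \int u^2\,d\pi(u)$, i.e.\ it requires a second moment, which is not assumed. The finiteness (and positivity) of $\theta(\infty)$ under a first-moment assumption alone is exactly what the paper extracts from the Volterra machinery: the first Taylor coefficient $F_w(\cdot,0)$ of the solution of \eqref{E: F recursion} is bounded and continuous on $[0,\infty]$, lies in $\ker(I-\mathcal{L}_\pi)=\mathrm{span}(\theta)$, and satisfies $\int F_w(u,0)\,u\,d\pi(u)=1$, forcing $0<\theta(\infty)<\infty$ (Lemma~\ref{lemma_taylor}). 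Since your entire functional-analytic setting --- the space $C([0,\infty])$, the projection along $\theta$, and the invertibility of $I-\mathcal{L}_\pi$ on the complement --- is centred at $\sqrt{2\phi}\,\theta$, this prerequisite cannot be discharged by the argument you sketch, so the plan as written is incomplete on this point as well.
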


Lemma \ref{lemma_tree_s_gen_fn_asymp} is an ingredient of the proof of Theorem \ref{thm_age_pde}, see \S \ref{subsection_proof_of_age_pde}.

In \S \ref{subsection_gen_fn} we set up the notation required for the study of age-driven multitype branching processes using generating functions and write down a system of nonlinear equations satisfied by these generating functions. This leads to the study of a nonlinear Volterra equation of the second kind.

In \S\ref{subsection_volterra_well_posed} we prove that this nonlinear Volterra equation is well-posed.

In \S\ref{subsection_taylor_coefficiants} we prove Lemma \ref{lemma_tree_s_gen_fn_asymp}.

 In \S\ref{subsection_smol_uniqueness} we prove Proposition \ref{prop_Smol_uniqueness_simplified}.

The details of the proofs presented in
\S\ref{sec_properties_multitype} are independent of the rest of this paper, so the rest of \S \ref{sec_properties_multitype} can be skipped at first reading.

\subsection{The generating function of the total progeny}
\label{subsection_gen_fn}

Recall the notion of $T^\pi$ and $T^\pi_s$ from Definition \ref{def:branchingprocess}. In \S \ref{subsection_gen_fn} we assume
$\int u \,d\pi(u) < \infty$ and that $\pi$ is either
age-subcritical or age-critical (c.f.\ Proposition \ref{prop_multitype_trichotomy}).

Let us denote by $\overline{\mathbb{D}}$ the closed complex unit disc.

\begin{definition}[Generating function of $|T^{\pi}_s|$ and $|T^{\pi}|$]\label{def_gen_functions}
We write $[0,\infty]$ for the one-point compactification $[0,\infty) \cup \{\infty\}$.
  For $s \in [0,\infty]$ and $z \in \overline{\mathbb{D}}$ we define the probability generating functions
  \begin{equation}\label{f_s_z_def}
  f(s,z) := \mathbb{E}\left(z^{|T^\pi_s|}\right), \qquad
   f(z):= \mathbb{E}\left( z^{|T^{\pi}|} \right) =  \int f(s,z)\, \mathrm{d}\pi(s).
\end{equation}
 \end{definition}

\begin{lemma}[Basic properties of $f(s,z)$]\label{lemma_basic_gen_fn}

$ $

\begin{enumerate}[(i)]
\item\label{gen_fn_vi}    $f(s,z)$  satisfies $f(s,1)=1$ for all $s \in [0,\infty]$,
\item\label{gen_fn_i}
 $f(s,z) \in [0,1]$ if $z \in [0,1]$ and $f(s,z) \in \overline{\mathbb{D}}$ if $z \in \overline{\mathbb{D}}$,
\item\label{gen_fn_ii}
 $f(s,z)$  is strictly increasing in $z \in [0,1]$ for all $s \in [0,\infty]$,
 \item\label{gen_fn_iii}   for each fixed $s \in [0,\infty]$, $f(s,z)$ is analytic over $z \in \mathbb{D}$,
 \item \label{gen_fn_iv} $f(s,z)$  is
  non-increasing in $s$ for all $z \in [0,1]$,
\item\label{gen_fn_v}   $f(s,z)$ is uniformly continuous in $(s,z) \in [0,\infty] \times \overline{\mathbb{D}}$.
 \end{enumerate}
\end{lemma}
\begin{proof}
In order to prove \eqref{gen_fn_vi}, we only need to show $\mathbb{P}( |T^\pi_s| =+\infty )=0$ for all $s \in [0,\infty]$. It is enough to show
this just for $s=\infty$, since $|T^\pi_\infty|$ stochastically dominates $|T^\pi_s|$ for each $s \in [0,\infty]$.
Note that it follows from $\int u \,d\pi(u) < \infty$ that the function $s \mapsto \mathbb{P}( |T^\pi_s| =+\infty )$  is non-decreasing and continuous on $[0,\infty]$.
Also note that $\mathbb{P}( |T^\pi| =+\infty )=0$ follows from Proposition \ref{prop_multitype_trichotomy} and our assumption that $\pi$ is either
age-subcritical or age-critical. It remains to observe
 that  $\int \mathbb{P}( |T^\pi_s| =+\infty ) \, \mathrm{d}\pi(s)=\mathbb{P}( |T^\pi| =+\infty )=0$ and \eqref{gen_fn_vi} follows.

 \eqref{gen_fn_i}, \eqref{gen_fn_ii} and \eqref{gen_fn_iii} are standard facts about generating functions.

 \eqref{gen_fn_iv} follows from the fact that
$|T^\pi_s|$ is stochastically increasing in $s$.

\eqref{gen_fn_v} follows as soon as we observe that
$\int u \,d\pi(u) < \infty$ implies that the law of $T^\pi_s$ depends continuously on $s \in [0,\infty]$ (uniform continuity of $f$ follows from continuity and the Heine-Cantor theorem, since
$[0,\infty] \times \overline{\mathbb{D}}$ is compact).
\end{proof}

\begin{lemma}[A system of nonlinear equations for $f(s,z)$]
\label{lemma_campbell_existence} For each $s \in [0,\infty]$,
 $f(s,z)$ satisfies the recursive equation
 \begin{equation}\label{E: Campbell} f(s,z) = z \exp \int (f(u,z) - 1)(u \wedge s) \,d\pi(u), \qquad z \in \overline{\mathbb{D}}.
 \end{equation}
\end{lemma}

\begin{proof}
Denote by $\lambda_s$ the measure $d\lambda_s(a) = (s \wedge a)\,d\pi(a)$. The root of $T_s$ has offspring of ages $a_1, \dots, a_K$
 which are the points of a Poisson point process on $[0,\infty)$ of intensity $\lambda_s$. Thus $K$ is a Poisson random variable of mean $|\lambda_s|:= \int d \lambda_s(a)  < \infty$ and conditional on $K$, the ages $a_1, \dots, a_K$ are independent with law $\lambda_s/\left|\lambda_s\right|$.
\begin{multline*} f(s,z) = \mathbb{E}\left(z^{\left|T_s^\pi \right|}\right)  =  \mathbb{E}\left(z\,\prod_{i=1}^K z^{\left|T^\pi_{a_i}\right|}\right)
  \\ =
 z \sum_{k=0}^\infty \frac{e^{-\left|\lambda_s\right|}\left|\lambda_s\right|^k}{k!}\prod_{i=1}^k \mathbb{E}_{a \sim \lambda_s/\left|\lambda_s\right|}\left(z^{\left|T_a^\pi\right|}\right)
  =  z \sum_{k=0}^\infty \frac{e^{-\left|\lambda_s\right|} e_s^k}{k!}  \\ = z \exp(e_s - |\lambda_s|),\qquad
\text{ where } \qquad
e_s = \int f(a,z) d \lambda_s(a) \,.
\end{multline*}
\end{proof}

In the case when $\pi$ is age-subcritical, we will show that for each $s$, the generating function $f(s,z)$ has an analytic continuation on a small neighborhood of $z=1$.
On the other hand, in the case when $\pi$ is age-critical, we will show that $f(s,z)$ agrees with an analytic function of $\sqrt{1-z}$ in the intersection of $\mathbb{D}$ and a neighborhood of $1$.

Let us denote by $B_\delta(z_0)=\{ z \in \mathbb{C}\, : \, |z-z_0| \leq \delta \}$ the disc of radius $\delta$ centered at $z_0$ in the complex plane.

\begin{lemma}[A system of nonlinear equations for $\log f(s,z)$]\label{lemma_log_f_eqs}
There exists $\delta>0$ such that for any $s \in [0,\infty]$ and $z \in \mathbb{D} \cap B_\delta(1)$ we have
$|\log f(s,z)| \leq 1$ and
\begin{equation}\label{E: log f recursion}
 \log f(s,z)  =  \log f(\infty,z) -  \int_s^\infty \left(e^{\log f(u,z)} -1\right) (u-s) \,d\pi(u) \,.
\end{equation}
\end{lemma}
\begin{proof}
 In order to achieve $|\log f(s,z)| \leq 1$, we want to choose $\delta>0$ such that
 $f(s,z)$ falls in an appropriate small neighborhood of $1$ for all  $s \in [0,\infty]$ and $z \in \mathbb{D} \cap B_\delta(1)$.
Such a choice is possible by statements \eqref{gen_fn_vi} and \eqref{gen_fn_v} of Lemma \ref{lemma_basic_gen_fn}.

We can thus rewrite~\eqref{E: Campbell} in terms of the function $\log f(s,z)$ as
\[\log f(s,z) = \log z +  \int \left(e^{\log f(u,z)} -1\right)\,(u \wedge s)\d\pi(u). \]
In particular, substituting $s= \infty$ we obtain
\[\log f(\infty,z) = \log z +  \int \left(e^{\log f(u,z)} -1\right)\,u\d\pi(u)\,.\]
Taking the difference of the last two equations and using $u - (u \wedge s) = (u-s)_{+}$ we obtain
\eqref{E: log f recursion}.
\end{proof}

We will use \eqref{E: log f recursion} to show that $f(s,z)$ is an analytic function of $f(\infty,z)$.
This will help us in showing that $f(s,z)$ is analytic in $z$ in a small neighborhood of $1$ in the age-subcritical case and, more importantly,
we will use $w = \log f(\infty,z)$ as a local coordinate to resolve the algebraic singularity of  $f(s,\cdot)$ at $z=1$ in the age-critical case.  The next lemma explains how to find $\log f(s,z)$ in terms of $w$.  The continuity statements are given w.r.t.\ the one-point compactification $[0,\infty]$: a function $f$ defined on $[0,\infty]$ is continuous when its restriction to $[0,\infty)$ is continuous and $\lim_{x \to \infty}  f(x) = f(\infty)$.

\begin{lemma}[A nonlinear Volterra equation]\label{L: final value problem}
 There exists $\delta > 0$ such that for each $w \in B_\delta(0) \subset \mathbb{C}$, there exists a unique bounded solution $F(\cdot,w): [0,\infty] \to \mathbb{C}$ of the equation
 \begin{equation}\label{E: F recursion}
F(s,w) = w - \int_s^\infty \left(e^{F(u,w)} - 1\right)\,(u-s)\,d\pi(u)\,.
  \end{equation}
We have $F(s,0) = 0$ for all $s$. The solution depends continuously on $(s,w) \in [0,\infty] \times B_\delta(0)$. For each fixed $s$ the solution depends analytically on $w$. When we consider the Taylor series of $F$ as a function of $w$, each Taylor coefficient is a bounded and continuous function of $s \in [0,\infty]$.
\end{lemma}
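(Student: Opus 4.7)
The plan is to apply the analytic implicit function theorem to equation~\eqref{E: F recursion} in the Banach space $X = C([0,\infty], \mathbb{C})$ of continuous functions on the one-point compactification, equipped with the sup norm. Writing $Kh(s) := \int_s^\infty h(u)(u-s)\,\mathrm{d}\pi(u)$, the equation takes the form $\Phi(F,w) := F - w\mathbf{1} + K(e^F - 1) = 0$, and clearly $\Phi(0,0) = 0$.

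First I would verify that $K \colon X \to X$ is a well-defined compact operator. Continuity of $Kh$ in $s \in [0,\infty]$ and the decay $Kh(s) \to 0$ as $s \to \infty$ follow from dominated convergence, using the hypothesis $\int u \,\mathrm{d}\pi(u) < \infty$. A direct estimate yields $|Kh(s_1) - Kh(s_2)| \le 2\|h\|_\infty |s_1 - s_2|$, so the image of the unit ball is equi-Lipschitz on the compact space $[0,\infty]$, and compactness of $K$ follows from Arzelà--Ascoli. Since the Taylor series for $e^F - 1$ converges uniformly on bounded subsets of $X$, the map $\Phi$ is jointly analytic in $(F,w) \in X \times \mathbb{C}$ with $D_F\Phi(0,0) = I + K$.

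The key step is to show that $I + K$ is invertible on $X$. Compactness of $K$ makes $I + K$ Fredholm of index zero, so it suffices to show $\ker(I+K) = \{0\}$. Using the identity $Kh = c_h\mathbf{1} - \mathcal{L}_\pi h$ with $c_h := \int u\, h(u)\,\mathrm{d}\pi(u)$, the kernel equation becomes $\mathcal{L}_\pi h = h + c_h\mathbf{1}$ in $L^2(\pi)$. In the age-critical case, pairing with the leading eigenfunction $\theta$ of $\mathcal{L}_\pi$ and using $\mathcal{L}_\pi\theta = \theta$ together with $\int\theta\,\mathrm{d}\pi = 1$ from Lemma~\ref{L: operator properties} gives $c_h = 0$; then $h$ is an eigenfunction at $\lambda = 1$, so $h = \alpha\theta$ in $L^2(\pi)$, and the constraint $0 = c_h = \alpha \int u\,\theta\,\mathrm{d}\pi = \alpha\,\theta(\infty)$ with $\theta(\infty) \in (0,\infty]$ (using $\theta(0)=0$, $\theta$ increasing, and $\int\theta\,\mathrm{d}\pi = 1$) forces $\alpha = 0$, and hence $h = 0$ in $L^2(\pi)$; a continuity argument using $h = -Kh$ then gives $h \equiv 0$ in $X$. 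In the age-subcritical case, $I - \mathcal{L}_\pi$ is itself invertible on $X$ (Fredholm with trivial kernel, since $1$ is not an $L^2$-eigenvalue), and $g := (I-\mathcal{L}_\pi)^{-1}\mathbf{1}$ satisfies $g = \mathbf{1} + \mathcal{L}_\pi g \ge \mathbf{1} > 0$, so the self-consistency $c_h\bigl(1 + \int u\, g \,\mathrm{d}\pi\bigr) = 0$ forces $c_h = 0$ and hence $h = 0$.

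With $I + K$ invertible, the analytic implicit function theorem furnishes $\delta > 0$ and an analytic map $w \mapsto F(\cdot,w) \in X$ on $B_\delta(0)$ with $F(\cdot,0) = 0$, unique in a small sup-norm neighbourhood. Joint continuity in $(s,w)$ follows from $X$-continuity in $w$, and the Taylor coefficients $F_n(s) = \tfrac{1}{n!}\partial_w^n F(s,w)|_{w=0}$ lie in $X$, hence are bounded and continuous in $s$. Upgrading uniqueness to all bounded solutions (not just small ones) is handled by noting that any bounded $F$ solving~\eqref{E: F recursion} is automatically in $X$ since $K(e^F-1) \in X$, and then a Grönwall-type iteration of the estimate $|F_1 - F_2|(s) \le C\,K|F_1-F_2|(s)$ together with the compactness of $K$ reduces to the IFT ball. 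The main obstacle I anticipate is precisely the invertibility of $I + K$ in the age-critical regime, where $I - \mathcal{L}_\pi$ itself fails to be invertible; the rank-one correction $c_h\mathbf{1}$ combined with the pairing-with-$\theta$ trick and the positivity $\theta(\infty) > 0$ is what rescues the argument.
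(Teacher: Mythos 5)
Your proposal is correct, but it takes a genuinely different route from the paper's proof. The paper solves \eqref{E: F recursion} by hand: it partitions $[0,\infty)$ into finitely many intervals on which $\int_{\alpha_i}^{\alpha_{i-1}}(u-\alpha_i)\,d\pi(u)<\tfrac12$, runs a backward Picard iteration piece by piece (keeping the sup-norms small so that the exponential nonlinearity stays contractive), gets continuity and analyticity in $w$ from uniform convergence of the iterates, and proves uniqueness among bounded solutions by Gr\"onwall. You instead work globally in $X=C([0,\infty],\mathbb{C})$ and invoke the analytic implicit function theorem for $\Phi(F,w)=F-w\mathbf{1}+K(e^F-1)$, so the whole weight of the argument is carried by the invertibility of $I+K$, which you obtain from compactness of $K$ (Fredholm of index zero) together with triviality of the kernel via the identity $Kh=c_h\mathbf{1}-\mathcal{L}_\pi h$ and the spectral facts of Lemma \ref{L: operator properties}; in the age-critical case the pairing with $\theta$ forces $c_h=0$ and the positivity of $\int u\,\theta(u)\,d\pi(u)$ kills the residual eigendirection. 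This is a nice structural explanation of why the final value problem is well-posed exactly in the critical/subcritical regime, and it delivers analyticity of $w\mapsto F(\cdot,w)$ and the boundedness and continuity of the Taylor coefficients for free, whereas the paper's elementary route deliberately trades machinery for self-containedness; your argument uses only results established before Section \ref{sec_properties_multitype} (Lemma \ref{L: operator properties}, Proposition \ref{prop_multitype_trichotomy}), so there is no circularity. A few points to tighten: (i) for Arzel\`a--Ascoli on the compactification you need equicontinuity at the point $\infty$ uniformly over the unit ball, which the Lipschitz bound alone does not give but which follows at once from $\sup_{\|h\|_\infty\le1}|Kh(s)|\le\int_{[s,\infty)}u\,d\pi(u)\to0$; (ii) in the subcritical case the inequality $g\ge\mathbf{1}$ presupposes $g\ge0$, which you should first extract from the $L^2(\pi)$ Neumann series $g=\sum_{k\ge0}\mathcal{L}_\pi^k\mathbf{1}$ (valid since $\|\mathcal{L}_\pi\|<1$) before reading the fixed-point identity pointwise; (iii) for uniqueness among all bounded solutions, the Gr\"onwall iteration of $|F_1-F_2|(s)\le e^{M}\int_s^\infty |F_1-F_2|(u)\,u\,d\pi(u)$ yields $F_1=F_2$ outright, exactly as in the paper, so there is no need to "reduce to the IFT ball" and compactness of $K$ plays no role in that step.
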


Equation~\eqref{E: F recursion} is a nonlinear Volterra equation of the second kind, posed as a final value problem. It is somewhat non-standard because the integral is with respect to $\pi$ rather than Lebesgue measure, and the assumption that $\pi$ has finite mean will be crucial. We give a detailed proof to keep our argument self-contained, and because it would take just as much space to verify that the equation can be transformed into a standard form satisfying suitable hypotheses to guarantee the conclusions of the lemma.

\subsection{The nonlinear Volterra equation is well-posed}
\label{subsection_volterra_well_posed}

$ $

The goal of \S\ref{subsection_volterra_well_posed} is to prove Lemma \ref{L: final value problem}.

\subsubsection{Proof of existence}

 We begin by proving the existence of a solution of \eqref{E: F recursion}
  with the required properties.
Let us now briefly outline our construction.
 We break $[0,\infty)$ into finitely many intervals of form $[\alpha_i, \alpha_{i-1}]$, where
 \begin{equation}\label{partition_alpha}
 \alpha_k = 0 < \alpha_{k-1} < \dots < \alpha_1 < \alpha_{0}=\infty
 \end{equation}
  and solve  equation \eqref{E: F recursion} piecewise for $i=0,1,2, \dots$, starting at $\infty$.
   On each interval we use a Picard iteration to solve a final value problem with respect to $s$. We will show that on each interval $[\alpha_i, \alpha_{i-1}]$ all the iterates depend analytically on $w$, and are continuous on $[\alpha_i,\alpha_{i-1}] \times B_{\delta_i}(0)$ for some $\delta_i > 0$. We will also show that the iterates converge uniformly. It follows that there exists $\delta>0$ such that the limit $F$ of the iteration is analytic in $w \in B_{\delta}(0)$ for each $s \in [0,\infty]$ and continuous in $(s,w) \in [0,\infty] \times B_{\delta}(0)$, and that for each fixed value of $w \in B_\delta(0)$, $F(\cdot,w)$ solves~\eqref{E: F recursion}.

Let us now start to carry this plan out in detail.

 Also note that $F(\infty,w) = w$, since $\pi$ does not have an atom at $\infty$.

To define the pieces of the iteration, recall that $\int u\,d\pi(u) < \infty$, so we can choose $k \in \mathbb{N}$ and a finite sequence $\alpha_0,\dots,\alpha_{k}$ satisfying \eqref{partition_alpha}
 such that for each $i = 1, \dots, k$, we have
\begin{equation}\label{subdivision_half}
 \int_{\alpha_i}^{\alpha_{i-1}} (u-\alpha_i)\,d\pi(u) < \tfrac{1}{2}\,.
 \end{equation}
We will prove by induction over $i$ that for each $i$ there exists $\delta_i > 0$ such that there is a solution $F(\cdot,w): [\alpha_i,\infty] \to \mathbb{C}$ with the continuity (on $[\alpha_i,\infty]$)  and analyticity
(on $B_{\delta_i}(0)$) required by Lemma \ref{L: final value problem}.

For $i=0$, equation~\eqref{E: F recursion} simply tells us that $F(\infty,w) = w$.

For $i \ge 1$, suppose that we have already a solution on $[\alpha_{i-1},\infty]$, continuous in $(s,w)$ and analytic in $w$. Consider $s \in [\alpha_i,\alpha_{i-1}]$. Applying \eqref{E: F recursion} to both $F(s,w)$ and $F(\alpha_{i-1},w)$ and subtracting, we obtain
\begin{multline}\label{E: induction recursion} F(s,w) = F(\alpha_{i-1},w) - (\alpha_{i-1}-s)\int_{\alpha_{i-1}}^{\infty} \left(e^{F(u,w)}-1\right) \,d\pi(u)\, \\ - \int_s^{\alpha_{i-1}} \left(e^{F(u,w)}-1\right)\,(u-s) \,d\pi(u)\,.
\end{multline}
Note that if $i=1$ then $\alpha_{i-1}=\infty$ by \eqref{partition_alpha}, so the second term on the r.h.s.\ of \eqref{E: induction recursion} is zero, and
thus for $i=1$ \eqref{E: induction recursion} just becomes \eqref{E: F recursion}.

We will solve~\eqref{E: induction recursion} for $s \in [\alpha_i,\alpha_{i-1}]$ and $w$ in some ball $B_{\delta_i}(0)$, where we may choose $\delta_i \in (0,\delta_{i-1}]$. The first two terms on the right-hand side of \eqref{E: induction recursion} are analytic in $w$  and are  jointly continuous in $s$ and $w$ by the induction hypothesis. Only the left-hand side and the final term on the right-hand side depend on the restriction of $F$ to $[\alpha_i,\alpha_{i-1}]$. We solve equation~\eqref{E: induction recursion} by Picard iteration.
Define $F_0$ by
\begin{equation}\label{F_0_def_in_i}
F_0(s,w) = F(\alpha_{i-1},w), \qquad (s,w) \in [\alpha_i,\alpha_{i-1}]\times B_{\delta_i}(0).
\end{equation}
 Then for $j \ge 1$ define inductively
\begin{multline}\label{E: F_iteration}
 F_j(s,w) = F(\alpha_{i-1},w) - (\alpha_{i-1}-s)\int_{\alpha_{i-1}}^{\infty} \left(e^{F(u,w)}-1\right) \,d\pi(u)\, \\ - \int_s^{\alpha_{i-1}} \left(e^{F_{j-1}(u,w)}-1\right)\,(u-s) \,d\pi(u), \qquad s \in [\alpha_i,\alpha_{i-1}].
 \end{multline}
Analogously to \eqref{E: induction recursion}, if $i=1$ then we define the second term on the r.h.s.\ of \eqref{E: F_iteration} to be equal to zero.

Denote by $\|\cdot\|_{\infty,i}$  the infinity norm on the domain $[\alpha_i,\alpha_{i-1}] \times B_{\delta_i}(0)$, where $\delta_i$ will be specified later.
Our next goal is to prove the following lemma.

\begin{lemma}\label{lemma_norm_control}
 With the above notation we have
\begin{equation}\label{E: norm control}
 \|F_{j+1} - F_j\|_{\infty,i}\,  \le
  \tfrac{1}{2} e^{\textup{max}(\|F_j\|_{\infty,i}, \|F_{j-1}\|_{\infty,i})}\, \|F_{j} - F_{j-1}\|_{\infty,i}\, ,  \quad j \geq 1.
\end{equation}
\end{lemma}
\begin{proof}
For any $s \in [\alpha_i,\alpha_{i-1}]$ and $w \in B_{\delta_i}(0)$
we have
\begin{multline}\label{i_norm_bound}
 |F_{j+1}(s,w) - F_j(s,w)| \stackrel{ \eqref{E: F_iteration} }{\le}
  \int_s^{\alpha_{i-1}} \left|e^{F_{j}(u,w)}-e^{F_{j-1}(u,w)}\right|\,(u-s) \,d\pi(u)\,\\
\stackrel{(*)}{\le} \int_s^{\alpha_{i-1}} e^{\textup{max}(|F_j(u,w)|, |F_{j-1}(u,w)|)} |F_{j}(u,w) - F_{j-1}(u,w)|\,(u-s)\,d\pi(u)\\
\le e^{\textup{max}(\|F_j\|_{\infty,i}, \|F_{j-1}\|_{\infty,i})}\, \|F_{j} - F_{j-1}\|_{\infty,i} \,\int_s^{\alpha_{i-1}} (u-\alpha_i)\,d\pi(u),
\end{multline}
where in $(*)$ we used that
 if $x_1, x_2 \in \mathbb{C}$ and $|x_1|\leq a, |x_2|\leq a$ both hold  then $|e^{x_1}-e^{x_2}|\leq e^{a}|x_1-x_2|$ also holds (this inequality follows from the fact that $|\frac{\mathrm{d}}{\mathrm{d}z} e^z| \leq e^{|z|}$).
By \eqref{i_norm_bound} and our assumption \eqref{subdivision_half} we obtain \eqref{E: norm control}.
\end{proof}

Lemma \ref{lemma_norm_control} implies that
  we will have a strict contraction in $\|\cdot\|_{\infty,i}$ as long as we can keep control of the norms $\|F_j\|_{\infty,i}$. This is what we show in the next lemma.

\begin{lemma} One can choose $\delta_i>0$ sufficiently small so that
\begin{align}
\label{E: norm hypothesis}
\|F_0\|_{\infty,i} + 3 \|F_1 - F_0\|_{\infty,i} & \le 1/4\, ,  \\
 \label{E:_one_quarter_bound}
\|F_j\|_{\infty,i} &\le 1/4\, , \quad & j=0,1,\dots \, ,  \\
\label{E: Cauchy sequence}
 \|F_{j} - F_{j-1}\|_{\infty,i}  & \leq  \left(\tfrac{2}{3}\right)^{j-1} \| F_1 - F_0\|_{\infty,i}\, , \quad & j=1,2,\dots\, .
\end{align}
\end{lemma}
\begin{proof} First we show  \eqref{E: norm hypothesis}.
We have
\begin{multline}\label{from_zero_to_one}
   F_1(s,w)-F_0(s,w) \stackrel{\eqref{F_0_def_in_i},\eqref{E: F_iteration} }{=} (\alpha_{i-1}-s)\int_{\alpha_{i-1}}^{\infty} \left(e^{F(u,w)}-1\right) \,d\pi(u)\, \\ - \left(e^{F(\alpha_{i-1},w)}-1\right) \int_s^{\alpha_{i-1}} \,(u-s) \,d\pi(u), \qquad s \in [\alpha_i,\alpha_{i-1}].
\end{multline}
Recall from below \eqref{E: F_iteration} that if $i=1$ then $\alpha_{i-1}=\infty$ and we define the first term on the r.h.s.\ of \eqref{from_zero_to_one} to be equal to zero.
Our induction hypothesis implies that $F$ is jointly continuous in $(s,w) \in [\alpha_{i-1},\infty] \times B_{\delta_{i-1}}(0)$, moreover we have $F_0(s,0) = 0$ by the definition of $F_0$ in \eqref{F_0_def_in_i} and our induction hypothesis (which implies $F(\alpha_{i-1},0)=0$). Putting these observations together with \eqref{subdivision_half} and \eqref{from_zero_to_one}
we see that we can choose (and fix) $\delta_i>0$ sufficiently small to ensure that \eqref{E: norm hypothesis} holds.

We will prove \eqref{E:_one_quarter_bound} and \eqref{E: Cauchy sequence}  simultaneously by induction on $j$.
  First observe that~\eqref{E: norm hypothesis} implies \eqref{E:_one_quarter_bound}
     for $j = 0$ and $j=1$. Also, \eqref{E: Cauchy sequence} obviously holds for $j=1$.
          Now let $j \ge 1$ and suppose that \eqref{E:_one_quarter_bound} and \eqref{E: Cauchy sequence} hold for all indices up to $j$. We have
\begin{equation*}
 \|F_{j+1} - F_j\|_{\infty,i} \stackrel{\eqref{E: norm control},\eqref{E:_one_quarter_bound} }{\leq}
  \tfrac{1}{2} e^{1/4}\, \|F_{j} - F_{j-1}\|_{\infty,i} \leq \tfrac{2}{3}  \|F_{j} - F_{j-1}\|_{\infty,i}\, ,
\end{equation*}
thus \eqref{E: Cauchy sequence} holds for $j+1$. Next we observe that
\begin{equation*}
\|F_{j+1} - F_0\|_{\infty,i}  \le \sum_{k=1}^{j+1}  \|F_{k} - F_{k-1}\|_{\infty,i}
  \stackrel{ \eqref{E: Cauchy sequence} }{\le}  3\|F_1 - F_0\|_{\infty,i}\,.
\end{equation*}
This, together with \eqref{E: norm hypothesis} implies that \eqref{E:_one_quarter_bound} holds for $j+1$, completing the induction step.
\end{proof}
Now we can finish our construction: since~\eqref{E: Cauchy sequence} implies that $F_j$ is a Cauchy sequence with respect to $\|\cdot\|_{\infty,i}$, so it converges. By considering the limit as $j \to \infty$ of both sides of~\eqref{E: F_iteration}, we see that the limit function $F$ solves~\eqref{E: induction recursion} on $[\alpha_i,\alpha_{i-1}]\times B_{\delta_{i}}(0)$.
 Since each $F_j$ is jointly continuous in $(s,w)$, the uniform limit $F$ also has this property.
 Since each $F_j$ is analytic with respect to $w$ on $B_{\delta_{i}}(0)$, the uniform limit $F$ is also analytic by Morera's theorem.
It is easy to see by induction on $j$ that $F_j(s,0)=0$ for all $s \in [\alpha_i,\alpha_{i-1}]$, so $F$ also has this property.

By piecing together the solutions for $i = 1, \dots,k$, and taking $\delta = \delta_k$ we obtain a solution of~\eqref{E: induction recursion} defined on $[0,\infty] \times B_\delta(0)$ that is continuous in $(s,w)$ and analytic in $w$ for each $s$. Note that $F(\cdot,w)$ is also bounded, since it is continuous on the compact space $[0,\infty]$.
 Since the Taylor coefficients of $F$ about $w=0$ can be expressed by the Cauchy integral formula,
   the joint continuity of $F$ implies the continuity of the Taylor coefficients as functions of $s \in [0,\infty]$. The Taylor coefficients of $F$ are also bounded, since the space $[0,\infty]$ is compact.

\subsubsection{Proof of uniqueness} It remains to show the uniqueness of the solution of \eqref{E: F recursion}
  with the required properties.
Suppose that a value $w \in B_\delta(0)$ is fixed, and that $\tilde{F}(\cdot,w)$ is another bounded solution of~\eqref{E: F recursion} defined on $[0,\infty]$.  We may assume that both $\tilde{F}(\cdot,w)$ and $F(\cdot,w)$ are bounded in absolute value by $M < \infty$. Then for every $s$ we have
\begin{eqnarray*} |F(s,w) - \tilde{F}(s,w)| & \stackrel{\eqref{E: F recursion}}{\le} &  \int_{s}^\infty \left|e^{F(u,w)} - e^{\tilde{F}(u,w)}\right|\,(u - s)\,d\pi(u)\\ &
\stackrel{(*)}{\le} & \int_s^\infty  |F(u,w) - \tilde{F}(u,w)| e^M \,u\,d\pi(u)\, ,\end{eqnarray*}
where in $(*)$ we used that
 if $x_1, x_2 \in \mathbb{C}$ and $|x_1|\leq M, |x_2|\leq M$ both hold  then $|e^{x_1}-e^{x_2}|\leq |x_1-x_2|e^{M}$ also holds.
Since $e^M \,u\,d\pi(u)$ is a finite measure, Gr\"onwall's inequality implies that $ \tilde{F}(\cdot,w) \equiv F(\cdot,w)$.

\medskip

The proof of Lemma \ref{L: final value problem} is complete.

\subsection{Taylor coefficients}
\label{subsection_taylor_coefficiants}

The goal of \S\ref{subsection_taylor_coefficiants} is to prove Lemma~\ref{lemma_tree_s_gen_fn_asymp},
therefore we assume that $\pi$ is age-critical in \S\ref{subsection_taylor_coefficiants}.
 Recall from Lemma \ref{L: operator properties}\eqref{L:eigen} that $\mathcal{L}_\pi \theta=\theta$, i.e.,
\begin{equation}\label{theta_eigenfunction}
\theta(s) = \int \theta(u) (u \wedge s) \,d \pi(u)
\end{equation}
and $\int \theta \d\pi = 1$ and $\theta(0) = 0$.
Let us define
\begin{equation}\label{theta_infty}
\theta(\infty):=\lim_{s \to \infty} \theta(s) \stackrel{ \eqref{theta_eigenfunction} }{=} \int \theta(u) u \,d\pi(u).
\end{equation}

Let us recall the notion of the function $F$ from Lemma \ref{L: final value problem}.

\begin{lemma}[Taylor coefficients]\label{lemma_taylor}
Suppose that $\pi$ is age-critical. Then $0 < \theta(\infty) < \infty$ and the Taylor expansion of $F$ about $w=0$ is
\begin{equation}\label{E: Taylor expansion of F}
F(s,w) = \frac{\theta(s)}{\theta(\infty)} w + A_2(s) w^2 + O(w^3)\,,
\end{equation}
where
\begin{equation}\label{A_two_zero}
A_2(0) =  -\frac{\int \theta(u)^3 \,d\pi(u)}{2 \theta(\infty)^2} < 0.
\end{equation}
\end{lemma}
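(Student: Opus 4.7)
The plan is to extract the Taylor coefficients $A_1, A_2$ from the final-value equation \eqref{E: F recursion} using the analyticity of $F$ provided by Lemma \ref{L: final value problem}, and then to identify them explicitly using the eigenfunction equation $\theta(s) = \int (u \wedge s)\theta(u)\,d\pi(u)$. Writing $F(s,w) = A_1(s) w + A_2(s) w^2 + O(w^3)$ (with no constant term since $F(s,0) \equiv 0$), expanding $e^F - 1 = F + F^2/2 + O(F^3)$ inside \eqref{E: F recursion}, and matching powers of $w$, I would obtain
\begin{align*}
A_1(s) &= 1 - \int_s^\infty A_1(u)(u-s)\,d\pi(u), \\
A_2(s) &= -\int_s^\infty \left[A_2(u) + \tfrac{1}{2} A_1(u)^2\right] (u-s)\,d\pi(u),
\end{align*}
with the remainder uniform in $s \in [0,\infty]$.

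For the identification $A_1 = \theta/\theta(\infty)$ the key ingredient is the pointwise identity $(u - s)_{+} + (u \wedge s) = u$. Combining this with the eigenfunction equation and the normalization $\int \theta\,d\pi = 1$ gives
\[
\int_s^\infty \theta(u)(u-s)\,d\pi(u) = \theta(\infty) - \theta(s),
\]
so that $\theta/\theta(\infty)$ solves the recursion for $A_1$; uniqueness comes from Lemma \ref{L: final value problem}. Strict positivity $\theta(\infty) > 0$ is clear from $\theta$ being nonnegative, continuous, increasing, and not identically zero. The finiteness $\theta(\infty) < \infty$ is the main subtle point: one route is to deduce it from the boundedness of $A_1$ on the compactification $[0,\infty]$ provided by Lemma \ref{L: final value problem}, together with the boundary value $A_1(\infty) = 1$ (which follows from $F(\infty,w) = w$), since $A_1 = \theta/\theta(\infty)$ can be compatible with $A_1(\infty) = 1$ only if $\theta(\infty) < \infty$.

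To evaluate $A_2(0)$, I would set $s = 0$ in the $A_2$ recursion, giving
\[
A_2(0) = -\int A_2(u) u\,d\pi(u) - \tfrac{1}{2}\int A_1(u)^2 u\,d\pi(u),
\]
and then eliminate the unknown integral $\int A_2 u\,d\pi$ by multiplying the $A_2$ recursion by $\theta(s)$ and integrating against $d\pi(s)$. Exchanging order by Fubini and applying the companion identity $\int_0^u \theta(s)(u-s)\,d\pi(s) = u - \theta(u)$ (the mirror image of the identity used for $A_1$), the $\int A_2\theta\,d\pi$ terms cancel, leaving
\[
\int A_2(u) u\,d\pi(u) = -\tfrac{1}{2}\int A_1(u)^2 (u - \theta(u))\,d\pi(u).
\]
Substituting back and using $A_1 = \theta/\theta(\infty)$, the two contributions proportional to $\int \theta^2 u\,d\pi$ cancel, yielding $A_2(0) = -\int \theta^3\,d\pi / (2\theta(\infty)^2)$, which is strictly negative since $\theta\ge 0$ is nontrivial.

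The main obstacle is confirming $\theta(\infty) < \infty$ under the stated hypotheses; once this is settled, the remainder amounts to two symmetric applications of the $(u-s)_{+} + (u \wedge s) = u$ identity combined with a Fubini exchange, yielding the two formulas claimed in the lemma in just a few lines.
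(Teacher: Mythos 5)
Your extraction of the two recursions for $A_1$ and $A_2$, and your evaluation of $A_2(0)$ — eliminating $\int A_2(u)\,u\,d\pi(u)$ by integrating the $A_2$ recursion against $\theta\,d\pi$, using Fubini and the identity $\int_0^u\theta(s)(u-s)\,d\pi(s)=u-\theta(u)$ — are correct and amount to the same computation as the paper, which phrases it as pairing $(I-\mathcal{L}_\pi)A_2=c'\mathbf{1}+\tfrac{1}{2\theta(\infty)^2}\mathcal{L}_\pi\theta^2$ with $\theta$ in $L^2(\pi)$. The genuine gap is exactly at the point you flag as the main subtlety: your identification $A_1=\theta/\theta(\infty)$ is done by guess-and-verify, and the verification that $\theta/\theta(\infty)$ solves the $A_1$ recursion already presupposes $\theta(\infty)<\infty$ (otherwise the candidate is not even defined, and the identity $\int_s^\infty\theta(u)(u-s)\,d\pi(u)=\theta(\infty)-\theta(s)$ is vacuous). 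You then propose to \emph{deduce} $\theta(\infty)<\infty$ from ``$A_1=\theta/\theta(\infty)$ together with $A_1(\infty)=1$'', which is circular: a priori $\theta$ is only known to be increasing and Lipschitz (Lemma \ref{L: operator properties}), so boundedness of $\theta$ is precisely what must be proved, and it cannot come from an identification that assumes it.

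The non-circular route, which is the one the paper takes, runs in the opposite direction: rewrite the $A_1$ recursion using $(u-s)_+=u-(u\wedge s)$ as
\begin{equation*}
(I-\mathcal{L}_\pi)A_1=\Bigl(1-\int_0^\infty A_1(u)\,u\,d\pi(u)\Bigr)\mathbf{1},
\end{equation*}
pair with $\theta\in L^2(\pi)$ (legitimate since $A_1$ is bounded and $\pi$ has finite mean), and use self-adjointness together with $\mathcal{L}_\pi\theta=\theta$ (age-criticality) to kill the left-hand side; this forces $\int A_1(u)\,u\,d\pi(u)=1$, hence $A_1\in\ker(I-\mathcal{L}_\pi)=\mathrm{span}(\theta)$ by simplicity of the leading eigenvalue. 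Writing $A_1=c\theta$, the normalization $1=c\int\theta(u)\,u\,d\pi(u)=c\,\theta(\infty)$ rules out $c=0$, and boundedness of $A_1$ (Lemma \ref{L: final value problem}) then yields $0<\theta(\infty)<\infty$ and $c=1/\theta(\infty)$ simultaneously. This step — the $L^2(\pi)$ pairing that produces $\int A_1\,u\,d\pi=1$ and places $A_1$ in the kernel — is the missing idea in your plan. A secondary, smaller point: the uniqueness you invoke from Lemma \ref{L: final value problem} concerns the nonlinear equation \eqref{E: F recursion}, not the linearized recursion for $A_1$; if you did want the guess-and-verify route you would also need a separate (easy, Gr\"onwall-type) uniqueness statement for bounded solutions of the linear Volterra equation, but with the kernel argument above this is not needed at all.
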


\begin{proof}
$F(s,0) \equiv 0$, so the constant term in the Taylor series vanishes identically. The coefficient of $w$ is $A_1(s):= F_w(s,0) : = \left. \frac{\partial}{\partial w}F(s,w)\right|_{w=0}$. By Lemma \ref{L: final value problem} $s \mapsto F_w(s,0)$  is bounded and continuous on $[0,\infty]$. To compute it, we differentiate both sides of \eqref{E: F recursion} and evaluate at $w=0$.
\begin{eqnarray}\notag F_w(s,0) & = & 1 - \int_{s}^{\infty} F_w(u,0) e^{F(u,0)} (u-s) \d\pi(u)\\ \notag & = & 1 - \int_{s}^{\infty} F_w(u,0) (u-s) \d\pi(u)\,
\\ \label{E: F_w recursion} & = & 1 - \int_0^\infty F_w(u,0) u \d\pi(u) + \int_0^\infty F_w(u,0)\,(u \wedge s)\,d\pi(u)
\end{eqnarray}
Denote by $\mathbf{1}: [0,\infty] \to \mathbb{R}$ the function $\mathbf{1}(s):= 1$. From \eqref{E: F_w recursion} we obtain
\begin{equation}\label{F_identity}
 (I-\mathcal{L}_\pi) F_w(\cdot,0) = \left(1 - \int_0^\infty F_w(u,0)\,u \d\pi(u)\right) \mathbf{1}\,.
 \end{equation}
(Note that the integrals in~\eqref{E: F_w recursion} and~\eqref{F_identity} converge since the first moment of $\pi$ is finite). Take the inner product of both sides of \eqref{F_identity} in $L^2(\pi)$ with $\theta$. Since $I-\mathcal{L}_\pi$ is self-adjoint and annihilates $\theta$, the left-hand side vanishes. Therefore
\begin{equation}
 0 = \left(1 - \int_0^\infty F_w(u,0)\,u \d\pi(u)\right)\, \int \theta(u) \d\pi(u)\,.
 \end{equation}
Now we can use $\int \theta(u) \d\pi(u)=1$ to  conclude
\begin{equation}\label{E: F_w normalisation}
\int_0^\infty F_w(u,0)\,u \d\pi(u) = 1\,.
\end{equation}
Substituting \eqref{E: F_w normalisation} into \eqref{F_identity} we obtain
$ F_w(\cdot,0) = \mathcal{L}_\pi F_w(\cdot,0)$, so by Lemma \ref{L: operator properties}\eqref{L:eigen} we must have $F_w(\cdot,0) = c\theta(\cdot)$ for some $c \in \mathbb{R}$.
We have already noted that $F_w(\cdot,0)$  is bounded and continuous on $[0,\infty]$, thus $\theta(\cdot)$ is bounded and
\begin{equation} 1  \stackrel{ \eqref{E: F_w normalisation} }{=}  c \int_0^\infty  \theta(u) u \d\pi(u) \stackrel{ \eqref{theta_infty} }{=}  c \,\theta(\infty)\,.
\end{equation}
 Hence $0 < \theta(\infty) < \infty$ and $c = 1/\theta(\infty)$.
Thus the first order Taylor coefficient in \eqref{E: Taylor expansion of F} is indeed $A_1(s)= \theta(s)/\theta(\infty)$.

It remains to prove \eqref{A_two_zero}. Differentiating both sides of \eqref{E: F recursion} twice with respect to $w$ and substituting $w=0$, we find
\begin{eqnarray*}
 A_2(s) & = &  - \int_s^\infty \left(A_2(u) + A_1(u)^2\right)(u-s)\,d\pi(u)\\
  & = & -\int_0^\infty \left(A_2(u) + \frac{\theta(u)^2}{2\theta(\infty)^2}\right) \left(u - u \wedge s\right) \,d\pi(u)\,.
\end{eqnarray*}
Hence
$$(I-\mathcal{L}_\pi)A_2 (s)  -  \frac{ \mathcal{L}_\pi \theta^2 (s)}{2\theta(\infty)^2} = - \int_0^\infty \left(A_2(u) + \frac{\theta(u)^2}{2\theta(\infty)^2}\right)\,u\,d\pi(u)\,.$$ The right-hand side of this equation does not depend on $s$, so it must be some constant $c'$. Therefore
\begin{equation}\label{E: A_2 operator equation}
(I-\mathcal{L}_\pi)A_2 = c'\mathbf{1} +  \frac{1}{2\theta(\infty)^2}\mathcal{L}_\pi \theta^2\,.
\end{equation}
To find $c'$, take the inner product of both sides of \eqref{E: A_2 operator equation} with $\theta$:
\begin{multline}
 0 = \langle (I-\mathcal{L}_\pi)\theta, A_2\rangle_\pi =
 \langle \theta, (I-\mathcal{L}_\pi)A_2 \rangle_\pi
 \stackrel{ \eqref{E: A_2 operator equation} }{=}
  \langle \theta, c' \mathbf{1}\rangle_\pi + \langle \theta, \mathcal{L}_\pi \theta^2 \rangle_\pi / (2 \theta(\infty)^2) \\
   =
   c' \langle \theta, \mathbf{1}\rangle_\pi + \langle \mathcal{L}_\pi \theta, \theta^2\rangle_\pi /(2 \theta(\infty)^2)  =
    c' + \langle\theta, \theta^2\rangle_\pi/(2 \theta(\infty)^2)
     \end{multline}
Substituting $s=0$ into both sides of \eqref{E: A_2 operator equation} and using $(\mathcal{L}_\pi A_2)(0)=0$ and $\theta(0)=0$ we obtain $A_2(0) = c'$ and \eqref{A_two_zero} follows.
\end{proof}

\begin{proof}[Proof of Lemma~\ref{lemma_tree_s_gen_fn_asymp}] Recall the plan outlined in the paragraph before the statement of Lemma \ref{L: final value problem}.
We have the following expression for $\log z$ as an analytic function of $w = \log f(\infty,z)$, valid in some disc around $w=0$:
\begin{equation}\label{log_z_expansion}
\log z
\stackrel{ \eqref{f_s_z_def} }{=}
 \log f(0,z)
  \stackrel{ (*) }{=}
   F(0,w) \stackrel{(**)}{=}
     A_2(0) w^2 + O(w^3)\,,
\end{equation}
where in $(*)$ we used  Lemmas \ref{lemma_log_f_eqs} and  \ref{L: final value problem}, moreover
 in $(**)$ we used Lemma \ref{lemma_taylor} and $\theta(0)=0$.  From \eqref{log_z_expansion} we obtain
 $ 1 - z  = -A_2(0) w^2 + O(w^3),$
  hence $w$ has an expansion around $z=1$ as a convergent power series in $(1-z)^{1/2}$:
  \begin{equation}\label{w_Puiseux}
   w = \frac{1}{\sqrt{-A_2(0)}}(1-z)^{1/2} + a_2(1-z) + a_3(1-z)^{3/2} + \dots\,.
   \end{equation}

Recall from \eqref{phi_theta_cube_int_def} that we defined $\phi := 1/\int \theta(u)^3 d\pi(u)$. Note that we have $\log f(\infty,z) < 0 $ if $0\leq z < 1$, which determines the correct choice of branch of square root in the above expansion. Let $\sqrt{1-z}$ denote the branch of $(1-z)^{1/2}$ that takes positive values for $0\leq z < 1$. Then we have
\begin{multline}\label{f_expansion_log}
 \log f(s,z)  = F(s,w)  \stackrel{ \eqref{E: Taylor expansion of F} }{=} \\ \frac{\theta(s)}{\theta(\infty)} w  +  O(w^2)  \stackrel{ \eqref{A_two_zero}, \eqref{w_Puiseux} }{=}  -\sqrt{2\phi}\,\theta(s) \sqrt{1-z} + O\left(1-z\right)\,.
\end{multline}
Taking the exponential of both sides of \eqref{f_expansion_log} we obtain the desired \eqref{f_s_expansion}.
The implied constant in the error term is uniform as a function of $s$, since the Taylor coefficients of $F$ are bounded by Lemma~\ref{L: final value problem}.

Now the proof of \eqref{f_T_expansion} follows from \eqref{f_s_expansion} using \eqref{f_s_z_def}, $\int  \,d\pi(s)=1$ and $\int \theta(s) \,d\pi(s)=1$. The proof of Lemma \ref{lemma_tree_s_gen_fn_asymp} is complete.
\end{proof}

\subsection{Critical forest fire equations}
\label{subsection_smol_uniqueness}

The goal of \S\ref{subsection_smol_uniqueness} is to prove Proposition \ref{prop_Smol_uniqueness_simplified}.
 In the proof we will use generating functions. In fact we will prove a more general result where Assumption \ref{assumption_v_k_0_comes_from_BP} is replaced
 with an assumption on the initial state $\underline{v}(0)$ which is formulated entirely in terms of generating functions:

\begin{definition}[Analytic inverse of generating function]\label{def_analytic_inverse}
Let $\underline{v}=(v_k)_{k=1}^{\infty}$ satisfy $v_k\geq 0$ and $\sum_{k=1}^{\infty} v_k =1$.
We denote by $f(z)=\sum_{k=1}^\infty v_k z^k$ the generating function of $\underline{v}$.
Noting that $f(0)=0$, $f(1)=1$ and $f$ is strictly increasing on $[0,1]$, the inverse
function $f^{-1}$ of $f$ is well-defined on $[0,1]$ and satisfies $f^{-1}(1)=1$ and $(f^{-1})'(1)\geq 0$.
We say that $f$ has an analytic inverse if
  there exists some $\delta>0$ and
 a complex analytic function
$g: B_\delta(1) \to \mathbb{C}$, satisfying either $g'(1) > 0$ or both $g'(1) = 0$ and $g''(1) <0$, such that
\begin{equation}
g(z)=f^{-1}(z), \qquad z \in [1-\delta,1].
\end{equation}
\end{definition}
In other words, the generating function $f$ of $\underline{v}$ has the analytic inverse property if $f^{-1}$ can be extended analytically to  $B_\delta(1)$ for some $\delta > 0$,
moreover the second order Taylor polynomial of $f^{-1}$ about $z=1$ is not identically zero.

\begin{remark} Assume that $\underline{v}$ satisfies Definition \ref{def_analytic_inverse}.

  If $g'(1) > 0$ then we have  $\sum_{k=1}^{\infty} k v_k=f'(1)=1/g'(1)$ and $f$ is analytic in a small neighborhood of $z=1$ and thus $v_k$ decays exponentially as $k \to \infty$ by the Vivanti--Pringsheim theorem.

 If  $g'(1) = 0$ and $g''(1) <0$ hold, then $f$ cannot be extended analytically to a small neighborhood of $z=1$, and by  Example (c) of Theorem 4 of chapter XIII.5 of \cite{Feller}
 we have $\sum_{\ell=k}^\infty v_\ell \approx \sqrt{ \frac{-2 }{\pi g''(1)  }} k^{-1/2}$ as $k \to \infty$.
\end{remark}

The next result is a modification of \cite[Theorem 1]{RathToth} (but neither result follows from the other one).

\begin{proposition}[Critical forest fire equations]\label{prop_Smol_uniqueness_general}
Let us assume that the initial condition $\underline{v}(0)=(v_k(0))_{k=1}^{\infty}$ satisfies Definition \ref{def_analytic_inverse}.
Then the critical forest fire equations (\eqref{smol_ff_eq}+\eqref{smol_bc})
 have a unique solution $\underline{v}(\cdot)$, which also satisfies the properties listed in  Proposition \ref{prop_Smol_uniqueness_simplified}.
\end{proposition}

We will prove Proposition \ref{prop_Smol_uniqueness_general} in \S \ref{subsub_ffe_wellposed}.
 Proposition \ref{prop_Smol_uniqueness_simplified} immediately follows from the result of Proposition \ref{prop_Smol_uniqueness_general} as soon as we prove the next lemma.

\begin{lemma}[The law of $|T^\pi|$ satisfies the analytic inverse property]\label{lemma_tree_gen_fn_symp}
Suppose that $\int u \,d\pi(u) < \infty$ holds and $\pi$ is either age-critical or age-sub\-critical. Let us define $\underline{v}=(v_k)_{k=1}^{\infty}$ by $v_k:=\mathbb{P}( |T^\pi|=k ), k=1,2,\dots$.
Under these conditions $\underline{v}$ satisfies Definition \ref{def_analytic_inverse}.
Moreover, if $\pi$ is age-subcritical then $g'(1) > 0$ and if $\pi$ is age-critical then $g'(1) = 0$ and $g''(1) <0$.
\end{lemma}

In other words, if $\underline{v}(0)$ satisfies Assumption \ref{assumption_v_k_0_comes_from_BP} then it also satisfies Definition \ref{def_analytic_inverse}.

\begin{proof}[Proof of Lemma~\ref{lemma_tree_gen_fn_symp}]
Let us recall the notion of $f(z)$ and $f(s,z)$ from \eqref{f_s_z_def} and $F(s,w)$ from  Lemma \ref{L: final value problem}.
We will construct the analytic inverse function $g$ of $f$ required by Definition \ref{def_analytic_inverse} by letting $g=G \circ H^{-1}$, where
\begin{equation}
G(z):=e^{F(0,\, \log(z))},
\quad
H(z):=\int e^{F(s,\,\log(z))}\, \mathrm{d}\pi(s).
\end{equation}
Note that $G(1)=1$ and $H(1)=1$, moreover $G$ and $H$ are complex analytic functions in a  neighborhood of $z=1$.
By  Lemmas \ref{lemma_log_f_eqs} and \ref{L: final value problem}
we have
\begin{equation}\label{f_F_relations}
f(s,z)=e^{F(s,\, \log(f(\infty,z)))},
\quad
f(0,z)
\stackrel{ \eqref{f_s_z_def} }{=}
 z=e^{F(0,\, \log(f(\infty,z)))},
\end{equation}
therefore for any $z \in (1-\epsilon,1]$, where $\epsilon > 0$ is small enough,  we have
\begin{equation}\label{G_H_useful_facts}
G(f(\infty,z))= z,  \qquad f(z) \stackrel{ \eqref{f_s_z_def} }{=} H(f(\infty,z)).
\end{equation}
We want $H$ to be invertible in a small neighborhood of $1$, therefore our next goal is to show that $0<H'(1)<\infty$ holds. First note that $H'(1)=\int F_w(s,0)\, \mathrm{d}\pi(s)$ follows
from the definition of $H$. Next note that $F(s,0)=0$ and $F(s,w)<0$ if $w<0$ by \eqref{f_F_relations}, thus
$F_w(s,0) \geq 0$. Also note that $\sup_{s\geq 0}F_w(s,0)<\infty$ by Lemma \ref{L: final value problem}, thus $H'(1)<\infty$. Finally, we have to rule out the possibility that
$H'(1)=0$.
 Let us denote $A_1(s)=F_w(s,0)$ and observe that $A_1(s)=1-\int_s^{\infty} A_1(u)(u-s)\, \mathrm{d}\pi(u)$ holds by  \eqref{E: F_w recursion}  (even if  $\pi$ is age-subcritical). Differentiating this
 w.r.t.\ $s$ we obtain  $A'_1(s)=\int_s^\infty A_1(u)\, \mathrm{d}\pi(u)$. Now $A_1(\infty)=1$ and $A_1(s)\geq 0$, so if we indirectly
 assume $H'(1)=\int_0^\infty A_1(u)\, \mathrm{d}\pi(u)=0$, then $A'_1(s) \equiv 0$, so $A_1(s) \equiv A_1(\infty)$,
 which contradicts $\int_0^\infty A_1(u)\, \mathrm{d}\pi(u)=0$.
This completes the proof of $0<H'(1)<\infty$, which implies that $H$ has a complex analytic inverse function $H^{-1}$ in a small neighborhood of $z=1$, so
the desired inverse function $g(\cdot)$ of $f(\cdot)$ can be defined as $g=G \circ H^{-1}$ by \eqref{G_H_useful_facts}, moreover $g$ is complex analytic in a small neighborhood of $z=1$.

Note that $f'(1)=\sum_{k=1}^{\infty} k v_k=\mathbb{E}(|T^\pi|)$.

If $\pi$ is age-subcritical then $f'(1)<\infty$  by Proposition \ref{prop_multitype_trichotomy}, thus $g'(1)>0$.

If $\pi$ is age-critical then $f'(1)=\infty$ by Proposition \ref{prop_multitype_trichotomy}, thus $g'(1)=0$.
It remains to show that $g''(1)<0$. One can check that $G'(1)=0$ and thus $g''(1)=G''(1)/(H'(1))^2$.
Noting that  by Lemma \ref{lemma_taylor} we have $H'(1)=1/\theta(\infty)$, we can therefore write $g''(1)=G''(1) \theta^2(\infty)$.
We can then use $F_w(0,0)=A_1(0)=\theta(0)/\theta(\infty)=0$ to deduce $G''(1)=2A_2(0)$, whence $g''(1)=-1/\phi<0$ by \eqref{phi_theta_cube_int_def} and \eqref{A_two_zero}.

\end{proof}

\subsubsection{Analytic inverse property implies well-posedness}
\label{subsub_ffe_wellposed}

\begin{proof}[Proof of Proposition \ref{prop_Smol_uniqueness_general}] Let $f_0(z)=\sum_{k=1}^\infty v_k(0)z^k$.
The condition that\\   $\sum_{k=1}^\infty k v_k(0)<\infty$ is equivalent to $f'_0(1)<\infty$, which is in turn equivalent to $g'(1)> 0$, where $g$ is the inverse function of $f_0$.

 If $g'(1)>0$ then both $f_0$ and $g$ are invertible complex analytic functions in a small neighborhood of $1$. In particular, this implies that $f_0^{'''}(1)<\infty$ and thus
 $\sum_{k=1}^\infty k^3 v_k(0)<\infty$, hence  we can apply \cite[Theorem 1]{RathToth} to conclude the proof of Proposition \ref{prop_Smol_uniqueness_general}.

 The condition $g'(1)=0$ is equivalent to $\sum_{k=1}^\infty k v_k(0)=\infty$, which is in turn equivalent to  $t_{\mathrm{gel}}=0$ (see \eqref{def_gelation_time}). Recall from \cite[(41)]{RathToth} that we define
 $V_0(x)=f_0(e^{-x})-1$ for any $x \in [0, \infty)$. Recalling \cite[(57)]{RathToth} we define
 $
 E(x)=-V'_0(x)^3 / V^{''}_0(x)
 $.
In \cite[\S 4.2, \S 4.3]{RathToth} the uniqueness of the equations (\eqref{smol_ff_eq}+\eqref{smol_bc}) and all of the other conclusions of Proposition \ref{prop_Smol_uniqueness_simplified} are proved under the assumptions
 \begin{equation}\label{enough_for_smol_uniqueness}
 \lim_{x \to 0_+} E(x) \in (0,\infty), \qquad
   E'(x)= O(x^{-1/2}) \;\text{ as }\; x \to 0_+
   \end{equation}
  on the initial condition $(v_k(0))_{k=1}^{\infty}$, thus we only need to
 show that the assumptions of Proposition \ref{prop_Smol_uniqueness_general} imply \eqref{enough_for_smol_uniqueness}. Indeed: denote the inverse function of $-V_0(x)$ by $X(u)$, i.e.,
 $X(-V_0(x))=x$, $x \in [0,\infty)$. By  \cite[(63)]{RathToth} we have
 \begin{equation}\label{E_inv_second_der}
 E(x)=1/X''(-V_0(x)).
 \end{equation}

Using the above definitions we can write  $X(u)=-\log(g(1-u))$, thus we can use  $g'(1)=0$ to derive
\begin{equation}\label{E_limit_finite}
\lim_{x \to 0_+} E(x)= 1/X''(0)=-1/g''(1) \in (0,\infty)
\end{equation}
 using the assumption of Proposition \ref{prop_Smol_uniqueness_general}. Thus
the first statement of \eqref{enough_for_smol_uniqueness} holds.
Differentiating \eqref{E_inv_second_der} with respect to $x$ we obtain
\begin{equation}
E'(x)=E(x)^2 X^{'''}(-V_0(x)) V'_0(x).
\end{equation}
Now $\lim_{x \to 0_+} E(x)^2 X^{'''}(-V_0(x))$ exists and is finite by the assumption that $g$ is analytic
near $1$, and $|V'_0(x)|= O(x^{-1/2})$ follows from $t_{\mathrm{gel}}=0$, \eqref{E_limit_finite} and \cite[Lemma 3, (61)]{RathToth}.
This concludes the proof of \eqref{enough_for_smol_uniqueness}.
\end{proof}

Putting together the results of Proposition \ref{prop_Smol_uniqueness_general} and Lemma \ref{lemma_tree_gen_fn_symp},
we obtain the proof of Proposition \ref{prop_Smol_uniqueness_simplified}.

\section{Age evolution}\label{section_cluster_growth_age}

In \S\ref{section_cluster_growth_age} we prove our main results stated in \S\ref{subsection_intro_main_results}.
In order to do so, we need to  introduce some auxiliary Markov processes. In \S \ref{subsection_cgp_w_a} below
we will provide the detailed definitions of these processes and state some key lemmas about them. Let us now give a brief outline of the contents of
\S\ref{subsection_cgp_w_a}.

We first recall from \cite{CraneFreemanToth} the cluster growth process $(C_t)$ driven by a solution to the forest fire equations
(\eqref{smol_ff_eq}+ \eqref{smol_bc}) and augment it with its age process $(a_t)$ to obtain the \emph{cluster process with age} $(a_t,C_t)$ (see Definition \ref{def:cluster_growth_decorated_with_ages}).
We will identify the probability distribution $\pi_t$ that appears in the statements of our main results (Theorems \ref{thm_convergence}, \ref{thm_local_limit_mfff} and \ref{thm_age_pde})  as the marginal distribution of  $a_t$.

  The dynamics of the process
$(C_t)$ are approximated by the dynamics of the component size $(C^n_t(\rho^n))$ of a uniformly chosen vertex $\rho^n$ in the MFFF when $n \gg 1$. The two processes can be coupled in a way that with high probability they agree exactly except close to their burning or explosion times, see Theorem~\ref{CFTtheorem}.

 In Theorem \ref{T: asymptotic independence} we will upgrade the result of Theorem \ref{CFTtheorem}
and couple the cluster processes $(C^n_t(\rho_1^n), C^n_t(\rho_2^n) )$ of i.i.d.\ uniform vertices  $\rho_1^n$ and $\rho_2^n$ to a pair $( C_t^{(1)}, C_t^{(2)} )$ of i.i.d.\ copies of $(C_t)$.  In Proposition \ref{2rhocvg} we extend the coupling of Theorem \ref{T: asymptotic independence} to the setting where
 we keep track of the ages of the tagged vertices as well as the size of their clusters. In Corollary \ref{Cor: pointwise joint convergence} we conclude that, for any fixed $t$,
  $(  a_t^n(\rho_i^n), C_t^n(\rho_i^n))_{i \in \{1,2\}}$  weakly converge to i.i.d.\ copies  $( a_t^{(i)},C_t^{(i)})_{i \in \{1,2\}}$ of the joint distribution at time $t$ of the age $a_t$ and cluster size $C_t$ in the cluster process with age.

We prove concentration of the empirical age measure $\pi^n_t$ around $\pi_t$ using Corollary \ref{Cor: pointwise joint convergence}  and a second moment argument,  which gives the proof of  Theorem \ref{thm_convergence}.
The proof of the local weak limit statements of Theorem \ref{thm_local_limit_mfff} will also follow using the fact that given the ages, the graph that we see in the MFFFA at time $t$ is an age-driven IRG.

 In Lemma \ref{ConditionalCt} we show that conditional on $(a_s)_{0\le s\le t}$, the law of $C_t$ agrees with the size of the multitype Galton--Watson tree $T^{\pi_t}_{a_t}$ where the root has age $a_t$.
   Finally, we show in Lemma \ref{lemma:age_indeed_evolves_as_Markov} that the process $(a_t)_{t \geq 0}$ itself has the Markov property, and the proof of
 Theorem \ref{thm_age_pde} will follow as soon as we look at the Kolmogorov forward equations of $(a_t)_{t \geq 0}$ the right way.

\subsection{Cluster growth processes with age}
\label{subsection_cgp_w_a}

We extend the definition of the cluster growth process associated with the forest fire model from \cite[Definition 1.6]{CraneFreemanToth}, by augmenting it with the age of the root vertex. Denote by $E=(\mathbb{N}, d_E)$ the metric space with metric
$d_E(i,j)=|f(i)-f(j)|$, where $f(i)=1/i$ for $i \in \{2,3,4,\dots\}$ but $f(1)=0$. Thus $E$ is a compact metric space in which $\lim_{n \to \infty} n =1$, and $1$ is the only accumulation point of $E$. A function \hbox{$g: E  \to \mathbb{R}$} is continuous if and only if $\lim_{k \to \infty} g(k) = g(1)$.

\begin{definition}[Cluster growth process with age]\label{def:cluster_growth_decorated_with_ages}
Given an initial distribution $\pi_0$
 satisfying $\int u \,d\pi_0(u) < \infty$ that is age-critical or age-subcritical, denote by $(a_0, C_0)$ the joint realization
of the age of the root and the total number of vertices of $T^{\pi_0}$.  Let $v_k(0):=\mathbb{P}( |T^{\pi_0}|=k)$ and denote by $(\underline v(t))$ the corresponding solution of the critical forest fire equations (c.f.\ Proposition \ref{prop_Smol_uniqueness_simplified}).
We will use this solution to construct (in \S \ref{subsection_construction_of_cgp}) the \emph{cluster growth process with age} $(a_t,C_t)_{t \geq 0}$, which is an inhomogeneous $([0,\infty) \times E )$-valued continuous-time c\`adl\`ag Markov process
such that
\begin{itemize}
\item $a_t$ increases at rate $1$,
\item $C_t$ jumps from state $i$ with rate $i$, and conditional on jumping from $i$ at time $s$, it jumps to $i+k$ with probability $v_k(s)$,
\item whenever $C_t$ explodes, $C_t$ returns immediately to state $1$ and $a_t$ jumps immediately to $0$.
\end{itemize}
For any $t \in [0,\infty)$, we denote by  $\pi_t$ the distribution of $a_t$.
\end{definition}

The marginal process $(C_t: t \ge 0)$ is a Markov process on its own. It is the cluster growth process studied in \cite{CraneFreemanToth}, extended to allow a critical initial cluster size distribution $v_k(0) = \mathbb{P}(|T^{\pi_0}| = k)$ by Propositions~\ref{prop_Smol_uniqueness_simplified}~and~\ref{prop_RathToth_conv}.

Let us stress that the probability distribution $\pi_t$ from Definition \ref{def:cluster_growth_decorated_with_ages} will play the role of the $\pi_t$  in our main results
(i.e., Theorems \ref{thm_convergence}, \ref{thm_local_limit_mfff}, \ref{thm_age_pde}).

 Note that Definition \ref{def:cluster_growth_decorated_with_ages} implies
\begin{equation}\label{pi_t_finite_mean_eq_short}
\int u \,d\pi_t(u) = \mathbb{E}(a_t) \leq \mathbb{E}(a_0+t) = \int u \,d\pi_0(u) +t < \infty.
\end{equation}
Note that almost surely,  $(C_t: t \ge 0)$ has infinitely many explosion times, but the set of explosion times has no accumulation points (c.f.\ \cite[Lemma 3.16]{CraneFreemanToth}). In \cite[Section 3.4]{CraneFreemanToth} it is proved that for any $t,h \in \mathbb{R}_+$ we have
\begin{equation}\label{expected_number_of_explosions_and_phi}
\int_t^{t+h} \varphi(s)\, \mathrm{d}s=\mathbb{E}\left(\,  \# \left\{ \, s \in [t,t+h]  \, :   \, \text{$C$ explodes at time $s$}    \, \right\}  \,\right),
\end{equation}
where  $\varphi(s)$ is defined in Proposition \ref{prop_Smol_uniqueness_simplified} for $s \geq t_{\mathrm{gel}}$ and $\varphi(s)=0$ if $s < t_{\mathrm{gel}}$.

Let us recall \cite[Proposition 1.9]{CraneFreemanToth} about the marginal distributions of $C$. Again the proof of this extends to include our critical initial condition once we have Propositions~\ref{prop_Smol_uniqueness_simplified}~and~\ref{prop_RathToth_conv}.
\begin{proposition}[The law of $C_t$ is $\underline{v}(t)$]\label{prop_cft_cluster_marginal}
For all $t \geq 0$ and all $k \in \mathbb{N}$ the cluster growth process $C$ satisfies
\begin{equation}\label{eq_clgp_marginal_v_k_t}
\mathbb{P}[C_t=k]=v_k(t),
\end{equation}
 where
$(\underline v(t))$ denotes the solution of the critical forest fire equations (c.f.\ Proposition \ref{prop_Smol_uniqueness_simplified}) with initial condition $\underline{v}(0)$.
\end{proposition}

The main result of \cite{CraneFreemanToth} is stated in Theorem \ref{CFTtheorem} below.
Loosely speaking, Theorem \ref{CFTtheorem} states that  for large $n$, the time evolution of the cluster size of
 a uniformly chosen vertex $\rho^n$ in the self-organized critical MFFF looks like the cluster growth process $C$. In particular, burning times of $\rho^n$ correspond to explosion times of $C$.
For further intuitive discussion of the cluster growth process $C$ and how it arises from the MFFF,
 see \cite[\S 1.2]{CraneFreemanToth}.

\begin{definition}[Components and cardinalities at time $t$]
\label{def_comps_sizes_mfff_t}

$ $

Given a $\mathrm{MFFF}(n,\lambda)$ process (see Definition \ref{def:graphical_mfff})
 and a vertex $i \in [n]$, let $\mathcal{C}^n_t(i)$ denote the connected component of vertex $i$ in the graph at time $t$,  moreover let $C^n_t(i)$ denote the number of vertices in $\mathcal{C}^n_t(i)$, i.e., $C^n_t(i)=|\mathcal{C}^n_t(i)|$.
\end{definition}

The next theorem is \cite[ Theorem 1.7]{CraneFreemanToth}, extended to include the case where the limiting initial condition $\left(v_k(0)\right)$ is the law of $\left| T^{\pi_0}\right|$ for an age-critical $\pi_0$ with finite mean. The proof in \cite{CraneFreemanToth} extends to this case once we have Propositions~\ref{prop_Smol_uniqueness_simplified}~and~\ref{prop_RathToth_conv}.
\begin{theorem}[Coupling of $(C^n_t(\rho^n))$ and $(C_t)$]\label{CFTtheorem}
Let $(\mathcal{G}^n)$ be a sequence of MFFF processes satisfying the conditions of Proposition \ref{prop_RathToth_conv}, and let $\rho^n$ be a uniformly chosen vertex from $[n]$. There exists a coupling of $(C^n_t(\rho^n))$ and $(C_t)$ such that for any  $t_{\max}>0$,
$$\sup_{t \in [0, t_{\max}]} d_E\left( C^n_t(\rho^n), C_t\right)   \stackrel{\mathbb{P}}{\to} 0\quad \text{as} \quad n \to \infty.  $$
\end{theorem}

We will show a convergence result analogous to Theorem \ref{CFTtheorem} when we study the clusters of \emph{two} uniformly chosen vertices in a sequence of MFFF processes, and show that they are asymptotically independent.

\begin{theorem}[Coupling of $(C^n_t(\rho^n_i))_{i=1}^2$ and $(C^{(i)}_t)_{i=1}^2$] \label{T: asymptotic independence}
Let $(\mathcal{G}^n)$ be a sequence of MFFF processes as in Theorem \ref{CFTtheorem} and let $\rho^n_1,\rho^n_2$ be independent uniform choices of vertices from $[n]$. For each $n$ there exists a coupling of the process $(C^n_t(\rho^n_1), C^n_t(\rho^n_2);\,t\in[0,t_{\max}])$ to the process $(C_t^{(1)},C_t^{(2)};\,t\in[0,t_{\max}])$ consisting of two independent copies of the limit cluster growth process $(C_t)$, such that
\begin{equation}\label{pair_coupling_i_1_2}
\sup_{t \in [0, t_{\max}]} \d_E\left(C^n_t(\rho^n_i), C^{(i)}_t\right) \stackrel{\mathbb{P}}{\to} 0\quad \text{as $ \; \; n \to \infty$, for $i = 1,2$}.
\end{equation}
\end{theorem}

We will prove Theorem \ref{T: asymptotic independence} in \S \ref{subsection_indep_of_clust_proc}.

\begin{remark}\label{remark_propagation_of_chaos_for_C}
 Let us fix $k \in \mathbb{N}$. The proof of Theorem \ref{T: asymptotic independence} can be easily adjusted to the case of $(C^n_t(\rho^n_i))_{i=1}^k$, where $\rho^n_1,\dots, \rho^n_k$ are
i.i.d.\ with uniform distribution on $[n]$, and in this case the joint distributional limit  $(C^{(i)}_t)_{i=1}^k$ consists of $k$ i.i.d.\ copies of $(C_t)$. Thus, in the
terminology of statistical physics, the MFFF model has the \emph{propagation of chaos} property.

Note that $(C_t)$ fits into the framework of McKean--Vlasov equations with jumps. McKean--Vlasov equations are stochastic differential equations whose coefficients at each time depend on the law of their own solution at that time (similarly to our Proposition \ref{prop_cft_cluster_marginal}).
 McKean--Vlasov equations arise as weak limits of the motion of a tagged particle in an exchangeable interacting particle system  which, similarly to our Theorem \ref{T: asymptotic independence}, exhibit propagation of chaos (c.f.\ \cite{sznitman_propagation}).
\end{remark}

We now lift Theorem \ref{T: asymptotic independence} to the setting with ages. Let us stress that we start the MFFFA from an age-driven IRG with possibly random initial ages
(c.f.\ Assumption \ref{assumption_age_driven_initial_state}).

\begin{proposition}[Coupling of cluster size and age processes]\label{2rhocvg} Let $(\mathcal{G}^n)$ be a sequence of MFFFAs satisfying the conditions of Theorem \ref{thm_convergence},
 and let $\rho^n_1,\rho^n_2$ be i.i.d.\ uniformly chosen vertices in $[n]$. There is a coupling of $(\mathcal{G}^n, \rho^n_1,\rho^n_2)$ with $\left((a^{(i)}_t,C^{(i)}_t),\,t\in[0,t_{\max}]\right)_{i\in \{1,2\}}$,
which are two i.i.d.\ copies of the process defined in Definition \ref{def:cluster_growth_decorated_with_ages}, such that we have both \eqref{pair_coupling_i_1_2} and
$$
\sup_{t \in [0, t_{\max}]} \left|\frac{a^n_t(\rho^n_i)}{C^n_t(\rho^n_i)} - \frac{a^{(i)}_t}{C^{(i)}_t}\right| \stackrel{\mathbb{P}}{\to} 0\quad \text{as $ \; \; n \to \infty$, for $i = 1,2$.}
$$
\end{proposition}
In the coupling the age of the tagged vertex $\rho_i^n$ does not jump to $0$ at exactly the same times as the age $a_t^{(i)}$ jumps to zero, so there are short intervals during which one age is close to zero and the other is not. The odd-looking division of the age by the cluster size takes care of this, since during this interval where the ages disagree substantially, one cluster size is very large and the other age is very small.

We deduce Proposition~\ref{2rhocvg} from Theorem \ref{T: asymptotic independence} in \S \ref{subsection_joint_conv_of_cluster_growth_and_age}.

Proposition~\ref{2rhocvg} implies the following statement, which is in fact all we will need for the proof of Theorems \ref{thm_convergence}, \ref{thm_local_limit_mfff} and \ref{thm_age_pde}.

\begin{corollary}[Convergence of ages and cluster sizes at a fixed time]\label{Cor: pointwise joint convergence}
In the situation of Proposition~\ref{2rhocvg}, for each fixed $t \geq 0$  we have
$$\left(  a_t^n(\rho_i^n), C_t^n(\rho_i^n)\right)_{i \in \{1,2\}} \Rightarrow \left( a_t^{(i)},C_t^{(i)}\right)_{i \in \{1,2\}}\quad \text{as $ \;\; n \to \infty$,}$$ where the convergence in distribution is with respect to $( [0,\infty) \times E )^2$.
\end{corollary}
We will deduce Corollary \ref{Cor: pointwise joint convergence} from Proposition~\ref{2rhocvg} in \S \ref{subsection_joint_conv_of_cluster_growth_and_age}.

 Recall the notion of the empirical age measure $\pi^n_t$ from \eqref{pi_n_t_empirical_measure}.
In \S\ref{subsection_conc_emp_age_distr} we will use Corollary \ref{Cor: pointwise joint convergence} and a second moment argument to prove the convergence of $\pi^n_t \stackrel{\mathbb{P}}{\Rightarrow} \pi_t$ stated in Theorem \ref{thm_convergence}.

In \S\ref{subsection_proof_of_thm_local_limit_mfff} we will use Theorem \ref{thm_convergence} to deduce Theorem \ref{thm_local_limit_mfff} about the weak local convergence of the MFFF graph $\mathcal{G}^n_t$ to the MBP tree $T^{\pi_t}$.

  Recall from Definition \ref{def:branchingprocess} the notion of the age-driven MBP tree $T^\pi_s$ where the age of the root is specified to be $s$.

The following statement bears some resemblance to Theorem \ref{thm_mfff_inhomog}.
\begin{lemma}[$(C_t)$ and $(a_t)$ are intertwined]\label{ConditionalCt}
Conditional on $(a_s)_{0\le s\le t}$,\; $C_t\stackrel{d}= |T^{\pi_t}_{a_t}|$.
\end{lemma}

We will prove Lemma \ref{ConditionalCt} in \S \ref{subsection_intertwine_C_a_cgp}.

\begin{remark}\label{cgp_intertwined}
Similarly to Remark \ref{remark_MFFF_intertwined}, in the terminology of of \cite[\S 3.2]{swart_intertwining},
the Markov process $(C_t)$ is \emph{intertwined} on top of $(a_t)$ (c.f.\ \cite[Proposition 3.4]{swart_intertwining}).
\end{remark}

Recall the notion of the control function $\varphi(t)$ associated to the solution of the critical forest fire equations from Proposition \ref{prop_Smol_uniqueness_simplified}.
For $t < t_{\mathrm{gel}}$ we may assume that $\varphi(t)=0$.
Note that $\int s \, \mathrm{d} \pi_t(s) <\infty$ for all $t \geq t_{\mathrm{gel}}$ by \eqref{pi_t_finite_mean_eq_short}.
 For $t \geq t_{\mathrm{gel}}$ the probability distribution
 $\pi_t$ is age-critical  by Theorem \ref{thm_local_limit_mfff}\eqref{age_crit_after_gel}.
 Moreover, the eigenfunction $\theta_t(\cdot)$ of $\mathcal{L}_{\pi_t}$ corresponding to the eigenvalue $\lambda=1$ is uniquely defined for all $t \geq 0$ by Lemma \ref{L: operator properties}\eqref{L:eigen}.

Let us define an auxiliary $[0,\infty)$-valued continuous-time inhomogeneous Markov process that we will use to give an autonomous description of the time evolution of the ages in $(a_t,C_t)_{t \geq 0}$.

\begin{definition}[An auxiliary Markov process]\label{def:A_t_age_Markov_proc}
Let us define a Markov process $(A_t)_{t\geq 0}$, with initial distribution $A_0 \sim \pi_0$ and infinitesimal generator
\begin{equation}\label{inf_gen_A_t}
\lim_{h \to 0_+} \frac{1}{h}
\mathbb{E}\left( f(A_{t+h})-f(s)\, | \, A_t=s  \right)=
f'(s)+ \varphi(t)\theta_t(s) (f(0)-f(s)),
\end{equation}
for every $f:[0,\infty) \to \mathbb{R}$, a compactly supported smooth test function.
\end{definition}
So $A$ increases deterministically at speed $1$ and if $A_t=s$ then $A$ jumps to zero at time $t$ at rate $\varphi(t)\theta_t(s)$.

\begin{lemma}[The age process is Markov]\label{lemma:age_indeed_evolves_as_Markov}
The process $(a_t)_{t \geq 0}$ (defined in Definition \ref{def:cluster_growth_decorated_with_ages}) has the same law as the process  $(A_t)_{t \geq 0}$ (defined in Definition \ref{def:A_t_age_Markov_proc}).
\end{lemma}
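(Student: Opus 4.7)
My plan is to identify the generator of $(a_t)_{t\ge 0}$ as that of $(A_t)$, and then conclude by Markov uniqueness.

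First, $(a_t)$ is itself Markov: the joint process $(C_t, a_t)_{t\ge 0}$ is strongly Markov by construction, and by Corollary~\ref{ConditionalCt} the conditional law of $C_t$ given $(a_s)_{s\le t}$ depends only on $(t, a_t)$. Hence the conditional law of $(a_u)_{u\ge t}$ given $(a_s)_{s\le t}$ depends only on $a_t$. Between explosion times $\tau_\ell$ of $(C_t)$ the age advances at unit rate, and at each $\tau_\ell$ it resets to $0$, so the generator of $(a_t)$ has the form $\mathcal{L}_t f(s) = f'(s) + r_t(s)(f(0) - f(s))$, where $r_t(s)$ is the conditional rate of explosion of $(C_t)$ given $a_t = s$. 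Comparing with \eqref{inf_gen_A_t} it suffices to show $r_t(s) = \varphi(t)\theta_t(s)$.

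Because an explosion requires an accelerating cascade of jumps rather than a single transition, $r_t(s)$ is not a pointwise transition rate of a bounded generator on $\mathbb{N}$; instead I would interpret it as the flux of probability mass escaping toward $\infty$ conditional on $a_t = s$. Cutting off the state space at level $K$ and using Corollary~\ref{ConditionalCt} to distribute $C_t\mid a_t=s$ as $|T^{\pi_t}_s|$,
\[r_t(s) = \lim_{K\to\infty}\sum_{k=1}^{K-1} k\,\mathbb{P}(L_t \ge K - k)\,\mathbb{P}\bigl(|T^{\pi_t}_s| = k\bigr),\]
where $L_t$ has the jump-size distribution $v_\cdot(t)$ of the cluster growth process. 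By \eqref{v_k_polynomial_decay}, $\mathbb{P}(L_t \ge m) \sim \sqrt{2\varphi(t)/\pi}\,m^{-1/2}$ as $m\to\infty$. By the expansion $f_{\pi_t}(s,z) = 1 - \sqrt{2\varphi(t)}\,\theta_t(s)\sqrt{1-z} + O(1-z)$ of Lemma~\ref{lemma_tree_s_gen_fn_asymp} and a standard Tauberian theorem, $\mathbb{P}(|T^{\pi_t}_s| = k) \sim \tfrac{1}{2}\sqrt{2\varphi(t)/\pi}\,\theta_t(s)\,k^{-3/2}$ as $k\to\infty$, with $s$-uniform error coming from the $O(1-z)$ term in that lemma. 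Substituting these tails and viewing the sum as a Riemann sum under the change of variable $k = Ky$, it converges to a Beta integral:
\[r_t(s) = \frac{\varphi(t)}{\pi}\,\theta_t(s) \int_0^1 y^{-1/2}(1-y)^{-1/2}\,dy = \varphi(t)\,\theta_t(s).\]
The main obstacle is rigorously handling the contributions from small $k$, where the $k^{-3/2}$ asymptotic for $|T^{\pi_t}_s|$ may fail, and from $k$ close to $K$, where the tail asymptotic for $L_t$ fails. I would control these regimes by splitting the sum, invoking dominated convergence, and using the uniform error estimates of Lemma~\ref{lemma_tree_s_gen_fn_asymp}; a consistency check is that the unconditional version of the same computation (summing $k\,\mathbb{P}(L_t\ge K-k)v_k(t)$) must produce the total escape rate $\varphi(t)$, matching $\int\varphi(t)\theta_t(s)\,d\pi_t(s)=\varphi(t)$.

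With the generator of $(a_t)$ matched to $\mathcal{L}_t$ in \eqref{inf_gen_A_t}, and since the rate $\varphi(t)\theta_t(s)$ is bounded and continuous in $(t,s)$ (by Proposition~\ref{prop_Smol_uniqueness_general}, Lemma~\ref{L: operator properties}\eqref{L:eigen}, Corollary~\ref{cor_jensen}, and the continuity of $t\mapsto\theta_t$), the piecewise-deterministic Markov process with generator $\mathcal{L}_t$ is uniquely determined by its initial law. Both $(a_t)$ and $(A_t)$ share the initial law $\pi_0$, so $(a_t) \stackrel{d}{=} (A_t)$.
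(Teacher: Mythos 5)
Your overall strategy (show $(a_t)$ is Markov, identify its jump rate as $\varphi(t)\theta_t(s)$, conclude by uniqueness) is the right frame, and your Beta-integral computation is arithmetically consistent with the answer. But the central step is asserted rather than proved. The quantity you must control is the instantaneous conditional jump rate
$r_t(s)=\lim_{h\to 0^+}\tfrac1h\,\mathbb{P}\bigl(C\ \text{explodes in}\ [t,t+h]\ \big|\ a_t=s\bigr)$,
and you replace it, by fiat (``I would interpret it as the flux of probability mass escaping toward $\infty$''), with $\lim_{K\to\infty}$ of the level-$K$ crossing flux at time $t$. These are genuinely different objects and the identification is where all the difficulty lives: for any \emph{fixed} $K$, the contribution to the explosion rate from $\{C_t<K\}$ is zero (from a fixed state $k$ the explosion probability in $[t,t+h]$ is $1-\psi_{t+h}(t)^k=O(kh^2)=o(h)$), so for every fixed $K$ the entire rate comes from the event $\{C_t\gtrsim h^{-2}\}$, i.e.\ from mass already far above level $K$, not from crossings of $K$ after time $t$. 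Equating the two therefore requires an interchange of the limits $h\to0$ and $K\to\infty$, which would have to be justified by a crossing/renewal argument (each explosion is preceded by exactly one up-crossing of $K$, displaced by a random time that must be shown to vanish uniformly as $K\to\infty$, and the displacement also shifts the age one conditions on). None of this is in your proposal, and it is comparable in difficulty to the direct computation. A secondary soft spot: the local asymptotic $\mathbb{P}(|T^{\pi_t}_s|=k)\sim\tfrac12\sqrt{2\varphi(t)/\pi}\,\theta_t(s)k^{-3/2}$ does not follow from ``a standard Tauberian theorem'' applied to Lemma~\ref{lemma_tree_s_gen_fn_asymp}; Tauberian arguments plus monotonicity give only the tail $\mathbb{P}(|T^{\pi_t}_s|\ge k)$, and a local estimate needs either analytic transfer in a $\Delta$-domain or a reformulation of your sum by Abel summation in terms of tails (the latter would fix this point).

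For comparison, the paper avoids level crossings entirely: by the branching property, $\mathbb{P}(\text{no explosion in }[t,t+h]\mid C_t=k)=\psi_{t+h}(t)^k$ with $\psi_{t+h}(t)$ the characteristic-curve survival probability, so by Corollary~\ref{ConditionalCt} the conditional non-explosion probability given the age history is $\mathbb{E}\bigl(\psi_{t+h}(t)^{|T^{\pi_t}_{a_t}|}\mid a_t\bigr)$. Two asymptotic inputs then give \eqref{want_jump_rate} directly: the expansion $\psi_{t+h}(t)=1-\tfrac{\varphi(t)}{2}h^2+o(h^2)$ (obtained by matching the unconditional explosion rate $\varphi(t)$ with \eqref{gen_fn_phi_t_pi_t_asymp} and \eqref{eq:phiexpression}), and the square-root singularity \eqref{f_s_expansion}, which is used only through real $z\nearrow1$, so no local limit theorem is needed. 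If you want to salvage your route, you would need to supply the crossing-to-explosion argument with uniform control; otherwise the generating-function route via $\psi_{t+h}(t)$ is the efficient way to close the gap.
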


 We will prove Lemma \ref{lemma:age_indeed_evolves_as_Markov} in \S \ref{subsection_age_is}.
As an ingredient of that proof, we will also prove $\varphi(t)=(\int \theta_t(s)^3\, \mathrm{d}\pi_t(s))^{-1}$ when $t\ge t_{\mathrm{gel}}$ (i.e., \eqref{eq:phiexpression}).

\begin{remark}
Similarly to  Remark \ref{remark_propagation_of_chaos_for_C}, the McKean--Vlasov analogy applies to the age process $(a_t)$, which arises as the weak limit of the motion of a tagged particle in an exchangeable interacting particle system which exhibits propagation of chaos (c.f.\ Proposition \ref{2rhocvg}). In addition, $(a_t)$ is a Markov process governed by its own distribution: on the one hand the coefficient $\varphi(t)\theta_t(s)$ in  the infinitesimal generator \eqref{inf_gen_A_t} (which governs $(a_t)$ by Lemma \ref{lemma:age_indeed_evolves_as_Markov})  is determined by  $\pi_t$ (c.f.\  \eqref{eq:phiexpression} and Lemma \ref{L: operator properties}\eqref{L:eigen}), on the other hand $\pi_t$ is the distribution of $a_t$ (c.f.\ Definition \ref{def:cluster_growth_decorated_with_ages}).
\end{remark}

Finally, in \S\ref{subsection_proof_of_age_pde} we deduce Theorem~\ref{thm_age_pde}, i.e., that $(\pi_t)$ satisfies the age differential equations,
which turn out to be closely related to the Kolmogorov forward equations of the Markov process $(a_t)$.


\subsection{Construction of the cluster growth process with age}
\label{subsection_construction_of_cgp}

In our  construction of $(a_t,C_t)$ satisfying Definition \ref{def:cluster_growth_decorated_with_ages},  we rely on \cite[Lemma 3.16]{CraneFreemanToth} about the almost sure explosion in finite time of $C_t$, where the stated hypotheses require the initial condition to be subcritical. However, the proof of \cite[Lemma 3.16]{CraneFreemanToth}  extends to the case of a critical initial cluster size distribution $v_k(0) = \mathbb{P}(|T^{\pi_0}| = k)$
 once we have Propositions~\ref{prop_Smol_uniqueness_simplified}~and~\ref{prop_RathToth_conv}.

We give a doubly-inductive construction, using an array $(\epsilon_{\ell,m})_{\ell \in \mathbb{N}_0 ,m \in \mathbb{N}}$ of independent $\mathrm{Exp}(1)$ r.v.s and an independent array of $(U_{\ell,m})_{\ell \in \mathbb{N}_0 ,m \in \mathbb{N}}$ of independent $U[0,1]$ r.v.s. The pair $(a_0,C_0)$ is random and independent of these arrays, jointly distributed as the root age and the number of vertices of the age-driven MBP tree $T^{\pi_0}$, defined in Definition~\ref{def:branchingprocess}.

 We construct $(C_t)_{t>0}$ given $C_0$, by concatenating a countably infinite sequence of time-inhomogenous continuous-time branching processes, each of which almost surely explodes in finite time. The outer induction is indexed by $\ell \ge 0$ and for $\ell \geq 1$ the $\ell^{th}$ explosion time will be denoted $\tau_\ell$. Given some $\ell \geq 0$, the inner induction constructs the $\ell^{\textup{th}}$ branching process which jumps at times $\left(J_{\ell,m}\right)_{m \in \mathbb{N}}$.

We define $J_{0,0}=0$.  If $\ell \ge 1$ then we suppose that $\tau_{\ell}$ has already been defined and $C_t$ has already been constructed up to time $\tau_{\ell}$, and
 define $J_{\ell, 0} := \tau_{\ell}$ if $\ell \ge 1$.
If $\ell = 0$ then define $C_{J_{\ell,0}}: = C_0$, and if $\ell \ge 1$ then we define $C_{J_{\ell,0}} = C_{\tau_{\ell}} : = 1$. Given some $\ell \geq 0$, we recursively define  for each  $m \ge 1$
\begin{align}
\label{constr_first}
J_{\ell, m} & :=  J_{\ell,m-1} + \frac{\epsilon_{\ell,m}}{C_{J_{\ell,m-1}}} \,,   \\
C_t & :=  C_{J_{\ell,m-1}}, \quad \text{for}\;\;t \in [J_{\ell,m-1},J_{\ell,m})\,,   \\
\label{constr_third}  C_{J_{\ell,m}} & :=  C_{J_{\ell,m-1}} + \chi(J_{\ell, m}, U_{\ell,m}) \,,    \text{ where }
\end{align}
\begin{equation}\label{def_eq_chi_t}
  \chi : [0,\infty)\times [0,1) \to \mathbb{N}_+ \, , \quad     \chi(t,x)  =  k,\,\forall x\in\left[\sum_{\ell=1}^{k-1}v_\ell(t), \sum_{\ell=1}^k v_\ell(t) \right).
 \end{equation}
 So $\chi(t,\cdot)$ is the inverse of the cumulative distribution function which corresponds to the distribution $\underline v(t)$.
We define $\tau_{\ell+1} := \lim_{m \to \infty} J_{\ell,m}$. Lemma 3.16 in \cite{CraneFreemanToth} shows that the properties of $(v_k(s))_{k=1}^\infty$ (see in particular \eqref{v_k_polynomial_decay}) imply that $\tau_\ell$ is almost surely finite.
Since  $\tau_{\ell+1} - \tau_\ell \ge J_{\ell,1} - \tau_\ell = \epsilon_{\ell,1}$, we have $\tau_{\ell} \to \infty$ as $\ell \to \infty$. The inductive construction therefore constructs $C_t$ for all $t \ge 0$ almost surely.
We have $\lim_{h \to 0_+} C_{\tau_\ell-h}=\infty$ w.r.t.\ the usual topology of $\mathbb{N}$. This means that $\lim_{h \to 0_+} C_{\tau_\ell-h}=1$ w.r.t.\ the topology of $E$, so $(C_t)$ is continuous from the left at time $\tau_\ell$, since $C_{\tau_\ell}=1$ .

We now construct $a_t$ for all $t > 0$. Denote $\tau_0:=-a_0$ and define the age $a_t$ at time $t$ by
\begin{equation}\label{a_t_L_t_def}
a_t:= t - \tau_{L_t}, \quad \text{where} \quad
L_t:= \max\{\, \ell \geq 0 \, : \, \tau_\ell \leq t \, \}.
\end{equation}
This completes the construction of $(a_t,C_t)$.
The marginal $(C_t)$ is precisely the cluster growth process of \cite{CraneFreemanToth}, therefore the properties of $(C_t)$ discussed in Section \ref{subsection_cgp_w_a}
apply to it.

\subsection{Asymptotic independence of cluster processes}
\label{subsection_indep_of_clust_proc}

 \S\ref{subsection_indep_of_clust_proc} is devoted to the proof of Theorem \ref{T: asymptotic independence}. 

Recall from Definition \ref{def_comps_sizes_mfff_t} the notion of the connected cluster $\mathcal{C}^n_t(i)$ of vertex $i$ at time $t$ as well as the size $C^n_t(i)$ of this cluster.

Our proof of Theorem \ref{T: asymptotic independence} proceeds by showing that the evolutions of $C^n_t(\rho^n_1)$ and $C^n_t(\rho^n_2)$ are well-approximated by the limiting cluster process whenever they are not too large, and are approximately independent except when $\mathcal{C}^n_t(\rho^n_1)= \mathcal{C}^n_t(\rho^n_2)$. The following lemma controls the probability of this pathological event.

\begin{lemma}[The clusters of $\rho^n_1$ and $\rho^n_2$ do not merge] \label{two cpts not same}
Let
\begin{equation}\label{merge_time_def}
\tau^n:=\inf\{\,t\ge 0\,:\, \mathcal C^n_t(\rho^n_1)=\mathcal C^n_t(\rho^n_2)\,\}=\inf\{\,t\ge 0\,:\,  \rho^n_2 \in  \mathcal C^n_t(\rho^n_1) \,\}  \, .
\end{equation}
Then for any $t_{\max}>0$ we have
\begin{equation}\label{eq:CvnotCw}
\lim_{n\rightarrow\infty}\Prob{ \, \tau^n\le t_{\max} \, }=0.
\end{equation}
\end{lemma}
\begin{proof}
We begin by observing that  the model  asymptotically almost surely never includes a giant component, i.e.,  for any $\epsilon>0$,
\begin{equation}\label{eq:nogiantcpt}
\lim_{n\rightarrow\infty}\Prob{\, \exists t\in[0,t_{\max}],\,\exists v\in[n]\text{ s.t. }C^n_t(v) >  n \epsilon \, }=0.
\end{equation}
The proof of \eqref{eq:nogiantcpt} follows from Propositions \ref{prop_Smol_uniqueness_simplified} and \ref{prop_RathToth_conv}, as we now explain.
Recall the notion of the solution $(v_k(t))$ of the critical forest fire equations (\eqref{smol_ff_eq}+\eqref{smol_bc}). Recall the notion
$v^n_k(t)$ of the fraction of vertices contained in components of size $k$ at time $t$ from \eqref{eq:defnvn}.
It follows from \eqref{smol_bc} and Dini's theorem that there exists $K \in \mathbb{N}$ such that $\sum_{k=1}^{K} v_k(t) \geq 1-\frac{\epsilon}{2}$ for all $t \leq t_{\max}$.
Then we can use \eqref{sup_conv_v_n_k_t_to_v_k_t} to derive
 \begin{equation}\label{most_mass_in_small_components}
  \lim_{n \to \infty } \mathbb{P}\left( \, \inf_{t \leq t_{\max}}\sum_{k=1}^{K} v^n_k(t) \geq 1-\epsilon\, \right)=1.
 \end{equation}
Now \eqref{eq:nogiantcpt} follows from \eqref{most_mass_in_small_components} as soon as we observe that if $ n \epsilon > K$ and if $C^n_t(v) >  n \epsilon $ for some $t \leq t_{\max}$ and $v \in [n]$ then
$\sum_{k=1}^{K} v^n_k(t) < 1-\epsilon$.

Next, consider the number $\mathcal{N}$ of fires involving $\rho^n_1$ up to time $t$. Since $C^n(\rho^n_1)$ spends an $\mathrm{Exp}(1+\lambda(n))$ holding time at size 1 after each fire, $\mathcal{N}$ is stochastically bounded by $1+\mathrm{Poisson}(t_{\max}(1+\lambda(n)))$. So we can bound in expectation the total number $\mathcal{M}^n_{t_{\max}}$ of vertices which are ever in the same component as $\rho^n_1$ on time interval $[0,t_{\max}]$ as
\begin{multline*}
\mathbb{E}\left( \mathcal{M}^n_{t_{\max}} \right)=
\E{\left|\bigcup_{t\in[0,t_{\max}]} \mathcal{C}^n_t(\rho^n_1)\right|}\le \\
n\Prob{ \, \exists t\in[0,t_{\max}], \, C^n_t(\rho^n_1)\ge  n \epsilon \, } +
 \left(1+t_{\max}(1+\lambda(n))\right) n \epsilon .
\end{multline*}
Since this holds for all $\epsilon>0$, using \eqref{eq:nogiantcpt} we obtain $\lim_{n\rightarrow\infty} \frac{1}{n}\mathbb{E}\left( \mathcal{M}^n_{t_{\max}} \right)=0 $,
from which \eqref{eq:CvnotCw} follows as soon as we note that $\rho^n_1$ and $\rho^n_2$ are i.i.d.\ with uniform distribution on $[n]$.
\end{proof}

\begin{proof}[Proof of Theorem \ref{T: asymptotic independence}]
The proof of Theorem \ref{CFTtheorem} (c.f.\ \cite[\S 4]{CraneFreemanToth}) involves a coupling of the pair $(C^n(\rho^n_1),C)$ such that these processes are close with respect to $d_E$ throughout $[0,t_{\max}]$ with high probability. This means they are equal at all times when neither is very large, and when either is very large then the other is either very large or equal to $1$. We will restate this coupling, and explain how it may be extended to a coupling of \newline $(C^n(\rho^n_1), C^n(\rho^n_2), C^{(1)}, C^{(2)})$, where the final two processes are independent copies of the limit cluster growth process $C$. We will follow the style of \cite{CraneFreemanToth} and state the coupling in explicit detail, from which it will be easy to derive the rigorous results required.

As in \cite[\S 4]{CraneFreemanToth}, we work with a modified version of the MFFF process, which alters the rates of clocks on edges within clusters, and introduces clocks on loop edges (that is, edges from a vertex to itself). Precisely, we demand:
\begin{equation}\label{clock_trick1}
\begin{array}{c}
\text{edge clocks \emph{within a cluster} ring at rate $2/n$ rather than $1/n$} \\
\text{and each loop edge has an independent clock with rate $1/n$.}
\end{array}
\end{equation}
These adjustments have no effect on the evolution of the cluster sizes, but simplify the statements of some Poisson process calculations to follow.

For each $n$ we define the (random) function $\chi^n: [0,\infty) \times [0,1) \to \mathbb{N}$ similarly to \eqref{def_eq_chi_t}:
\begin{equation}\label{eq:defnchin}
\chi^n(t,x)  =  k,\,\forall x\in\left[\sum_{\ell=1}^{k-1}v_\ell^n(t), \sum_{\ell=1}^k v^n_\ell(t) \right).
\end{equation}
 We also introduce a truncation parameter $K\in\N$, which will be taken large enough at the end.

We now describe the coupling of $(C^n(\rho^n_1),\tilde C,S)$, where $\tilde{C}$ will be a copy of the cluster growth process $C$, moreover we augment the coupling with a \emph{failure parameter} $S\in\{0,1\}$, which, informally, will switch from $0$ to $1$ at a stopping time when the coupling of $C^n(\rho^n_1)$ and $\tilde{C}$ ceases to be effective.

\begin{enumerate}[i)]
\item Sample $U_0\sim U[0,1]$, and set $\tilde C_0=\chi(0,U_0)$, and select $\rho^n_1$ uniformly at random from the vertices $v\in[n]$ for which $|\mathcal{C}^n_0(v)|=\chi^n(0,U_0)$, so $C^n_0(\rho^n_1)=\chi^n(0,U_0)$. From now on, we write $C^n_t$ for $C^n_t(\rho^n_1)$. If $\tilde C_0=C^n_0$, set $S=0$, otherwise, set $S=1$.
\item Enumerate the burning times and growth times of $\mathcal{C}^n_t(\rho^n_1)$ as follows. Set $I_{0,0}=0$ and inductively
\begin{equation}\label{eq:defnIa}
I_{\ell,0}:=\inf\{ \, s>I_{\ell-1,0}\,:\, \mathcal{C}^n(\rho^n_1)\text{ is burned at time }s \, \},\quad \ell \ge 1.
\end{equation}
So $I_{\ell,0}$ is the time of the $\ell$th fire involving $\rho^n_1$. Now, define $I_{\ell,1}\le I_{\ell,2}\le \ldots \le I_{\ell,g_\ell}$ to be the sequence of times between $I_{\ell,0}$ and $I_{\ell+1,0}$ at which an edge clock rings, for which at least one incident vertex is in $\mathcal{C}^n(\rho^n_1)$ at that time. This sequence includes times when a clock corresponding to an edge \emph{within} $\mathcal{C}^n_t(\rho^n_1)$ rings, which doesn't lead to a change in $C^n_t$.

\item For every $\ell \ge 0$, $ m \in[1,g_\ell]$, as a result of the assumption \eqref{clock_trick1}, the random variable $I_{\ell,m}-I_{\ell,m-1}$ has exponential distribution
with rate $C^n_{I_{\ell,m-1}}(\rho^n_1)$.  Moreover we may assume that the edge whose clock rings at time $I_{\ell,m}$ is $(i_{\ell,m},j_{\ell,m})$, where $i_{\ell,m}$ is sampled uniformly from $\mathcal{C}^n_{I_{\ell,m-1}}(\rho^n_1)$ and, independently, $j_{\ell,m}$ is sampled uniformly from $[n]$. So if we define $L^n_{\ell,m}:= \chi^n(I_{\ell,m}-, U_{\ell,m})$, where $(U_{\ell,m},\,\ell \ge 0, m \geq 1)$ are i.i.d.\ $U[0,1]$ variables, we may assume that $j_{\ell,m}$ is chosen uniformly from those vertices $v\in[n]$ for which $C^n_{I_{\ell,m}-}(v)=L^n_{\ell,m}$.

\item For each growth time $I_{\ell,m}$, if $\max(C^n_{I_{\ell,m}-},\tilde C_{I_{\ell,m}-})\le K$ and $S=0$, we do the following. Set $\tilde L_{\ell,m}:= \chi(I_{\ell,m}-, U_{\ell,m})$, and $\tilde C_{I_{\ell,m}}=\tilde C_{I_{\ell,m}-} + \tilde L_{\ell,m}$. Then
\begin{itemize}
\item if $\tilde L_{\ell,m}\ne L^n_{\ell,m}$, set $S=1$;
\item if $j_{\ell,m}\in \mathcal{C}^n_{I_{\ell,m}-}(\rho^n_1)$, then $C^n_{I_{\ell,m}}=C^n_{I_{\ell,m}-}$, and set $S=1$;
\item otherwise, we have $C^n_{I_{\ell,m}}=C^n_{I_{\ell,m}-}+L^n_{\ell,m}$ and $\tilde C_{I_{\ell,m}}=\tilde C_{I_{\ell,m}-} + \tilde L_{\ell,m}$ (in this case we have $\tilde L_{\ell,m} = L^n_{\ell,m}$) and we retain $S=0$.
\end{itemize}

\item Whenever $\max(C^n_t,\tilde C_t)> K$, or when $S=1$, then let the two processes evolve independently.
 During this, preserve the value of $S$, \emph{unless} one process jumps away from $1$ while the other process is greater than $K$. If this happens while $S=0$, move to the failure state $S=1$.

\item For each burning time $I_{\ell,0}$, if $C^n_{I_{\ell,0}-}\le K$, set $S=1$ thereafter.
\end{enumerate}

The effect of these dynamics is that while $S=0$, we either have $C^n_t=\tilde C_t\le K$, or
\begin{equation*}
(C^n_t,\tilde C_t) \in  \{K+1,\ldots\}^2\cup \left(\{1\}\times\{K+1,\ldots\}\right)\cup\left(\{K+1,\ldots\}\times\{1\}\right).
\end{equation*}
Crucially, at all times the evolution of $\tilde C$ matches with the construction of the cluster growth process given in \S \ref{subsection_construction_of_cgp}, and so we have
\begin{equation}\label{replicate_C}
\text{ $\tilde C \stackrel{d}= C$}.
\end{equation}

\medskip

For the purposes of this article, we require a similar coupling \newline $(C^n(\rho^n_1), C^n(\rho^n_2), \tilde C^{(1)}, \tilde C^{(2)})$ for two independently chosen watched vertices $\rho_1, \rho_2\in[n]$, which we now describe. The use of the `phantom clocks', though notationally heavy, ensures that independence of $\tilde C^{(1)},\tilde C^{(2)}$ is immediate.
\begin{itemize}
\item The evolution of $(C^n(\rho^n_1),\tilde C^{(1)})$ is exactly as in the original coupling, with failure parameter $S^{(1)}$, and the additional stipulation that whenever $\tilde C^{(1)}$'s evolution was specified to be independent of $C^n(\rho^n_1)$, it is now also independent of $C^n(\rho^n_2)$ and $\tilde C^{(2)}$.
\item The construction of $\tilde C^{(2)}$ will mostly be the same, but we require extra machinery to ensure that it stays independent of $\tilde C^{(1)}$. The  addition required is a family of independent \emph{phantom clocks}, all with rate $1/n$, attached to edges between $\mathcal{C}^n_t(\rho^n_1)$ and $\mathcal{C}^n_t(\rho^n_2)$ for $t \leq \tau^n$ (see \eqref{merge_time_def}). We define the rate of phantom clocks
to be zero for $t> \tau^n$.

\item Now, to determine the evolution of $\tilde C^{(2)}$ before $\tau^n$, we use the regular edge clocks from the forest-fire model associated to the edges between $\mathcal{C}^n_t(\rho^n_2)$ and $[n]\backslash \mathcal{C}^n_t(\rho^n_1)$, but use the phantom clocks between $\mathcal{C}^n_t(\rho^n_1)$ and $\mathcal{C}^n_t(\rho^n_2)$.

\item We define $\bar{\tau}^n$ to be the first time that a phantom clock rings, which we think of as a \emph{phantom growth time} for $\mathcal{C}^n(\rho^n_2)$. Note that $\bar{\tau}^n \geq \tau^n$ implies $\bar{\tau}^n=\infty$.  We also define $\bar I_{0,0}=0$ and the burning times $(\bar I_{\ell,0},\,\ell\ge 1)$ for $\mathcal{C}^n_t(\rho^n_2)$ exactly as in \eqref{eq:defnIa}, and the growth times $(\bar I_{\ell,m},\,\ell \ge 0, m=1,\ldots,\bar g_\ell)$ similarly, \emph{up to and including} the phantom growth time $\bar\tau^n$. It remains the case, as in iii), that whenever a regular (non-phantom) clock attached to the component of $\rho^n_2$ rings, the incident vertices may be given by a uniform choice from $\mathcal{C}^n_{\bar I_{\ell,m}-} (\rho^n_2)$, and a uniform choice from $[n]$.
\item We define the evolution of $\tilde C^{(2)}$, and its failure parameter $S^{(2)}$ using the burning times and growth times $(\bar I_{\ell,m})$ exactly as in iv), v), vi) above, moreover we add the stipulation that $S^{(2)}$ moves to the failure state $S^{(2)}=1$ at $\tau^n\wedge \bar\tau^n$. So $\tilde C^{(2)}$ evolves independently of the forest-fire model and of $\tilde C^{(1)}$ whenever $\max(C^n_t(\rho^n_2),\tilde C^{(2)})>K$, or when $S^{(2)}=1$.
It follows, analogously to \eqref{replicate_C}, that $\tilde C^{(2)}$ is a copy of $C$.
\item The jumps of $\tilde C^{(1)}$ and $\tilde C^{(2)}$ are independent, as we now explain. In the case where all $C^n_t(\rho^n_1),C^n_t(\rho^n_2),\tilde C_t^{(1)},\tilde C_t^{(2)}\le K$ and $S^{(2)}=0$, the jumps of $\tilde C^{(1)}$ and $\tilde C^{(2)}$ are driven by disjoint sets of edge clocks in the underlying forest fire model. This is because $\mathcal C^n_t(\rho^n_1)$ and $\mathcal C^n_t(\rho^n_2)$ are disjoint if $S^{(2)}=0$, and the true edges between them drive $\tilde C^{(1)}$, while the phantom edges drive $\tilde C^{(2)}$. The other cases are independent by definition, since at least one of $\tilde C^{(1)},\tilde C^{(2)}$ is evolving independently of everything else. In other words,
\begin{equation}\label{iid_C_copies}
\text{ $\tilde C^{(1)}$ and $\tilde C^{(2)}$ are  i.i.d.\ copies of $C$.}
 \end{equation}
\end{itemize}

Theorem 1.7 of \cite{CraneFreemanToth} shows that for any $\epsilon>0$, for large enough $K$ we have
\begin{equation}\label{cft_coupling_effective}
\lim_{n\rightarrow\infty}\Prob{\sup_{t\in[0,t_{\max}]} d_E\left(C^n_t(\rho^n_1),\tilde C_t^{(1)} \right)>\epsilon}=0.
\end{equation}

Now note that while $S^{(2)} = 0$, the times $\bar \tau^n$ and $\tau^n$ are reached at the same (random, time-dependent) rates, being given by the number of real and phantom edges, respectively, between the two watched clusters.
Since the rate of the phantom clocks is set to zero after $\tau^n$, if $\bar{\tau}^n \geq \tau^n$ then $\bar{\tau}^n=\infty$.
 Hence $\bar\tau^n$ stochastically dominates $\tau^n$, and so using Lemma \ref{two cpts not same}, we have, as $n\rightarrow\infty$,
\begin{multline*} \Prob{\tau^n\wedge \bar\tau^n \le t_{\max}}= \\
\Prob{ \{ \tau^n  \le t_{\max} \} \cup \{ \bar\tau^n  \le t_{\max} \} }
\le 2\Prob{\tau^n\le t_{\max}}\;\rightarrow\; 0.
\end{multline*}

Then we may obtain, similarly to \eqref{cft_coupling_effective}, that
\begin{equation}\label{cft_2_coupling_effective}
\lim_{n\rightarrow\infty}\Prob{\sup_{t\in[0,t_{\max}]} d_E\left(C^n_t(\rho^n_2),\tilde C_t^{(2)}\right)>\epsilon}\le
 0+ \lim_{n\rightarrow\infty} \Prob{\tau^n \wedge \bar \tau^n\le t_{\max}} = 0.
\end{equation}
and so the proof of Theorem \ref{T: asymptotic independence} follows from \eqref{iid_C_copies}, \eqref{cft_coupling_effective} and \eqref{cft_2_coupling_effective}.
\end{proof}

\subsection{Coupling of cluster size and age processes}
\label{subsection_joint_conv_of_cluster_growth_and_age}

\S \ref{subsection_joint_conv_of_cluster_growth_and_age} is devoted to the proof of Proposition \ref{2rhocvg} and Corollary \ref{Cor: pointwise joint convergence}.

We want to deduce the result about the coupling of the age processes from the properies of the coupling of the cluster size processes of the watched vertices (i.e., Theorem \ref{T: asymptotic independence}). The $t=0$ case will follow from Proposition \ref{2agecvgprop}. As for the proof of the coupling of the whole age processes, the main idea is that
 the last burning time (c.f.\ Definition \ref{def:graphical_mfff}) of the vertex $\rho^n_i$
 can almost be read off from the evolution of the cluster size $C^n_t(\rho^n_i)$, since it is the
same as the elapsed time since the cluster size last dropped to one, except in the
unlikely case that lightning strikes the vertex $\rho_i^n$ while it is a singleton.

We will slightly modify the notion of the age of a vertex $j$ at time $t$ in the MFFF (c.f.\ Definition \ref{def:ages}).

\begin{definition}[Modified ages]
For $t>0$, let $\bar a^n_t(j)$ denote  the minimum of
$a^n_0(t)+t$ and the time that has elapsed since the last burning time $s$ of vertex $j$ for which $C^n_{s-}(j)>1$ (and we let $s:=-\infty$ if $[0,t]$ does not contain
such a burning time).
\end{definition}
 In other words, $\bar a^n_t(j)$ ignores instances where vertex $j$ is struck by lightning while a singleton.

We will prove Proposition \ref{2rhocvg} by first showing in \eqref{eq:no_lightning_singletons} that with asymptotically high probability the above modification of ages does not actually change the ages during $[0,t_{\max}]$. Then we will show that the analogue of Proposition \ref{2rhocvg} holds if we use the modified ages.

\begin{proof}[Proof of Proposition~\ref{2rhocvg}]
The rate of lightning striking the vertices $\rho^n_1$ and $\rho^n_2$ while they are singletons is $\lambda(n)\ll 1$ (by assumption \eqref{lightning_critical_regime}), thus we have
\begin{multline}\label{eq:no_lightning_singletons}
\Prob{\, \bar a^n_t(\rho_i^n)=a^n_t(\rho_i^n),\, i\in \{1,2\},\,   t\in[0,t_{\max}]\, } \ge \\ \exp\left(-2\lambda(n)t_{\max}\right) = 1-o_n(1).
\end{multline}

For $i \in \{1,2\}$ we have
\begin{align*}
\bar a^n_t(\rho^n_i)&= t - (-a^n_0(\rho^n_i)) \vee \sup\{ \, u\le t\,:\, C^n_{u-}(\rho^n_i)>1,\, C^n_u(\rho^n_i)=1 \, \},\\
 a^{(i)}_t &= t- (-a^{(i)}_0) \vee \sup\{ \, u\le t\,:\, C^{(i)}_{u-}>1,\, C^{(i)}_u=1 \, \}\,.
\end{align*}
The key point here is that in both cases the modified age process is a function of the initial age and the cluster size process.

We will improve the coupling that we gave in the proof of Theorem~\ref{T: asymptotic independence}, by augmenting it with initial ages.

We will show that for any $\epsilon, \delta > 0$ there exists $n_0$ such that for any $n \geq n_0$ the event that $ \sup_{t \in [0, t_{\max}]} \left| \bar a^n_t(\rho^n_i) /C^n_t(\rho^n_i) - a^{(i)}_t / C^{(i)}_t \right| \leq \epsilon$ occurs for both $i=1,2$ is greater than or equal to $1-8\delta$.

We use Proposition~\ref{2agecvgprop} to control the initial coupling at time $0$. We couple the pairs $(a_0^n(\rho_i^n),C_0^n(\rho_i^n))_{i \in \{1,2\}}$  to
  $(a_0^{(i)} ,C_0^{(i)})_{i \in \{1,2\}}$ so that for $n$ large enough we have $|a_0^n(\rho_i^n) - a_0^{(i)} | < \epsilon$ and $C_0^n(\rho_i^n) = C_0^{(i)}$ for both $i=1,2$ with probability at least $1-\delta$.

Choose $a_{\max}$ sufficiently large that $\pi_0([a_{\max},\infty)) < \delta$, so with probability at least $1-2\delta$ we have $a_0^{(i)} < a_{\max}$ for $i=1,2$, and for sufficiently large $n$, with probability at least $1 - 3\delta$ we have $a_0^n(\rho_i^n) < a_{\max}$ for $i=1,2$. Suppose that these events occur. Then all four ages cannot exceed $a_{\max} + t_{\max}$ during the time interval $[0,t_{\max}]$. Recall the threshold parameter $K$ in the coupling of Theorem \ref{T: asymptotic independence}, and set  $K > (a_{\max} + t_{\max})/\epsilon$.  Then whenever $C_t^{(i)} \ge K$, we have $a_t^{(i)}/C_t^{(i)} < \epsilon$, and similarly whenever $C_t^n(\rho_i^n) \ge K$, we have $\bar a_t^n(\rho_i^n)/C_t^n(\rho_i^n) < \epsilon$.
Take $n$ large enough that with this choice of $K$ the probability of coupling failure occurring at a strictly positive time $t\le t_{\max}$ is at most $\delta$.

Finally, we will bound the probability that some pair of  corresponding explosion and burning times in the coupling differ by at least $\epsilon$.
Because the inter-explosion times  of $C_t^{(i)}$ stochastically dominate a sequence of independent $\mathrm{Exp}(1)$ random variables, there exists $N$ depending on $t_{\max}$ and $\delta$ such that with probability at least $1-\delta$, $C_t^{(i)}$ explodes at most $N$ times during $[0,t_{\max}]$. If we take $K$ sufficiently large then (conditional on coupling success) the probability that the $\ell^{th}$ burning time of $\rho_i^n$ is at least $\epsilon$ away from the $\ell^{th}$ explosion time of $C^{(i)}$ is at most $\delta/(2N)$ for $n$ sufficiently large. This follows from \cite[Lemmas 4.1 and 4.2]{CraneFreemanToth}, which say that the watched cluster burns quickly with high probability once it is sufficiently large, and the corresponding statement for the cluster growth process. A union bound now shows that with probability at least $1-\delta$ all of the corresponding explosion and burning times in both $C^{(1)}$ and $C^{(2)}$ differ by less than $\epsilon$.

 Now the modified age processes can be read off as above from the initial ages and the cluster size processes.
 We find that with probability at least $1-8\delta$ we have for both $i=1$ and $i=2$ that $|a_t^{(i)} - \bar a_t^n(\rho_i^n)| < \epsilon$ throughout $[0,t_{\max}]$ except at times when exactly one of $C_t^{(i)}$ and $C_t^n(\rho_i^n)$ is at least $K$ and the other is equal to $1$. During those times we have $0 \le a_t^{(i)}/C_t^{(i)} < \epsilon$ and $0 \le \bar a_t^n(\rho_i^n)/C_t^n(\rho_i^n) < \epsilon$.

The proof of Proposition \ref{2rhocvg} follows using \eqref{eq:no_lightning_singletons} to replace $\bar a^n$ by $a^n$, with an asymptotically negligible probability of failure.
\end{proof}

\begin{proof}[Proof of Corollary~\ref{Cor: pointwise joint convergence}]
Let us fix  $0< \epsilon < t_{\max}^{-2}$.
Suppose that for $i=1,2$ we have  $$a_0^n(\rho_i^n) < \epsilon^{-1/2}, \quad a_0^{(i)} < \epsilon^{-1/2}\,,$$$$d_E\left(c_t^n(\rho_i^n),C_t^{(i)}\right) \le \epsilon, \quad \text{and} \quad \left|\frac{a_t^n(\rho_i^n)}{C_t^n(\rho_i^n)} - \frac{a_t^{(i)}}{C_t^{(i)}}\right| \le \epsilon\,.$$ (This happens with high probability if $\epsilon$ is small and then $n = n(\epsilon)$ is sufficiently large.) A calculation using the definition of the metric $d_E$ shows that one of the following three options must hold:
\begin{enumerate}\item $C_t^n(\rho_i^n) = C_t^{(i)} \le 1/\sqrt{\epsilon}$, in which case $\left|a_t^n(\rho_i^n) - a_t^{(i)}\right| \le \sqrt{\epsilon}$, or
\item $C_t^{(i)} \ge 1/\sqrt{\epsilon}$, or
\item $C_t^{(i)} = 1$ and $C_t^n(\rho_i^n) > 1/\epsilon$, in which case $$\frac{a_t^n(\rho_i^n)}{C_t^n(\rho_i^n) } < \epsilon(a_0^n(\rho_i^n) + t) \le 2\epsilon^{1/2}\,,$$ so  $a_t^{(i)} \le \epsilon + 2\epsilon^{1/2}$.
\end{enumerate}
For any fixed time $t > 0$, the probability that $C_t^{(i)}> 1/\sqrt{\epsilon}$ tends to $0$ as $\epsilon \to 0$ by Propositions \ref{prop_cft_cluster_marginal} and \ref{prop_Smol_uniqueness_simplified}, thus the probability of option 2 above is small.

The probability that $C^{(i)}$ has an explosion in the time interval $[t - \epsilon - 2\epsilon^{1/2},t]$ also tends to $0$ as $\epsilon \to 0$,
 because the expected number of explosions of $C_t$ in any interval $[a,b]$ is $\int_a^b \varphi(s)\,ds$ (see \cite[\S 3.4]{CraneFreemanToth}) and $\varphi(s)$ is bounded on $[0,t_{\max}]$ (see Proposition \ref{prop_Smol_uniqueness_simplified}), thus the probability of option 3 above is also small.

 It follows that option 1 above occurs with asymptotically high probability, completing the proof of Corollary~\ref{Cor: pointwise joint convergence}.
\end{proof}

\subsection{Convergence of empirical age evolution}
\label{subsection_conc_emp_age_distr}

The goal of \S\ref{subsection_conc_emp_age_distr} is to prove Theorem \ref{thm_convergence}.
We will prove that for any fixed $t \in [0,\infty)$, we have
 \begin{equation}\label{fixed_t_pi_convergence}
 \pi^n_t \stackrel{\mathbb{P}}{\Rightarrow} \pi_t,
 \end{equation}
 where $\pi_t$ is the distribution of $a_t$ (c.f.\ Definition \ref{def:cluster_growth_decorated_with_ages}).
By Definition \ref{def_conv_of_measures_simple},  we only need to check that for any
 bounded continuous function $f: [0,\infty) \to \mathbb{R}$ we have
\begin{align} \label{first_moment_pi_n_conv_to_pi}
\lim_{n \to \infty} \mathbb{E}\left( \int f(s)\, \mathrm{d} \pi^n_t(s) \right) & =  \int f(s)\, \mathrm{d} \pi_t(s),\\
\label{var_pi_n_conv_to_zero}
\lim_{n \to \infty} \mathrm{Var}\left( \int f(s)\, \mathrm{d} \pi^n_t(s) \right) & =  0.
\end{align}
First we show \eqref{first_moment_pi_n_conv_to_pi}.
 Corollary \ref{Cor: pointwise joint convergence}  gives $a^n_t(\rho^n_1)\Rightarrow  \pi_t$. This immediately gives
\eqref{first_moment_pi_n_conv_to_pi}, since
$\mathbb{E}\left( \int f(s)\, \mathrm{d} \pi^n_t(s) \right)=\mathbb{E}\left(f(a^n_t(\rho^n_1))\right)$.

Corollary \ref{Cor: pointwise joint convergence} also implies
\begin{multline}\label{second_mom_inged}
\lim_{n \to \infty } \mathbb{E}\left[ \left(\int f(s)\, \mathrm{d} \pi^n_t(s)\right)^2 \right]=
\lim_{n \to \infty}
\mathbb{E}\left[ f(a^n_t(\rho^n_1))f(a^n_t(\rho^n_2))  \right]=
 \\
\mathbb{E}\left[ f(a^{(1)}_t)f(a^{(2)}_t)  \right]=  \left( \int f(s)\, \mathrm{d} \pi_t(s) \right)^2.
\end{multline}
The required variance statement \eqref{var_pi_n_conv_to_zero} follows from \eqref{first_moment_pi_n_conv_to_pi} and \eqref{second_mom_inged}. This completes the proof of \eqref{fixed_t_pi_convergence}. The proof of Theorem \ref{thm_convergence} is complete.


\subsection{The local weak limit of MFFFA at time $t$}
\label{subsection_proof_of_thm_local_limit_mfff}

The goal of \S\ref{subsection_proof_of_thm_local_limit_mfff} is to prove  Theorem \ref{thm_local_limit_mfff}.

We begin by proving \eqref{local_weak_limit_in_ffm}.
 We have
$\pi^n_t \stackrel{\mathbb{P}}{\Rightarrow} \pi_t$ by Theorem \ref{thm_convergence}.
 Moreover by Theorem \ref{thm_mfff_inhomog},
the $\mathrm{MFFFA}(n,\underline a^n_0,\lambda)$  graph $\mathcal{G}^n_t$ is an age-inhomogeneous random graph
$\Gage(n,\underline a^n_t)$.
 We may thus use Proposition \ref{prop:localweakconv} to conclude that the graph
 $\mathcal{G}^n_t$ converges in probability in the local weak sense to the age-driven multitype branching process tree $T^{\pi_t}$ as $n\rightarrow\infty$.
 This proves \eqref{local_weak_limit_in_ffm}.

Next we prove \eqref{v_k_t_using_pi_t}. Recall from \eqref{eq:defnvn} that
$v^n_k(t)$ is the proportion of vertices of $\mathcal{G}^n_t$ in size $k$ components at time $t$.
 On the one hand, \eqref{local_weak_limit_in_ffm} implies that  $v^n_k(t)\stackrel{\mathbb{P}}{\rightarrow} \mathbb{P}(|T^{\pi_t}|=k)$ as $n \to \infty$
  for any $k \geq 1$ and  $t \geq 0$. On the other hand,  Propositions \ref{prop_Smol_uniqueness_simplified} and \ref{prop_RathToth_conv}  imply that $v^n_k(t)\stackrel{\mathbb{P}}{\rightarrow} v_k(t)$ as $n \to \infty$
  where $\underline{v}(\cdot)$ is the unique solution to the critical forest fire equations (\eqref{smol_ff_eq}+\eqref{smol_bc}) with initial state $\underline v(0)$ given by $v_k(0)=\mathbb{P}(|T^{\pi_0}|=k)$. This proves  \eqref{v_k_t_using_pi_t}.

It remains to  prove \eqref{age_crit_after_gel}.  By Proposition \ref{prop_Smol_uniqueness_simplified} we have $\sum_{k=1}^\infty v_k(t)=1$,
$$\sum_{k\ge 1}kv_k(t) <\infty,\; t<t_{\mathrm{gel}},\quad \text{and} \quad \sum_{k\ge 1}kv_k(t)=\infty,\;t\ge t_{\mathrm{gel}}.$$
By \eqref{v_k_t_using_pi_t} we have $\mathbb{P}(|T^{\pi}|<\infty )= \sum_{k=1}^\infty v_k(t)$ and $\mathbb{E}(|T^{\pi}|)=\sum_{k=1}^\infty kv_k(t)$,
thus \eqref{age_crit_after_gel} follows by Proposition \ref{prop_multitype_trichotomy}.
The proof of Theorem \ref{thm_local_limit_mfff} is complete.


\subsection{$(C_t)$ and $(a_t)$ are intertwined}
\label{subsection_intertwine_C_a_cgp}
In
\S \ref{subsection_intertwine_C_a_cgp} we prove Lemma \ref{ConditionalCt}.

Theorem \ref{thm_convergence} gives $\pi^n_t \stackrel{ \mathbb{P} }{\Rightarrow} \pi_t$, and so we may apply Proposition \ref{2agecvgprop} to $\mathcal{G}^n_t$, to obtain
$$\left(a_t^n(\rho^n), C^n_t(\rho^n)\right) \;\Rightarrow \; \left( a(\rho),\, |T^{\pi_t}|\right),$$
where $\rho^n$ is uniformly distributed on $[n]$ and $a(\rho)$ is the age of the root in the age-labelled tree $T^{\pi_t}$. However, Corollary \ref{Cor: pointwise joint convergence} gives
$$\left(a_t^n(\rho^n),C^n_t(\rho^n)\right)\;\Rightarrow\; (a_t,C_t),$$
and so we know that conditional on $a_t$, we have $C_t\stackrel{d}=|T^{\pi_t}_{a_t}|$.

It remains to show that conditional on $a_t$, $C_t$ is independent of $(a_s)_{0\le s<t}$.

Recall that $0 < \tau_1 < \tau_2 < \dots$ are the consecutive explosion times of $C$ and $\tau_0 = -a_0$.  Recall from \eqref{a_t_L_t_def} that we denote by
 $L_t:= \max\{\, \ell \geq 0 \, : \, \tau_\ell \leq t \, \}$  the index of the last explosion time $\tau_{L_t}$ before $t$.

Assume that we are given the value of $a_t$. If $a_t \geq t$ then we know that $C$ did not explode on $[0,t]$, thus for all $s \in [0,t]$ we have $a_s= a_t -(t-s)$.
 So in this case, conditional on the value of $a_t$, the cluster size $C_t$ is independent of $(a_s)_{0\le s\le t}$, since the latter is fully determined by the value of $a_t$.

Now assume that we are given the value $a_t$, and we have $ a_t \in [0,t)$.  This is equivalent to assuming that $\tau_{L_t}=t-a_t$.
Under this assumption, one has $a_s=s-\tau_{L_t}$ for any $s \in [\tau_{L_t},t] $, therefore we only need to show that conditional on $\tau_{L_t}$, $C_t$ is independent of $(a_s)_{0\le s<\tau_{L_t}}$.

Note that $(a_s)_{0\le s<\tau_{L_t}}$ is determined by $a_0$ and $(C_s)_{0\le s<\tau_{L_t}}$ via \eqref{a_t_L_t_def}, so it is enough to show that
that conditional on $\tau_{L_t}$, $(C_s)_{\tau_{L_t} \leq s } $ is independent of $ \left( a_0, (C_s)_{0\le s<\tau_{L_t}} \right)$.
This statement follows from the construction of the cluster growth process with age given in \S \ref{subsection_construction_of_cgp}:
$(C_s)_{\tau_{L_t} \leq s } $ is constructed via \eqref{constr_first}-\eqref{constr_third} using the value of $J_{L_t,0}$ (i.e., $\tau_{L_t}$)
and the arrays
$(\epsilon_{\ell,m})_{ L_t \leq \ell , 1 \leq m}$ and $(U_{\ell,m})_{ L_t \leq \ell , 1 \leq m}$, and the joint distribution of these
arrays does not depend on $ \left( a_0, (C_s)_{0\le s<\tau_{L_t}} \right)$ (in particular, it does not depend on the number $L_t$ of explosions in $[0,t]$).
 The proof of Lemma \ref{ConditionalCt} is complete.

\subsection{The age process is Markov}
\label{subsection_age_is}
The goal of \S \ref{subsection_age_is} is to prove Lemma \ref{lemma:age_indeed_evolves_as_Markov} and also the formula \eqref{eq:phiexpression}.

The processes $(a_t)$ and $(A_t)$ have the same initial distribution, and
 as long as $t \in [0, t_{\mathrm{gel}})$,   both processes simply increase deterministically at speed $1$. Indeed, for $(A_t)$, this follows from
Definition \ref{def:A_t_age_Markov_proc} and our assumption that $\varphi(t)=0$ if $t \in [0, t_{\mathrm{gel}})$.
On the other hand, the cluster growth process $(C_t)$ does not explode in $[0,t_{\mathrm{gel}})$
(see \cite[Lemma 3.11]{CraneFreemanToth}), thus by  Definition \ref{def:cluster_growth_decorated_with_ages} we have $a_t=a_0+t$ for any $t <  t_{\mathrm{gel}}$.

Thus w.l.o.g.\ we may assume $t_{\mathrm{gel}}=0$ for the rest of \S\ref{subsection_age_is} (c.f.\ Remark \ref{remark_about_v_k_t_pi_t}\eqref{remark_assumtion_holds_at_time_t}).
 Under this assumption $\pi_t$ is age-critical for
any $t \geq 0$, by Theorem \ref{thm_local_limit_mfff}\eqref{age_crit_after_gel}.
 Moreover, the eigenfunction $\theta_t(\cdot)$ of $\mathcal{L}_{\pi_t}$ corresponding to the eigenvalue $\lambda=1$ is well-defined for all $t \geq 0$ by Lemma \ref{L: operator properties}\eqref{L:eigen}.

Our next step in the proof of Lemma \ref{lemma:age_indeed_evolves_as_Markov} is to show \eqref{eq:phiexpression}, that $\varphi(t)=(\int \theta_t(s)^3\, \mathrm{d}\pi_t(s))^{-1}$ when $t\ge t_{\mathrm{gel}}$.

\begin{proof}[Proof of \eqref{eq:phiexpression}]
Equation \eqref{f_T_expansion} of Lemma \ref{lemma_tree_s_gen_fn_asymp} implies
\begin{equation}\label{gen_fn_phi_t_pi_t_asymp}
\lim_{\varepsilon \to 0_+} \frac{1}{\sqrt{\varepsilon}} \mathbb{E}\left( 1 - (1-\varepsilon)^{|T^{\pi_t}|} \right)=\sqrt{2\phi_t},
\end{equation}
where, following \eqref{phi_theta_cube_int_def}, we take $\phi_t=1/\int \theta_t(s)^3\, \mathrm{d}\pi_t(s)$.
Let us compare this with \eqref{v_k_polynomial_decay}, which is equivalent to
\begin{equation}\label{gen_fn_phi_t_asymp}
\lim_{\varepsilon \to 0_+} \frac{1}{\sqrt{\varepsilon}} \left( 1 - \sum_{k=1}^{\infty}v_k(t)(1-\varepsilon)^k \right)=\sqrt{2\varphi(t)}
\end{equation}
by Example (c) of Theorem 4, chapter XIII.5 of \cite{Feller}.

By Theorem \ref{thm_local_limit_mfff}\eqref{v_k_t_using_pi_t} we have
\begin{equation}
\sum_{k=1}^{\infty}v_k(t)(1-\varepsilon)^k=
\mathbb{E}\left( (1-\varepsilon)^{|T^{\pi_t}|} \right), \qquad \varepsilon \in (0,1),
\end{equation}
 whence $\phi_t=\varphi(t)$, which proves
\eqref{eq:phiexpression}.
\end{proof}

\begin{proof}[Proof of Lemma \ref{lemma:age_indeed_evolves_as_Markov}] We now assume $t_{\mathrm{gel}}=0$.
The initial distribution of $a_0$ agrees with the initial distribution of $A_0$.
Both $a$ and $A$ increase deterministically at speed $1$ and occasionally jump back to zero.
Recalling the definition of $L_t$ from \eqref{a_t_L_t_def}, it follows that
$\tau_{L_t + 1}$ is the time of the first explosion after $t$. We
 only need to show
\begin{equation}\label{want_jump_rate}
\lim_{ h \to 0_+ } \frac{1}{h}\mathbb{P}\left(  \tau_{L_t + 1} \in [t,t+h] \, \big| \, (a(s))_{0 \leq s \leq t}  \right)=
\varphi(t)\theta_t(a(t))
\end{equation}
in order to conclude the proof of Lemma \ref{lemma:age_indeed_evolves_as_Markov}.
Let us define
\begin{equation}\label{eq: psi is a survival probability}
\psi_{t+h}(t):=\mathbb{P}\left( \tau_{L_t + 1}>t+h  \, \big| \, C_t=1 \right).
\end{equation}
(We explain the history of this notation in Remark \ref{remark_char_curves}).

Let us make the following observation, which follows from the branching structure underlying Definition \ref{def:cluster_growth_decorated_with_ages}. The cluster growth process started at time $t$ from the state
$C_t=k$ evolves until the first explosion according to the same dynamics as the sum of $k$ i.i.d.\
copies of the cluster growth process started at time $t$ from the state
$C_t=1$, where the explosion time of the sum is the minimum of the $k$ i.i.d. explosion times for its summands. From this observation we conclude
\begin{equation}\label{k_indep_copies_of_C}
\mathbb{P}\left( \tau_{L_t + 1}>t+h  \, \big| \, C_t=k \right)=\psi_{t+h}(t)^k,
\end{equation}
therefore by Lemma \ref{ConditionalCt} we have
\begin{equation}\label{trick_blabla}
\mathbb{P}\left(  \tau_{L_t + 1} \in [t,t+h] \, \big| \, (a(s))_{0 \leq s \leq t}  \right)=
1- \mathbb{E}\left( \,  \psi_{t+h}(t)^{|T^{\pi_t}_{a_t}|} \big| \, a_t \right).
\end{equation}
Thus in order to prove \eqref{want_jump_rate}, we only need to show
\begin{equation}\label{need_s_theta_gen_fn_asymp}
\lim_{h \to 0_+}\frac{1}{h}\left(  1- \mathbb{E}\left( \,  \psi_{t+h}(t)^{|T^{\pi_t}_{s}|} \right) \right)=
\varphi(t)\theta_t(s), \qquad s \geq 0.
\end{equation}

First we observe that
\begin{multline}\label{phi_explode_pi_t_gen_fn}
\varphi(t)\stackrel{\eqref{expected_number_of_explosions_and_phi} }{=}\lim_{ h \to 0_+ } \frac{1}{h}\mathbb{P}\left(  \tau_{L_t + 1} \in [t,t+h] \right)
\stackrel{ \eqref{eq_clgp_marginal_v_k_t}, \eqref{k_indep_copies_of_C} }{=} \\
 \lim_{ h \to 0_+ } \frac{1}{h} \left(1- \sum_{k=1}^{\infty} \psi_{t+h}(t)^k v_k(t)  \right)
 \stackrel{ \eqref{eq_v_k_t_using_pi_t} }{=}  \lim_{h \to 0_+}\frac{1}{h}\left(  1- \mathbb{E}\left( \,  \psi_{t+h}(t)^{|T^{\pi_t}|} \right) \right).
\end{multline}
 If we denote $\varepsilon(t,h)=1-\psi_{t+h}(t)$, then
\begin{multline}\label{intermediate_double_asymp}
\varphi(t) h + o(h)
\stackrel{ \eqref{phi_explode_pi_t_gen_fn} }{=}
 1- \mathbb{E}\left( \,  (1-\varepsilon(t,h))^{|T^{\pi_t}|} \right) \stackrel{ \eqref{eq:phiexpression}, \eqref{gen_fn_phi_t_pi_t_asymp} }{=} \\
 \sqrt{2 \varphi(t)} \sqrt{\varepsilon(t,h)} +
 o(\sqrt{\varepsilon(t,h)}), \qquad h \to 0_+\, .
\end{multline}
Taking the squares of both sides of \eqref{intermediate_double_asymp} and rearranging the terms, we obtain\\
$\frac{\varphi(t)}{2}h^2 +o(h^2)=\varepsilon(t,h) + o(\varepsilon(t,h))$, i.e.
\begin{equation}\label{short_char_curve}
\psi_{t+h}(t) = 1-\frac{\varphi(t)}{2}h^2 +o(h^2), \qquad h \to 0_+ \, .
\end{equation}
Equation \eqref{f_s_expansion} of Lemma \ref{lemma_tree_s_gen_fn_asymp} together with $\phi_t=\varphi(t)$ (i.e.,
\eqref{eq:phiexpression}) imply
\begin{equation}\label{gen_fn_s_phi_theta_asymp}
  \mathbb{E}\left(  (1-\varepsilon)^{|T^{\pi_t}_s|} \right)=
  1-\sqrt{\varepsilon}\sqrt{2\varphi(t)} \theta_t(s)+o(\sqrt{\varepsilon}) \qquad \varepsilon \to 0_+.
\end{equation}
Putting the above formulas together we obtain \eqref{need_s_theta_gen_fn_asymp}:
\begin{multline*}
 1- \mathbb{E}\left( \,  \psi_{t+h}(t)^{|T^{\pi_t}_{s}|} \right)\stackrel{ \eqref{short_char_curve} }{=}
1-\mathbb{E}\left( \,  \left(1- (\frac{\varphi(t)}{2}+o(1))h^2 \right)^{|T^{\pi_t}_{s}|} \right)\stackrel{ \eqref{gen_fn_s_phi_theta_asymp} }{=}\\
  \sqrt{\left(\frac{\varphi(t)}{2}+o(1)\right)h^2}\, \sqrt{2\varphi(t)}\, \theta_t(s)+o(h)=\varphi(t) \theta_t(s) h +o(h), \quad h \to 0_+ \, .
\end{multline*}
This completes the proof of Lemma \ref{lemma:age_indeed_evolves_as_Markov}.
\end{proof}

\begin{remark}\label{remark_char_curves}
For any $y > t_{\mathrm{gel}}$, the characteristic curve $\psi_{y}(t), t \in [0, y]$ is defined in \cite[Lemma 3.5]{CraneFreemanToth} to be the unique solution of the ODE
\begin{equation}
\psi_{y}(y)=1, \quad  \frac{\mathrm{d}}{\mathrm{d}t} \psi_y(t)=\psi_y(t)(1-X_t(\psi_y(t))),
\quad X_t(z):=\sum_{k=1}^{\infty} z^k v_k(t)
\end{equation}
for which $\psi_{y}(t)<1$ for all $t \in [0, y)$.
\\This definition and our definition \eqref{eq: psi is a survival probability} are equivalent by \cite[(3.16)]{CraneFreemanToth}.
\end{remark}

\subsection{Age differential equations}
\label{subsection_proof_of_age_pde}
The goal of \S\ref{subsection_proof_of_age_pde} is to prove Theorem~\ref{thm_age_pde}.

Let us first note that for any $t \in [0, t_{\mathrm{gel}})$ we have
$\frac{\partial}{\partial t} \int f(s)\mathrm{d}\pi_t(s) = \int f'(s)\mathrm{d}\pi_t(s)$, since
 $\pi_t$ is the distribution of the age $a_t$ by Definition \ref{def:cluster_growth_decorated_with_ages},
 and we have already seen at the beginning of \S \ref{subsection_age_is} that $a_t=a_0+t$ for any
 $t \in [0, t_{\mathrm{gel}})$.
  This proves \eqref{eq:ageDE}
for $t < t_{\mathrm{gel}}$. We can thus assume $t > t_{\mathrm{gel}} $ w.l.o.g.\ for the rest of \S \ref{subsection_proof_of_age_pde}.

Denote by $\widetilde{\pi}_t$ the distribution of $A_t$, c.f.\ Definition \ref{def:A_t_age_Markov_proc}.
Integrating \eqref{inf_gen_A_t} with respect to $\widetilde{\pi}_t$  we obtain the weak formulation of the Kolmogorov forward equation of $A$:
\begin{multline*}
\frac{\partial}{\partial t} \int f(s)\mathrm{d}\widetilde{\pi}_t(s) = \\
 \int f'(s)\mathrm{d}\widetilde{\pi}_t(s) - \varphi(t)\int f(s)\theta_t(s)\mathrm{d}\widetilde{\pi}_t(s) + \varphi(t) f(0) \int  \theta_t(s)\, \mathrm{d}\widetilde{\pi}_t(s)\, .
\end{multline*}

By Lemma \ref{lemma:age_indeed_evolves_as_Markov}, the marginal distributions of $a_t$ and $A_t$ agree, i.e.,
we have $\widetilde{\pi}_t=\pi_t$. Plugging this identity in the above equation and using
$\int \theta_t(s)\, \mathrm{d} \pi_t(s)=1$
we obtain the desired \eqref{eq:ageDE} for $t> t_{\mathrm{gel}}$. Noting that we have already proved \eqref{eq:phiexpression} in Section \ref{subsection_age_is},
the proof of  Theorem \ref{thm_age_pde} is complete.

\section*{Acknowledgements}

The work of E.C.\ is supported by the Heilbronn Institute for Mathematical Research.

The work of B.R.\ is partially supported by the Postdoctoral Fellowship NKFI-PD-121165 and grant NKFI-FK-123962 of NKFI (National Research, Development and Innovation Office), the Bolyai Research Scholarship of the Hungarian Academy of Sciences, the \'UNKP-18-4-BME-124 New National Excellence Program of the Ministry of Human Capacities and the ERC Synergy under Grant No. 810115 - DYNASNET

The work of D.Y.\ is supported by EPSRC doctoral training grant \newline EP/K503113, the German-Israeli Foundation of Scientific Research and Development Grant I-2494-304.6/2017, by ISF grant 1325/14, and by the Joan and Reginald Coleman--Cohen Fund.

The authors would like to thank both anonymous referees for their numerous constructive comments that improved the
quality of this paper.

\end{document}